\newcommand{\sinc}{\mathrm{sinc}}
\newtheorem{theorem}{Theorem}[section]
\newtheorem{lemma}[theorem]{Lemma}
\theoremstyle{definition}
\newtheorem{definition}[theorem]{Definition}
\newtheorem{prop}[theorem]{Proposition}
\theoremstyle{remark}
\newtheorem{remark}[theorem]{Remark}
\numberwithin{equation}{section}
\begin{document}
	
	\title{Paley-Wiener Theorems For Slice Regular Functions
	}
	
	
	\author{Yanshuai Hao}
	\address{School of Computer Science and Engineering, Faculty of Innovation Engineering, Macau University of Science and Technology}
	\curraddr{}
	\email{3220004726@student.must.edu.mo}
	\thanks{}
	
	\author{Pei Dang}
	\address{Department of Engneering Science, Faculty of Innovation Engineering, Macau University of Science and Technology}
	\curraddr{}
	\email{pdang@must.edu.mo}
	\thanks{}
	
	\author{Weixiong Mai$^{\star}$}
	\address{School of Computer Science and Engineering, Faculty of Innovation Engineering, Macau University of Science and Technology}
	\curraddr{}
	\email{wxmai@must.edu.mo}
	\thanks{$^{\star}$Corresponding author}
	
	
	\keywords{Paley-Wiener theorem, Slice regular functions, Sampling theorem, One-dimensional quaternion Fourier transform}
	
	\date{}
	
	\dedicatory{}
	
	\begin{abstract}
		We prove two theorems of Paley and Wiener in the slice regular setting. As an application, we can compute the reproducing kernel for the slice regular Paley-Wiener space, and obtain a related sampling theorem.
	\end{abstract}
	
	\maketitle
	
	\section{Introduction}
	In complex analysis, the classical Paley-Wiener theorem for holomorphic functions usually has two different forms (see \cite{paley1934fourier}). One is about entire functions of exponential type, and the other is about the Hardy $H^2$ space on the upper half-plane. The first one states that an entire function $f$ satisfies $f\left(z\right)=O\left(e^{A|z|}\right)$ with $f|_\mathbb R \in L^2\left(\mathbb{R}\right)$ if and only if $\mathscr{F}\left(f|_{\mathbb{R}}\right)$ is supported in $[-A,A]$, where $f|_{\mathbb{R}}$ is the restriction of $f$ on $\mathbb R$, and  $\mathscr{F}\left(f|_{\mathbb{R}}\right)$ is the Fourier transform of $f|_{\mathbb{R}}$. The other one states that if $f$ is a function in $L^2(\mathbb R)$, then $f$ is the non-tangential boundary limit (NTBL) of some function in the Hardy $H^2$ space  on the upper half-plane if and only if the support of $\mathscr{F}\left(f\right)$ lies on $[0,\infty).$  For simplicity, in this paper we call them the compact type and the non-compact type Paley-Wiener theorems, respectively. 
	
	Due to its importance to the study of function theory in complex analysis, many studies about Paley-Wiener theorem are given in the literature. For instance, there exist analogous results of Paley-Wiener theorem in $L^p(\mathbb R)$. For the non-compact case, one has that for $f\in L^p(\mathbb R)$, $1\leq p\leq\infty$, $f$ is the NTBL of some function in the Hardy $H^p$ space on the upper half-plane if and only if $\left(\mathscr{F}\left(f\right),\varphi\right)=0$ for $\varphi$ in Schwartz space with $\operatorname{supp}\varphi\subset(-\infty,0]$ (see \cite{qian2005characterization,qian2009fourier}). For the compact case, in \cite{plancherel1937fonctions} it was proved that if an entire function $f$ satisfies $f\left(z\right)=O\left(e^{A|z|}\right)$ and $f|_{\mathbb{R}}\in L^{p}(\mathbb R)$ for $1<p\leq 2$, then $f\left(z\right)=\int_{-A}^{A}e^{izt}\mathscr{F}\left(f|_{\mathbb{R}}\right)\left(t\right)\mathrm{d}t$ with $\mathscr{F}\left(f|_{\mathbb{R}}\right)\in L^{\frac{p}{p-1}}(-A,A)$, and conversely, in \cite{boas1938representations} it was shown that if $f\left(z\right)=\int_{-A}^{A}e^{izt}\mathscr{F}\left(f|_{\mathbb{R}}\right)\left(t\right)\mathrm{d}t$ with $\mathscr{F}\left(f|_{\mathbb{R}}\right)\in L^{p}(-A,A)$, then $f$ is an entire function satisfying $f\left(z\right)=O\left(e^{A|z|}\right)$ and $f|_{\mathbb{R}}\in L^{\frac{p}{p-1}}(\mathbb R)$. 
	
	Besides, the study of Paley-Wiener theorem is also considered in several variables. In the several complex variables setting, the non-compact type Paley-Wiener theorem states that $f$ is the NTBL of some function in  the Hardy spaces $H^2\left(T_\Gamma\right)$ on tubes $T_\Gamma$ over regular cones $\Gamma$ if and only if $\operatorname{supp}\mathscr{F}\left(f\right)\subset\Gamma^{*}$, where $\Gamma^{*}$ is the dual cone of $\Gamma$ (see e.g. \cite{stein1971introduction}). In recent years, the analogous results are also obtained in $H^p(T_\Gamma)$ for $1\leq p\leq \infty$ and $0<p<1$, respectively (see \cite{li2018fourier} and \cite{deng_fourier_2019}), while a weighted version of the Paley-Wiener type theorem in a tubular domain over a regular cone is investigated in \cite{deng2019paley}. As an analogue of the compact case, the Paley-Wiener theorem is generalized to entire functions of several complex variables with the exponential type bounds with respect to the setting of symmetric body (see e.g. \cite{stein1971introduction}). In the Clifford algebra setting, the compact type Paley-Wiener theorem is obtained in \cite{kou2002paley}. 
	The non-compact type Paley-Wiener theorem is respectively studied in \cite{bernstein1998paley} and \cite{gilbert1991clifford} for $p=2,$ while for $1\leq p\leq \infty,$ a systematical investigation of Paley-Wiener theorem is given in \cite{dang2020fourier}. Furthermore, the Paley-Wiener type theorems have been extensively studied including generalizations in the distribution sense, analogous results in Bergman spaces and Dirichlet spaces (see e.g. \cite{GENCHEV1986496,bernstein1998paley,duren2007paley,qian2005characterization,gilbert1991clifford,qian2009fourier,li2018fourier,dang2020fourier,os2001generalized,hormander2015analysis,schwartz_transformation_1952}).
	
	In the past two decades, the slice regular function theory as a new analytic function theory has been developed well in high dimensional space, which is different from the settings of holomorphic functions in several complex variables and monogenic functions in Clifford algebras. It is noted that a lot of fundamental properties and theorems of analytic functions in complex analysis have been generalized to slice regular functions (see e.g. \cite{colombo2016entire,gentili2009open,gentili2012power,gentili2011phragmen,gentili2007new,colombo2010cauchy} and the references therein). It is well-known that the classical Paley-Wiener theorem plays an important role in complex analysis.  So it would be interesting and significant to consider this theorem for slice regular functions. To the authors' best knowledge, a generalization of Paley-Wiener theorem is not fully obtained in such setting. In \cite{LiangPaley2013}, the authors considered a kind of compact type Paley-Wiener theorem in a particular slice with respect to linear canonical transform, which is fairly obtained from the arguments in complex analysis (see e.g. \cite{rudin1987real}). The problem is that their result can not be extended to the whole space. 
	
	In this paper we will prove the compact and non-compact types of Paley-Wiener theorem for slice regular functions. To state the main results, we introduce some notations. Denote by $\mathbb H$ the quaternions space, by $\mathscr R_l(U)$ the class of left slice regular functions on the open set $U\subset \mathbb H,$ by $\mathbb H_+$ the open right half-space of $\mathbb H,$  by $H^p(\mathbb H_+)$ the Hardy space on $\mathbb H_+,$ by $\mathscr{F}_{I}$ the one-dimensional left-sided quaternion Fourier transform (see Definition \ref{def-1DQFT}), by $\mathscr{F}_{E}$ the essential Fourier transform (see Definition \ref{essen_fourier}), and by $\mathbb{S}$ the unit sphere of purely imaginary quaternions. 
	
	The compact type Paley-Wiener theorem is stated as follows. The main trick to prove this theorem is to use a decomposition of $\mathscr R_l(U)$ and extension of slice functions (see Proposition \ref{prop-SRE} and Proposition \ref{prop-NE}).
	
	\smallskip
	
	\noindent \textbf{Main Result I: Compact type.}
	{\it
		Let $f:\mathbb{H}\to\mathbb{H}$ be a left slice regular function with $f|_\mathbb{R}\in L^2(\mathbb{R})$, i.e.,
		\begin{equation*}
			\int_{-\infty}^{+\infty}|f(x)|^2\mathrm{d}x<\infty,
		\end{equation*}
		and $A>0$ be a positive number. Then the following two conditions are equivalent:
		\begin{enumerate}
			\item[(i)] There exists a constant $C$ such that 
			\begin{equation*}
				|f(\mathbf{q})|\leq Ce^{A|\mathbf{q}|}
			\end{equation*}
			for all $\mathbf{q}\in\mathbb{H}$.
			\item[(ii)] $\operatorname{supp}\mathscr{F}_{I}\left(f|_{\mathbb{R}}\right)\subset\left[-A,A\right]$ for all $I\in\mathbb{S}$.
		\end{enumerate}
		Moreover, if one of the above conditions holds, then we have 
		\begin{equation*}
			f(x+Iy)=\frac{1}{\sqrt{2\pi}}\int_{-A}^{A}e^{I\left(x+Iy\right)t}\mathscr{F}_{I}\left(f|_{\mathbb{R}}\right)(t)\mathrm{d}t
		\end{equation*}for all $I\in\mathbb{S}$ and $x,y\in\mathbb{R}$.
	}
	
	\smallskip 
	\smallskip
	
	For the case of non-compact type, we can prove the following result by a similar trick. 
	
	\noindent\textbf{Main Result II: Necessary condition for non-compact type.}
	{\it
		Let $f\in H^{p}\left(\mathbb{H}_{+}\right)$, $1\leq p\leq2$. Then $\operatorname{supp}\mathscr{F}_{E}\left(f\right)\subset\left(-\infty,0\right]$. Moreover, for every $I\in\mathbb{S}$ and $x>0,y\in\mathbb{R}$,  
		\begin{equation*}
			f\left(x+Iy\right)=\frac{1}{\sqrt{2\pi}}\int_{-\infty}^{0}e^{\left(x+Iy\right)t}\mathscr{F}_{E}\left(f\right)\left(t\right)\mathrm{d}t.
		\end{equation*}
    }
	
	\noindent\textbf{Main Result II: Sufficient condition for non-compact type.}
	{\it
		Let $F:\mathbb{R}\to\mathbb{H}$ and  $F\in L^{p}(\mathbb{R})$ for $1\leq p\leq2$. Assume that there exists a quaternionic imaginary unit $I\in\mathbb{S}$ such that $\operatorname{supp}\mathscr{F}_{I}\left(F\right)\subset\left(-\infty,0\right]$. Then there exists a function $f\in H^p(\mathbb{H}_+)$, whose non tangential boundary boundary limit at $Iy$ is $F(y)$, whence $\mathscr{F}_{E}\left(f\right)=\mathscr{F}_{I}\left(F\right)$ and
        \begin{equation*}
			f\left(x+Jy\right):=\frac{1}{\sqrt{2\pi}}\int_{-\infty}^{0}e^{\left(x+Jy\right)t}\mathscr{F}_{I}\left(F\right)\left(t\right)\mathrm{d}t,
		\end{equation*}
        for all $x>0,y\in\mathbb{R}$, $J\in\mathbb{S}$. 

        }
	
	As an application of \textbf{Main Result I}, we obtain the reproducing kernel of slice regular Paley-Wiener space, and prove the related sampling theorem.

	\smallskip 
	\smallskip
	
	The paper is organized as follows. In \S 2 we introduce some notations for quaternions and slice regular functions. In \S 3 we prove the main results. In \S 4 we compute specifically the reproducing kernel of slice regular Paley-Wiener space and prove the related sampling theorem.   
	\section{Preliminaries}
	
	In this section we briefly introduce some notations and properties for slice regular functions. We refer to \cite{gentili2013regular,alpay2016slice,colombo2016entire} for more details.
	The quaternion space $\mathbb{H}$ is defined as
	\[\mathbb{H}=\left\{\mathbf{q}=x_0+\mathbf{i}x_1+\mathbf{j}x_2+\mathbf{k}x_3: x_0,x_1,x_2,x_3\in\mathbb{R}\right\},\]
	where $\mathbf{i}, \mathbf{j}$ and $ \mathbf{k}$ satisfy
	\[\mathbf{i}^2=\mathbf{j}^2=\mathbf{k}^2=-1,\mathbf{i}\mathbf{j}=-\mathbf{j}\mathbf{i}=\mathbf{k},\mathbf{j}\mathbf{k}=-\mathbf{k}\mathbf{j}=\mathbf{i},\mathbf{k}\mathbf{i}=-\mathbf{i}\mathbf{k}=\mathbf{j}.\]
	The conjugation of $\mathbf{q}\in \mathbb H$ is given by
	\[\overline{\mathbf{q}}=x_0-\mathbf{i}x_1-\mathbf{j}x_2-\mathbf{k}x_3,\]
	and the norm of $\mathbf{q}$ is 
	\[|\mathbf{q}|=\sqrt{\mathbf{q}\overline{\mathbf{q}}}=\sqrt{x_0^2+x_1^2+x_2^2+x_3^2}.\]
	The symbol $\mathbb{S}$ denotes the unit sphere of purely imaginary quaternion, i.e.,
	\[\mathbb{S}=\left\{\mathbf{q}=\mathbf{i}x_1+\mathbf{j}x_2+\mathbf{k}x_3: x_1^2+x_2^2+x_3^2=1,x_1,x_2,x_3\in\mathbb{R}\right\}.\]
	Note that if $I\in\mathbb{S}$, then $I^2=-1$. For any fixed $I\in\mathbb{S}$ we let
	\[\mathbb{C}_{I}:=\left\{x+Iy:x,y\in\mathbb{R}\right\}.\]
	One can verify that $\mathbb{C}_{I}$ can be identified with a complex plane, moreover $\mathbb{H}=\bigcup_{I\in\mathbb{S}}\mathbb{C}_{I}$. For every $\mathbf{q}\in\mathbb{H}$, we can write $\mathbf{q}=x_{0}+I_{\mathbf{q}}y_{0}$ where $x_{0}=\frac{\mathbf{q}+\overline{\mathbf{q}}}{2}$, $y_{0}=|\mathbf{q}-\overline{\mathbf{q}}|$ and $I_\mathbf{q}=\frac{\mathbf{q}-\overline{\mathbf{q}}}{|\mathbf{q}-\overline{\mathbf{q}}|}$. It is noted that $\mathbf{q}$ belongs to the complex plane $\mathbb{C}_{I_\mathbf{q}}$.

	\begin{definition}[\cite{colombo2016entire}]\label{def-SR}
		Let $\Omega$ be an open set in $\mathbb{H}$ and $f:\Omega\to\mathbb{H}$ be real differentiable. The function $f$ is said to be left slice regular if for every $I\in\mathbb{S}$, its restriction $f_I$ to the complex plane $\mathbb{C}_{I}$ satisfies
		\[\overline{\partial_I}f\left(x+Iy\right):=\frac{1}{2}\left(\frac{\partial}{\partial x}+I\frac{\partial}{\partial y}\right)f_I\left(x+Iy\right)=0\]
		on $\Omega\cap\mathbb{C}_{I}$. The class of left slice regular functions on $\Omega$ will be denoted by $\mathscr{R}_{l}(\Omega)$. Similarly, a function is said to be right slice regular in $\Omega$ if
		\begin{equation*}
			(f_I\overline{\partial_I})\left(x+Iy\right):=\frac{1}{2}\left(\frac{\partial}{\partial x}f_I\left(x+Iy\right)+\frac{\partial}{\partial y}f_I\left(x+Iy\right)I\right)=0
		\end{equation*}
		on $\Omega\cap\mathbb{C}_{I}$.
	\end{definition}
	\noindent In fact, we note that Definition \ref{def-SR} is a re-elaboration of a definition given by Gentili, Stoppato and Struppa (see \cite[Chapter 1]{gentili2013regular}).

	In this paper, we only discuss the case for left slice regular functions. Note that for the case of right slice regular functions we can have similar results.
	
		\begin{definition}[\cite{gentili2013regular}]
		    Let $\Omega$ be an open set in $\mathbb{H}$ and $f\in\mathscr{R}_{l}(\Omega)$. If $f$ satisfies $f(\Omega\cap\mathbb{C}_{I})\subseteq\mathbb{C}_{I}$ for all $I\in\mathbb{S}$, then $f$ is called left slice preserving. The class of left slice preserving functions on $\Omega$ will be denoted by $\mathscr{N}_{l}(\Omega)$.
        \end{definition}

    \begin{prop}[\cite{gentili2007new}]\label{prop-series}
        Let $B(0,R)$ be a ball centered at the origin and of radius $R$. A function $f\in\mathscr{R}_{l}(B(0,R))$ if, and only if, it has a series expansion of the form
        \begin{equation*}
            f(\mathbf{q})=\sum_{n=0}^{\infty}\mathbf{q}^n \frac{1}{n!}\frac{\partial^n f}{\partial x^n}(0)
        \end{equation*}
        converging on $B(0,R)$. In particular if $f\in\mathscr{R}_{l}(B(0,R))$ then it is $C^{\infty}$ on $B(0,R)$. 
    \end{prop}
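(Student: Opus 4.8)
The plan is to prove the two implications separately: the reverse one by a direct computation on quaternionic monomials, and the forward one by reducing to classical one-variable complex analysis via the splitting lemma.

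For the ``if'' direction, I would start from a series $f(\mathbf{q})=\sum_{n\geq 0}\mathbf{q}^n a_n$ with $a_n=\frac{1}{n!}\frac{\partial^n f}{\partial x^n}(0)$, assumed to converge on $B(0,R)$. Since the quaternionic norm is multiplicative, $|\mathbf{q}^n a_n|=|\mathbf{q}|^n|a_n|$, so a Cauchy--Hadamard-type estimate yields absolute and locally uniform convergence on $B(0,R)$, together with $\limsup_n|a_n|^{1/n}\leq 1/R$. I would then check that each monomial $\mathbf{q}\mapsto\mathbf{q}^n a_n$ is left slice regular: writing $\mathbf{q}=x+Iy$ and using that $x$ and $Iy$ commute inside $\mathbb{C}_I$, one has $\partial_x(x+Iy)^n=n(x+Iy)^{n-1}$ and $\partial_y(x+Iy)^n=n(x+Iy)^{n-1}I$, so $\overline{\partial_I}\big((x+Iy)^n\big)=\frac12\big(n(x+Iy)^{n-1}+I\,n(x+Iy)^{n-1}I\big)=\frac12\big(n(x+Iy)^{n-1}-n(x+Iy)^{n-1}\big)=0$, because $I^2=-1$ and $\mathbb{C}_I$ is commutative; right multiplication by the constant $a_n$ does not affect this. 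Locally uniform convergence of the series and of its formally differentiated series (which has the same radius, since $\limsup_n(n|a_n|)^{1/n}=\limsup_n|a_n|^{1/n}$) then licenses term-by-term differentiation, so $\overline{\partial_I}f=0$ on each slice and $f\in\mathscr{R}_{l}(B(0,R))$; the same argument shows $f$ is $C^{\infty}$, being a locally uniform limit of polynomials whose derivatives of every order also converge locally uniformly, which also disposes of the last assertion of the proposition.

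For the ``only if'' direction I would fix $I\in\mathbb{S}$, choose $J\in\mathbb{S}$ with $JI=-IJ$ so that $\{1,J\}$ is a right $\mathbb{C}_I$-basis of $\mathbb{H}$, and apply the splitting lemma: since $f_I$ is real differentiable on the disc $D:=B(0,R)\cap\mathbb{C}_I$ with $\overline{\partial_I}f_I=0$, writing $f_I=g+hJ$ with $g,h:D\to\mathbb{C}_I$ and using $\overline{\partial_I}(hJ)=(\overline{\partial_I}h)J$ together with the $\mathbb{C}_I$-linear independence of $\{1,J\}$ forces $\overline{\partial_I}g=\overline{\partial_I}h=0$, so $g$ and $h$ are ordinary holomorphic functions of one complex variable on $D$. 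Classical one-variable theory then supplies convergent Taylor expansions $g(z)=\sum_n z^n\frac{g^{(n)}(0)}{n!}$ and $h(z)=\sum_n z^n\frac{h^{(n)}(0)}{n!}$ on $D$, and recombining gives $f_I(x+Iy)=\sum_n(x+Iy)^n a_n^{(I)}$ with $a_n^{(I)}=\frac{1}{n!}\big(g^{(n)}(0)+h^{(n)}(0)J\big)$. It then remains to identify the coefficients and globalize: differentiating this last expansion in $x$ at $0$ shows $a_n^{(I)}=\frac{1}{n!}\frac{\partial^n f_I}{\partial x^n}(0)$, and since $0$ is real and the line through it in the $x$-direction lies in $\mathbb{R}\subset\mathbb{C}_I$ for every $I$, this quantity equals $\frac{1}{n!}\frac{\partial^n f}{\partial x^n}(0)$ and \emph{does not depend on $I$}; calling this common value $a_n$, the identity $f(\mathbf{q})=\sum_n\mathbf{q}^n a_n$ holds on $\mathbb{C}_I$ for each $I$, hence on $B(0,R)=\bigcup_{I\in\mathbb{S}}(B(0,R)\cap\mathbb{C}_I)$, and convergence on each slice disc gives $\limsup_n|a_n|^{1/n}\leq 1/R$, so the series converges on the whole ball.

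I expect the only genuine obstacle to be the splitting lemma $f_I=g+hJ$ with $g,h$ holomorphic, together with the observation that the Taylor coefficients at the origin are slice-independent because $0\in\mathbb{R}$; everything else is either classical Cauchy--Hadamard and Taylor theory in one complex variable or a two-line computation with the monomials $(x+Iy)^n$. A minor technical point to treat carefully is the justification of term-by-term differentiation of the quaternionic series, for which it suffices, as noted above, that the formally differentiated series has the same radius of convergence.
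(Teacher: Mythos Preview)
Your proof is correct and follows the standard argument from the cited reference \cite{gentili2007new}. Note, however, that the present paper does not prove this proposition at all: it is stated as a quoted result from Gentili--Struppa and no proof is given in the paper itself, so there is nothing in the paper to compare against.

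For the record, your strategy matches that of the original source: the ``if'' direction is the direct slice-by-slice verification that $\overline{\partial_I}(x+Iy)^n=0$ together with termwise differentiation of a locally uniformly convergent series; the ``only if'' direction is precisely the splitting lemma $f_I=g+hJ$ reducing to classical one-variable Taylor theory, followed by the observation that the coefficients $\frac{1}{n!}\partial_x^n f(0)$ are slice-independent because $0\in\mathbb{R}$. Both the minor technical point about the radius of convergence of the formally differentiated series and the globalization via $B(0,R)=\bigcup_I(B(0,R)\cap\mathbb{C}_I)$ are handled correctly.
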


    \begin{prop}[\cite{gentili2013regular,gentili2007new}]\label{prop-N-series}
         Let $f\in\mathscr{R}_{l}(B(0,R))$ on the ball $B(0,R)$ centered at the origin with radius $R$, and let $$f(\mathbf{q})=\sum_{n=0}^{\infty}\mathbf{q}^n a_n$$ be power series expansion. Then $f\in\mathscr{N}_{l}(B(0,R))$ if, and only if, $\{a_n\}_{n\in\mathbb{N}}\subset\mathbb{R}$.
    \end{prop}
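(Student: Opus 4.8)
The plan is to derive both implications directly from the power series representation furnished by Proposition \ref{prop-series}, using two elementary facts about the slices: each $\mathbb{C}_I$ is a commutative real subalgebra of $\mathbb{H}$ isomorphic to $\mathbb{C}$, and $\bigcap_{I\in\mathbb{S}}\mathbb{C}_I=\mathbb{R}$.

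First I would treat the ``if'' direction. Suppose $\{a_n\}_{n\in\mathbb{N}}\subset\mathbb{R}$, fix $I\in\mathbb{S}$, and take $\mathbf{q}=x+Iy\in B(0,R)\cap\mathbb{C}_I$. Since $\mathbb{C}_I$ is closed under multiplication, every power $\mathbf{q}^n$ lies in $\mathbb{C}_I$, and multiplication by the real scalar $a_n$ keeps the product in $\mathbb{C}_I$; hence each partial sum $\sum_{n=0}^{N}\mathbf{q}^n a_n$ belongs to $\mathbb{C}_I$. As $\mathbb{C}_I$ is a closed subset of $\mathbb{H}$ and the series converges in $\mathbb{H}$ by Proposition \ref{prop-series}, the limit $f(\mathbf{q})$ lies in $\mathbb{C}_I$ as well. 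Therefore $f(B(0,R)\cap\mathbb{C}_I)\subseteq\mathbb{C}_I$ for every $I\in\mathbb{S}$, i.e. $f\in\mathscr{N}_l(B(0,R))$.

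Next I would handle the ``only if'' direction. Assume $f\in\mathscr{N}_l(B(0,R))$. For real $x$ with $|x|<R$ one has $x\in\mathbb{C}_I$ for every $I\in\mathbb{S}$, so slice preservation yields $f(x)\in\bigcap_{I\in\mathbb{S}}\mathbb{C}_I=\mathbb{R}$, where the last equality follows from the uniqueness of the decomposition $\mathbf{q}=x_0+I_{\mathbf{q}}y_0$ (a quaternion contained in two distinct slices has vanishing imaginary part). Thus $f|_{(-R,R)}$ is a real-valued function, and by Proposition \ref{prop-series} it is $C^{\infty}$. Since $a_n=\tfrac{1}{n!}\tfrac{\partial^n f}{\partial x^n}(0)$ by Proposition \ref{prop-series}, each $a_n$ is a derivative at $0$ of a real-valued smooth function of a real variable, hence $a_n\in\mathbb{R}$ for all $n\in\mathbb{N}$.

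I do not expect a genuine obstacle here; the argument is short. The only steps deserving slight care are verifying $\bigcap_{I\in\mathbb{S}}\mathbb{C}_I=\mathbb{R}$ and justifying that the limit of the $\mathbb{C}_I$-valued partial sums remains in the closed set $\mathbb{C}_I$, both of which are routine.
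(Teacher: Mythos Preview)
Your argument is correct: the ``if'' direction is an immediate closure statement for the real subalgebra $\mathbb{C}_I\subset\mathbb{H}$, and the ``only if'' direction follows from $f|_{(-R,R)}$ being real-valued together with the coefficient formula $a_n=\frac{1}{n!}\frac{\partial^n f}{\partial x^n}(0)$ from Proposition~\ref{prop-series}. Note, however, that the paper does not supply its own proof of this proposition---it is quoted from \cite{gentili2013regular,gentili2007new} without argument---so there is no in-paper proof to compare your approach against; your short proof is essentially the standard one found in those references.
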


	\begin{prop}[\cite{colombo2009extension,ghiloni2013continous}  Representation Formula]\label{prop-RF}
		Let $f$ be a left slice regular function defined on an axially symmetric slice domain $U\subseteq\mathbb{H}$. Let $J\in\mathbb{S}$ and  $x\pm Jy\in U \cap\mathbb{C}_J$. Then the following equality holds for all $\mathbf{q}=x+Iy\in U$:
		\begin{align*}
			f(x+Iy)&=\frac{1}{2}\left[f\left(x-Jy\right)+f\left(x+Jy\right)\right]+\frac{1}{2}IJ\left[f\left(x-Jy\right)-f\left(x+Jy\right)\right]\notag\\
			&=\frac{1}{2}\left(1-IJ\right)f\left(x+Jy\right)+\frac{1}{2}\left(1+IJ\right)f\left(x-Jy\right).
		\end{align*}
		Moreover, for all $x+Ky\subseteq U$, $K\in\mathbb{S}$, there exist two functions $\alpha,\beta$, independent
		of $I$, such for any $K\in\mathbb{S}$ we have
		\begin{equation*}
			\frac{1}{2}\left[f\left(x-Ky\right)+f\left(x+Ky\right)\right]=\alpha(x,y)
		\end{equation*}
		and
		\begin{equation*}
			\frac{1}{2}K\left[f\left(x-Ky\right)-f\left(x+Ky\right)\right]=\beta(x,y).
		\end{equation*}
	\end{prop}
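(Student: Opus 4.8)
The plan is to reduce the first displayed identity to the identity principle for slice regular functions by exhibiting a slice regular function with the prescribed values on $U\cap\mathbb{C}_J$, to read off the second identity as a purely algebraic rearrangement of the first, and to obtain the statement about $\alpha,\beta$ by specializing $I$ to two antipodal units.

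\emph{Construction.} Fix $J\in\mathbb{S}$ and for $\mathbf{q}=x+Iy\in U$ set
\[
g(x+Iy):=\tfrac12(1-IJ)\,f(x+Jy)+\tfrac12(1+IJ)\,f(x-Jy).
\]
Since $U$ is axially symmetric, $x\pm Jy\in U\cap\mathbb{C}_J$ whenever $x+Iy\in U$, so $g$ is well defined on $U$; on $U\cap\mathbb{R}$ both summands equal $f(x)$, so $g$ restricted to $\mathbb{R}$ agrees with $f$. One first checks that $g$ is real differentiable on $U$: off $\mathbb{R}$ this is immediate because $\mathbf{q}\mapsto(x,y,I_{\mathbf{q}})$ is smooth there and $f$ restricted to $\mathbb{C}_J$ is smooth, while across $\mathbb{R}$ one compares $g$ with the local power series of $f$ furnished by Proposition \ref{prop-series}.

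\emph{Regularity of $g$.} Write $u(x,y)=f(x+Jy)$ and $v(x,y)=f(x-Jy)$, regarded as functions of the real variables $x,y$. Left slice regularity of $f$ on $\mathbb{C}_J$ gives $u_x=-Ju_y$, and differentiating $v(x,y)=u(x,-y)$ then yields $v_x=Jv_y$. Using only $I^2=J^2=-1$ one has $I(1-IJ)=I+J$, $I(1+IJ)=I-J$, $(1-IJ)(-J)=-(I+J)$ and $(1+IJ)J=-(I-J)$; substituting these into $\partial_x g+I\,\partial_y g$ gives
\[
\partial_x g+I\,\partial_y g=-\tfrac12(I+J)u_y-\tfrac12(I-J)v_y+\tfrac12(I+J)u_y+\tfrac12(I-J)v_y=0,
\]
so $\overline{\partial_I}g=0$ on $U\cap\mathbb{C}_I$ for every $I\in\mathbb{S}$, i.e.\ $g\in\mathscr{R}_l(U)$. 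Taking $I=J$ annihilates the $(1+IJ)$ term and $I=-J$ the $(1-IJ)$ term, so $g=f$ on $U\cap\mathbb{C}_J$; since $U$ is a slice domain, the identity principle for slice regular functions (a standard fact in this setting) forces $g\equiv f$ on $U$, which is the first displayed formula. The second is the regrouping $\tfrac12(f(x-Jy)+f(x+Jy))+\tfrac12 IJ(f(x-Jy)-f(x+Jy))=\tfrac12(1+IJ)f(x-Jy)+\tfrac12(1-IJ)f(x+Jy)$.

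\emph{Independence of the slice.} The first form reads $f(x+Iy)=\alpha(x,y)+I\beta(x,y)$ with $\alpha(x,y)=\tfrac12(f(x-Jy)+f(x+Jy))$ and $\beta(x,y)=\tfrac12 J(f(x-Jy)-f(x+Jy))$, and it holds for \emph{all} $I\in\mathbb{S}$. Given any other $K\in\mathbb{S}$, evaluating at $I=K$ and $I=-K$ and then adding, respectively subtracting, the two resulting equations gives $\alpha(x,y)=\tfrac12(f(x-Ky)+f(x+Ky))$ and $\beta(x,y)=\tfrac12 K(f(x-Ky)-f(x+Ky))$ (using $K^{-1}=-K$), so $\alpha$ and $\beta$ do not depend on the chosen imaginary unit. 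The one genuinely delicate point is the real differentiability of $g$ across $\mathbb{R}$ — equivalently, that $g$ is a bona fide slice regular function and not merely holomorphic on each slice — and this is precisely where the slice-domain and axial-symmetry hypotheses are needed; everything else is routine quaternionic algebra together with the identity principle.
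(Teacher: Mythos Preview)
The paper does not prove this proposition: it is quoted as background with attribution to \cite{colombo2009extension,ghiloni2013continous}. Your argument is essentially the standard one from those sources---define $g$ by the right-hand side, verify $\overline{\partial}_I g=0$ on each slice, match $g$ with $f$ on $\mathbb{C}_J$, and conclude by an identity principle---and the algebra you display is correct.

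On the point you single out as delicate (real differentiability of $g$ across $\mathbb{R}$, which Definition~\ref{def-SR} demands before you may call $g$ slice regular and invoke the identity principle for $\mathscr{R}_l(U)$): this can be bypassed rather than confronted. For each fixed $I$ the restriction $g_I$ is visibly smooth in $(x,y)$, being built from $f|_{\mathbb{C}_J}$, and you have already shown $\overline{\partial}_I g_I=0$. Writing $g_I-f_I=F+GJ'$ with $J'\perp I$ (the elementary splitting, which needs no global hypothesis on $g$) gives $F,G:U\cap\mathbb{C}_I\to\mathbb{C}_I$ holomorphic. Since $g=f$ on $U\cap\mathbb{R}$ and $U\cap\mathbb{C}_I$ is a connected domain by the definition of slice domain, the classical one-variable identity principle forces $F=G=0$, hence $g_I=f_I$ for every $I$. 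This slice-by-slice route never requires $g$ to be globally real differentiable on $\mathbb{H}$, so the acknowledged gap closes without the power-series comparison you sketch. Your derivation of the $\alpha,\beta$ statement from the first identity by evaluating at $I=\pm K$ is fine as written.
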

	As a consequence of Proposition \ref{prop-RF}, left slice regular functions have the following property.
 \begin{prop}[\cite{gentili2013regular,gentili2011weierstrass}]\label{prop-fIinBR-finB2R}
     Let $U\subseteq\mathbb{H}$ be an axially symmetric slice domain and $f\in\mathscr{R}_{l}(U)$. Let $V\subseteq U$ be a symmetric compact set. For every $I\in\mathbb{S}$, $\mathbf{q}\in\mathbb{H}$, and $R>0$ such
that
\begin{equation*}
f_{I}\left(V_{I}\right)\subseteq B\left(\mathbf{q},R\right)
\end{equation*}
the inclusion
\begin{equation*}
    f\left(V\right)\subseteq B\left(\mathbf{q},2R\right)
\end{equation*}
holds.
 \end{prop}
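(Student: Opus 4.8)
The plan is to obtain this as an essentially one-step consequence of the Representation Formula (Proposition~\ref{prop-RF}), using nothing beyond the multiplicativity of the quaternionic norm and the triangle inequality.

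First I would fix $I\in\mathbb{S}$, $\mathbf{q}\in\mathbb{H}$ and $R>0$ with $f_{I}(V_{I})\subseteq B(\mathbf{q},R)$, and take an arbitrary point $\mathbf{p}=x+Ky\in V$ with $K\in\mathbb{S}$ and $x,y\in\mathbb{R}$, say $y\ge 0$. Since $V$ is symmetric, the $2$-sphere $x+y\mathbb{S}$ lies in $V$, so in particular $x+Iy$ and $x-Iy$ belong to $V\cap\mathbb{C}_{I}=V_{I}$ (for $y=0$ this merely records $x\in V_I$); the hypothesis then gives $|f(x+Iy)-\mathbf{q}|<R$ and $|f(x-Iy)-\mathbf{q}|<R$. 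Moreover $x\pm Iy\in U$ because $V\subseteq U$ and $U$ is an axially symmetric slice domain, so Proposition~\ref{prop-RF} is available on $\mathbb{C}_{I}$.

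Next I would apply Proposition~\ref{prop-RF} with reference slice $J=I$ and the generic imaginary unit taken to be $K$, which expresses $f$ at $\mathbf{p}$ through its values on $\mathbb{C}_{I}$:
\[
f(x+Ky)=\tfrac{1}{2}(1-KI)\,f(x+Iy)+\tfrac{1}{2}(1+KI)\,f(x-Iy).
\]
Since $\tfrac{1}{2}(1-KI)+\tfrac{1}{2}(1+KI)=1$, subtracting $\mathbf{q}$ gives
\[
f(x+Ky)-\mathbf{q}=\tfrac{1}{2}(1-KI)\bigl(f(x+Iy)-\mathbf{q}\bigr)+\tfrac{1}{2}(1+KI)\bigl(f(x-Iy)-\mathbf{q}\bigr).
\]
By the triangle inequality $|1\pm KI|\le|1|+|KI|=2$, hence $|\tfrac{1}{2}(1-KI)|\le 1$ and $|\tfrac{1}{2}(1+KI)|\le 1$; together with $|ab|=|a|\,|b|$ and one further use of the triangle inequality this yields
\[
|f(x+Ky)-\mathbf{q}|\le|f(x+Iy)-\mathbf{q}|+|f(x-Iy)-\mathbf{q}|<2R,
\]
so $f(\mathbf{p})\in B(\mathbf{q},2R)$. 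As $\mathbf{p}\in V$ was arbitrary, $f(V)\subseteq B(\mathbf{q},2R)$.

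I do not expect a real obstacle here: the argument is almost forced once the Representation Formula is in hand. The only points deserving a line of care are invoking the symmetry of $V$ to place $x\pm Iy$ in $V_{I}$, checking the hypotheses of Proposition~\ref{prop-RF} (guaranteed by $V\subseteq U$ and the standing assumptions on $U$), and the elementary estimate $|1\pm KI|\le 2$, which is precisely what produces the constant $2$ in the conclusion. An equivalent route uses the functions $\alpha,\beta$ of Proposition~\ref{prop-RF}: writing $f(x+Iy)=\alpha+I\beta$, $f(x-Iy)=\alpha-I\beta$ and $f(\mathbf{p})=\alpha+K\beta$, adding and subtracting the bounds $|(\alpha-\mathbf{q})\pm I\beta|<R$ gives $|\alpha-\mathbf{q}|<R$ and $|\beta|<R$, whence $|f(\mathbf{p})-\mathbf{q}|\le|\alpha-\mathbf{q}|+|\beta|<2R$.
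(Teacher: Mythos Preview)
The paper does not supply its own proof of this proposition; it is quoted from the cited references \cite{gentili2013regular,gentili2011weierstrass}. Your argument is correct and is essentially the standard proof found there: reconstruct $f(x+Ky)$ from $f(x\pm Iy)$ via the Representation Formula, subtract $\mathbf{q}$ using $\tfrac{1}{2}(1-KI)+\tfrac{1}{2}(1+KI)=1$, and bound with $|1\pm KI|\le 2$.
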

 
	\begin{prop}[\cite{ghiloni2011slice}]\label{prop-f=a+Ib}
		Let $U\subseteq\mathbb{H}$ be an axially symmetric slice domain, let $D\subseteq\mathbb{R}^2$ be such
		that $x+Iy\in U$ whenever $(x,y)\in D$ and let $f:U\to\mathbb{H}$. The function $f$ is left slice
		regular if and only if there exist two differentiable functions $\alpha,\beta:D\subseteq\mathbb{R}^2\to\mathbb{H}$ satisfying $\alpha(x,y)=\alpha(x,-y)$, $\beta(x,y)=-\beta(x,-y)$ and the Cauchy–Riemann system
		\begin{equation*}
			\begin{cases}
				\partial_{x}\alpha-\partial_{y}\beta=0\\
				\partial_{y}\alpha+\partial_{x}\beta=0
			\end{cases}
		\end{equation*}
		such that $f\left(x+Iy\right)=\alpha(x,y)+I\beta(x,y)$.
	\end{prop}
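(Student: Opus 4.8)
The plan is to prove the two implications separately: the \emph{only if} direction rests on the Representation Formula (Proposition \ref{prop-RF}), and the \emph{if} direction on a direct verification of Definition \ref{def-SR}.

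For necessity, assume $f\in\mathscr R_l(U)$. I would fix an arbitrary $K\in\mathbb S$ and \emph{define}, for $(x,y)\in D$,
\[
\alpha(x,y)=\tfrac12\big(f(x+Ky)+f(x-Ky)\big),\qquad \beta(x,y)=\tfrac12 K\big(f(x-Ky)-f(x+Ky)\big).
\]
By Proposition \ref{prop-RF} these functions do not depend on the choice of $K$ and satisfy $f(x+Iy)=\alpha(x,y)+I\beta(x,y)$ for every $I\in\mathbb S$; since $U$ is an axially symmetric slice domain, $D$ is open and invariant under $y\mapsto-y$, and interchanging $y$ with $-y$ in the two formulas yields at once $\alpha(x,-y)=\alpha(x,y)$ and $\beta(x,-y)=-\beta(x,y)$. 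Differentiability of $\alpha$ and $\beta$ on $D$ follows because $f$ is real differentiable and $(x,y)\mapsto x\pm Ky$ is affine. To obtain the Cauchy--Riemann system I would differentiate $f_I(x+Iy)=\alpha(x,y)+I\beta(x,y)$ along the slice $\mathbb C_I$ and feed the result into $\overline{\partial_I}f_I=0$, getting
\[
\big(\partial_x\alpha-\partial_y\beta\big)+I\big(\partial_x\beta+\partial_y\alpha\big)=0\qquad\text{for every }I\in\mathbb S;
\]
writing this for $I$ and for $-I$ and taking the sum and the difference forces the two $\mathbb H$-valued terms to vanish separately, which is precisely the claimed system.

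For sufficiency, given differentiable $\alpha,\beta:D\to\mathbb H$ with $\alpha(x,y)=\alpha(x,-y)$, $\beta(x,y)=-\beta(x,-y)$ and satisfying the system, I would take $D$ to be the maximal set $\{(x,y):x+Iy\in U\}$ and set $f(x+Iy):=\alpha(x,y)+I\beta(x,y)$ on $U$. One first checks that this is well defined: a point $\mathbf q\notin\mathbb R$ admits exactly the two representations $x+Iy$ and $x+(-I)(-y)$, and the parities of $\alpha$ and $\beta$ make both give the same value, while on $\mathbb R$ the oddness of $\beta$ forces $\beta(x,0)=0$, so $f(x)=\alpha(x,0)$ is unambiguous. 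One then shows that $f$ is real differentiable on $U$; granting this, on each slice $\mathbb C_I$ one computes directly
\[
\overline{\partial_I}f_I(x+Iy)=\tfrac12(\partial_x+I\partial_y)\big(\alpha+I\beta\big)=\tfrac12\Big[\big(\partial_x\alpha-\partial_y\beta\big)+I\big(\partial_x\beta+\partial_y\alpha\big)\Big]=0
\]
by the Cauchy--Riemann system, so $f\in\mathscr R_l(U)$ by Definition \ref{def-SR}.

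The step I expect to be the real obstacle is establishing that the function $f$ built in the sufficiency part is genuinely real differentiable \emph{at points of the real axis} $\mathbb R\subset U$ (real differentiability being part of the very definition of slice regularity). Away from $\mathbb R$ this is routine: using the smooth chart $x_0+\mathbf i x_1+\mathbf j x_2+\mathbf k x_3\mapsto\big(x_0,\ \sqrt{x_1^2+x_2^2+x_3^2}\,,\ (\mathbf i x_1+\mathbf j x_2+\mathbf k x_3)/\sqrt{x_1^2+x_2^2+x_3^2}\big)$, $f$ is a composition of differentiable maps. Near $\mathbb R$ one must instead exploit that $\beta$ is odd, so that $\beta(x,y)/y$ extends to a well-behaved even function of $y$ with limit $\partial_y\beta(x,0)$, whence $I\beta(x,y)=(\mathbf i x_1+\mathbf j x_2+\mathbf k x_3)\,\beta(x,y)/y$ stays differentiable in $(x_1,x_2,x_3)$ at the origin; combined with the evenness of $\alpha$ this gives real differentiability on all of $U$. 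The remaining pieces --- consistency of $\alpha,\beta$ across slices and the parity relations --- are bookkeeping around Proposition \ref{prop-RF}.
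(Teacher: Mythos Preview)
The paper does not prove this proposition: it is quoted from \cite{ghiloni2011slice} and stated without argument, so there is nothing to compare your attempt against. That said, your approach is the standard one and is sound in its architecture --- the necessity via the Representation Formula (Proposition~\ref{prop-RF}) together with the $I\mapsto -I$ trick to separate the two Cauchy--Riemann equations, and the sufficiency via a direct computation of $\overline{\partial_I}f_I$, are exactly how one proceeds.

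The one place that deserves a remark is the point you yourself single out: real differentiability of the rebuilt $f$ at points of $\mathbb R$. Your heuristic that $I\beta(x,y)=(\mathbf i x_1+\mathbf j x_2+\mathbf k x_3)\,\beta(x,y)/y$ with $\beta(x,y)/y\to\partial_y\beta(x,0)$ is the right idea, but mere differentiability of $\beta$ only gives a pointwise limit, not differentiability of the quotient; to close the argument cleanly you need $\alpha,\beta\in C^1$ (as is assumed in the original reference, where one works with $C^1$ or real-analytic stem functions), in which case the identity $\beta(x,y)/y=\int_0^1\partial_y\beta(x,ty)\,dt$ makes the extension $C^0$ in $(x,y)$ and the product differentiable in the ambient $(x_0,x_1,x_2,x_3)$ coordinates. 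With that regularity understood, your proof is complete.
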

	
		Proposition \ref{prop-f=a+Ib} and Proposition \ref{prop-RF} imply that $f\in\mathscr{N}_{l}(U)$ if, and only if 
        \[\alpha\left(x,y\right)=\frac{1}{2}\left[f\left(x+Iy\right)+f\left(x-Iy\right)\right]\]
        and
        \[\beta\left(x,y\right)=\frac{-I}{2}\left[f\left(x+Iy\right)-f\left(x-Iy\right)\right]\]
        are two real-valued functions (see \cite[Proposition 10]{ghiloni2011slice} and \cite[Proposition 2.5]{colombo2016entire}). if this is the case and if $x+Iy \in U$, it follows at once that $\Re f\left(x+Iy\right) = \alpha\left(x,y\right)$ and that $\lvert f\left(x+Iy\right)\rvert=\sqrt{\alpha\left(x,y\right)^{2}+\beta\left(x,y\right)^{2}}$, whence $\lvert f\left(x+Iy\right)\rvert=\lvert f\left(x+Jy\right)\rvert$ for all $I,J\in\mathbb{S}$. Moreover, the following result is given.
	
	\begin{prop}[\cite{colombo2016entire}]\label{prop-N_l}
		Let $U\subseteq\mathbb{H}$ be an axially symmetric slice domain. Then $f\in\mathscr{N}_{l}(U)$ if and only if $f\in\mathscr{R}_{l}(U)$ satisfies $f(\overline{\mathbf{q}})=\overline{f(\mathbf{q})}$ for all $\mathbf{q}\in U$.
	\end{prop}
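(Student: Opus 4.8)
The plan is to reduce the statement to the description of $\mathscr{N}_l(U)$ recorded in the discussion following Proposition \ref{prop-f=a+Ib}: namely, by Proposition \ref{prop-f=a+Ib} and the Representation Formula (Proposition \ref{prop-RF}) we may write $f(x+Iy)=\alpha(x,y)+I\beta(x,y)$ for every $I\in\mathbb{S}$ and every $(x,y)$ in the associated planar domain $D$, where $\alpha,\beta$ do not depend on $I$ and $D$ is symmetric under $y\mapsto -y$ because $U$ is axially symmetric; and $f\in\mathscr{N}_l(U)$ if and only if both $\alpha$ and $\beta$ are real-valued. So it suffices to prove that $\alpha$ and $\beta$ are real-valued precisely when $f(\overline{\mathbf q})=\overline{f(\mathbf q)}$ for all $\mathbf q\in U$. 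Throughout I will use that quaternionic conjugation is an anti-homomorphism, so $\overline{I\beta}=\overline\beta\,\overline I=-\overline\beta I$, and that $\overline{\mathbf q}=x+(-I)y$ when $\mathbf q=x+Iy$.

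For the forward implication I would assume $\alpha,\beta$ real. Then the representation gives $f(\overline{\mathbf q})=\alpha(x,y)+(-I)\beta(x,y)=\alpha(x,y)-I\beta(x,y)$, while $\overline{f(\mathbf q)}=\overline{\alpha(x,y)}+\overline{I\beta(x,y)}=\alpha(x,y)-\overline{\beta(x,y)}I=\alpha(x,y)-I\beta(x,y)$, using that $\alpha$ is real and that the real scalar $\beta(x,y)$ commutes with $I$. Hence $f(\overline{\mathbf q})=\overline{f(\mathbf q)}$.

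For the converse I would assume $f(\overline{\mathbf q})=\overline{f(\mathbf q)}$ for all $\mathbf q$ and evaluate both sides via the representation, obtaining for every $I\in\mathbb{S}$ the identity $\alpha-I\beta=\overline\alpha-\overline\beta I$, i.e. $\alpha-\overline\alpha=I\beta-\overline\beta I$. Replacing $I$ by $-I$ (which also lies in $\mathbb{S}$) and adding yields $2(\alpha-\overline\alpha)=0$, so $\alpha$ is real, and then $I\beta=\overline\beta I$ for every $I\in\mathbb{S}$. Splitting $\beta=\beta_0+\vec\beta$ into its real and imaginary parts, this reduces to $I\vec\beta+\vec\beta I=0$; since for unit imaginary quaternions $I\vec\beta+\vec\beta I=-2\langle I,\vec\beta\rangle$, we get $\langle I,\vec\beta\rangle=0$ for every $I\in\mathbb{S}$, whence $\vec\beta=0$ and $\beta$ is real as well. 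By the cited characterization, $f\in\mathscr{N}_l(U)$.

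The only real bookkeeping to be careful with is the noncommutativity of quaternion multiplication — keeping the conjugation identity $\overline{I\beta}=-\overline\beta I$ straight, and using the scalar-product identity $I\vec\beta+\vec\beta I=-2\langle I,\vec\beta\rangle$ to conclude that $\vec\beta$ is orthogonal to every imaginary unit and hence vanishes. (On balls one could alternatively argue via the power series expansion and Proposition \ref{prop-N-series}, but the $\alpha,\beta$-approach has the advantage of working directly on an arbitrary axially symmetric slice domain.) Everything else is immediate from Proposition \ref{prop-f=a+Ib} and Proposition \ref{prop-RF}.
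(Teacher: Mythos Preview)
The paper does not supply its own proof of this proposition; it is quoted from \cite{colombo2016entire} as a preliminary result, with the characterization ``$f\in\mathscr{N}_l(U)$ iff $\alpha,\beta$ are real-valued'' recorded (with references) in the paragraph immediately preceding the statement. Your argument is correct and is exactly the natural proof one writes starting from that characterization: the forward direction is a one-line computation, and for the converse the trick of replacing $I$ by $-I$ to isolate $\alpha-\overline\alpha$, followed by the identity $I\vec\beta+\vec\beta I=-2\langle I,\vec\beta\rangle$ for purely imaginary quaternions, cleanly forces $\alpha$ and $\beta$ to be real. There is nothing to compare against in the paper itself, but your proof is precisely in the spirit of the surrounding discussion (Propositions \ref{prop-RF} and \ref{prop-f=a+Ib}) and matches the standard argument in the cited literature.
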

 	
	\begin{prop}[\cite{colombo2009extension}]\label{prop-SRE}
		Let $U_J$ be a domain in $\mathbb{C}_J$ symmetric with respect to the real axis and such that $U_J\cap\mathbb{R}\neq\varnothing$. Let $U$ be the axially symmetric slice domain defined by
		\[U=\bigcup_{x+Jy\in U_{J},I\in\mathbb{S}}\left\{x+Iy\right\}.\]
		If $f:U_J\to\mathbb{H}$ satisfies $\overline{\partial_J}f=0$ then the function
		\[\mathbf{ext}_{l}(f)\left(x+Iy\right):=\frac{1}{2}\left[f\left(x-Jy\right)+f\left(x+Jy\right)\right]+\frac{1}{2}IJ\left[f\left(x-Jy\right)-f\left(x+Jy\right)\right]\]
		is the unique left slice regular extension of $f$ to $U$.
	\end{prop}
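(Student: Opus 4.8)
The plan is to establish three things in turn: (a) that $\mathbf{ext}_{l}(f)$ is a well-defined $\mathbb{H}$-valued function on $U$ which restricts to $f$ on $U_{J}$; (b) that $\mathbf{ext}_{l}(f)$ is left slice regular on $U$; and (c) that it is the \emph{only} left slice regular function on $U$ with this restriction. Parts (a) and (c) should be purely formal, using only the definitions and the Representation Formula (Proposition \ref{prop-RF}); the real content is in (b), which I would deduce from the $\alpha$--$\beta$ characterization of slice regularity in Proposition \ref{prop-f=a+Ib}.

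For (a): a point of $U$ with $y\neq0$ admits exactly two representations as $x+Iy$, namely with the pair $(I,y)$ or with $(-I,-y)$, and substituting $(I,y)\mapsto(-I,-y)$ into the defining expression leaves it unchanged, so $\mathbf{ext}_{l}(f)$ is well defined; on $U\cap\mathbb{R}$ it reduces to $f$. Since $U_{J}$ is symmetric about $\mathbb{R}$ we have $x-Jy\in U_{J}$ whenever $x+Jy\in U_{J}$, so the formula is meaningful on $U_{J}$, and putting $I=J$ and using $J^{2}=-1$ collapses it to $f(x+Jy)$; hence $\mathbf{ext}_{l}(f)|_{U_{J}}=f$.

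For (b): I would set $g(x,y):=f(x+Jy)$ and
\[
\alpha(x,y):=\frac{1}{2}\bigl[g(x,-y)+g(x,y)\bigr],\qquad
\beta(x,y):=\frac{1}{2}J\bigl[g(x,-y)-g(x,y)\bigr],
\]
so that $\mathbf{ext}_{l}(f)(x+Iy)=\alpha(x,y)+I\beta(x,y)$. One checks at once that $\alpha(x,-y)=\alpha(x,y)$ and $\beta(x,-y)=-\beta(x,y)$, and that $\alpha,\beta$ are differentiable because $f$ is. The hypothesis $\overline{\partial_{J}}f=0$ says precisely $\partial_{x}g=-J\,\partial_{y}g$, equivalently $\partial_{y}g=J\,\partial_{x}g$. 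Differentiating $\alpha$ and $\beta$ --- being careful about the extra sign the chain rule produces on the reflected argument $(x,-y)$ --- and substituting these relations, the identity $J^{2}=-1$ makes the terms cancel in pairs and yields the Cauchy--Riemann system $\partial_{x}\alpha-\partial_{y}\beta=0$, $\partial_{y}\alpha+\partial_{x}\beta=0$. Proposition \ref{prop-f=a+Ib} then gives $\mathbf{ext}_{l}(f)\in\mathscr{R}_{l}(U)$. This sign bookkeeping in the Cauchy--Riemann verification is the only delicate point; it is not deep, but it is where the computation could go wrong.

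For (c): if $h\in\mathscr{R}_{l}(U)$ satisfies $h|_{U_{J}}=f$, then applying the Representation Formula (Proposition \ref{prop-RF}) to $h$ with the slice $\mathbb{C}_{J}$ gives, for every $x+Iy\in U$,
\[
h(x+Iy)=\frac{1}{2}\bigl[h(x-Jy)+h(x+Jy)\bigr]+\frac{1}{2}IJ\bigl[h(x-Jy)-h(x+Jy)\bigr],
\]
and since $h(x\pm Jy)=f(x\pm Jy)$ the right-hand side is exactly $\mathbf{ext}_{l}(f)(x+Iy)$. Hence $h=\mathbf{ext}_{l}(f)$, which finishes the argument.
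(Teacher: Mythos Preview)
Your proof is correct. The paper does not prove this proposition itself---it is quoted from \cite{colombo2009extension} as a known extension result and used as a tool throughout. Your route via the $\alpha$--$\beta$ characterization (Proposition~\ref{prop-f=a+Ib}) is clean and valid: the sign bookkeeping in (b) works because $J$ multiplies on the \emph{left} both in the definition of $\beta$ and in the relation $\partial_{y}g=J\,\partial_{x}g$ coming from $\overline{\partial_{J}}f=0$, so the two factors of $J$ collapse via $J^{2}=-1$ with no commutativity issues against the $\mathbb{H}$-valued $g$; and (c) is exactly the standard uniqueness consequence of the Representation Formula.
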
  
 Furthermore, if $f:U_{J}\to\mathbb{C}_{J}$ satisfies $\overline{\partial_J}f=0$ and $f(x-Jy)=\overline{f(x+Jy)}$, then $\mathbf{ext}_{l}(f)\in\mathscr{N}_{l}(U)$, in fact we have:
 \begin{prop}\label{prop-extl(f)inN}
     Let $U_J$ be a domain in $\mathbb{C}_J$ symmetric with respect to the real axis and such that $U_J\cap\mathbb{R}\neq\varnothing$. Let $U$ be the axially symmetric slice domain defined by
		\[U=\bigcup_{x+Jy\in U_{J},I\in\mathbb{S}}\left\{x+Iy\right\}.\]
		If $f:U_J\to\mathbb{C}_{J}$ satisfies $\overline{\partial_J}f=0$ and $f(x-Jy)=\overline{f(x+Jy)}$ then $\mathbf{ext}_{l}(f)\in\mathscr{N}_{l}(U)$.
 \end{prop}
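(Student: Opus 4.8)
The plan is to obtain left slice regularity for free from Proposition \ref{prop-SRE} and then to verify the defining property of $\mathscr{N}_{l}$ by a direct computation with the explicit formula for $\mathbf{ext}_{l}(f)$. By Proposition \ref{prop-SRE}, $g:=\mathbf{ext}_{l}(f)$ is the (unique) left slice regular extension of $f$ to $U$, so $g\in\mathscr{R}_{l}(U)$, and it only remains to show that $g(U\cap\mathbb{C}_{I})\subseteq\mathbb{C}_{I}$ for every $I\in\mathbb{S}$.

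Fix $I\in\mathbb{S}$ and a point $x+Iy\in U$; by the axial symmetry of $U_{J}$, the points $x\pm Jy$ both lie in $U_{J}$. Since $f$ takes values in $\mathbb{C}_{J}$, I would write $f(x+Jy)=u+Jv$ with $u=u(x,y)$, $v=v(x,y)\in\mathbb{R}$, so that the hypothesis $f(x-Jy)=\overline{f(x+Jy)}$ becomes $f(x-Jy)=u-Jv$. Substituting these two expressions into
\[
g(x+Iy)=\tfrac{1}{2}\big[f(x-Jy)+f(x+Jy)\big]+\tfrac{1}{2}IJ\big[f(x-Jy)-f(x+Jy)\big],
\]
the first bracket equals $2u$ and the second equals $-2Jv$; since $v$ is real (hence central) and $J^{2}=-1$, one gets $\tfrac{1}{2}IJ(-2Jv)=-IJ^{2}v=Iv$, whence
\[
g(x+Iy)=u(x,y)+I\,v(x,y)\in\mathbb{C}_{I}.
\]
As $I$ and $x+Iy\in U$ were arbitrary, this is precisely the statement that $g=\mathbf{ext}_{l}(f)\in\mathscr{N}_{l}(U)$. (Equivalently, since the same hypothesis forces $u(x,-y)=u(x,y)$ and $v(x,-y)=-v(x,y)$, the displayed identity yields $g(\overline{\mathbf{q}})=\overline{g(\mathbf{q})}$, so one could instead conclude via Proposition \ref{prop-N_l}; one also recognizes $u$ and $v$ as the real-valued functions $\alpha,\beta$ appearing in the discussion after Proposition \ref{prop-N_l}.)

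The computation is short and I anticipate no genuine obstacle; the only point requiring care is the noncommutativity of $\mathbb{H}$. One must keep $f(x\pm Jy)$ inside the commutative plane $\mathbb{C}_{J}$ while simplifying $f(x-Jy)-f(x+Jy)=-2Jv$, and one must move only the real scalar $v$ past $I$ and $J$ when evaluating $IJ\cdot Jv=v\,IJ^{2}=-Iv$. It is also worth noting, although it is automatic, that the hypothesis at $y=0$ forces $f$ to be real-valued on $U_{J}\cap\mathbb{R}$, which is of course consistent with $\mathbf{ext}_{l}(f)$ being slice preserving.
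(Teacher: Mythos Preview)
Your proof is correct and follows essentially the same route as the paper's: both use Proposition~\ref{prop-SRE} to obtain slice regularity, then write $f(x+Jy)$ in terms of its real and $J$-imaginary parts (your $u,v$ are exactly the paper's $\alpha,\beta$) and substitute into the extension formula to get $\mathbf{ext}_{l}(f)(x+Iy)=u+Iv$. The only cosmetic difference is the final step: you conclude directly from the definition of $\mathscr{N}_{l}$ (slice preservation), whereas the paper instead checks $\mathbf{ext}_{l}(f)(x-Iy)=\overline{\mathbf{ext}_{l}(f)(x+Iy)}$ and invokes Proposition~\ref{prop-N_l}; you already anticipated this alternative in your parenthetical remark.
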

 \begin{proof}
     Since $f$ is $\mathbb{C}_{J}$-valued and satisfies $f(x-Jy)=\overline{f(x+Jy)}$ for all $x+Jy\in U_{J}$, we have that 
     \[\alpha\left(x,y\right)=\frac{1}{2}\left[f(x+Jy)+f(x-Jy)\right]\]
     and
     \[\beta\left(x,y\right)=\frac{-J}{2}\left[f(x+Jy)-f(x-Jy)\right]\]
     are two real-valued functions. Since $U$ is the axially symmetric slice domain, we have that if $x+Jy\in U_{J}$ then $x+Iy\in U$ for all $I\in\mathbb{S}$. Since $\overline{\partial_{J}}f=0$, using Proposition \ref{prop-SRE}, we can get that for all $x+Jy\in U_{J}$ and $I\in\mathbb{S}$, 
     $$\mathbf{ext}_{l}(f)(x+Iy)=\alpha(x,y)+I\beta(x,y)$$ is in $\mathscr{R}_{l}(U)$. Note that 
     \begin{align*}
         \mathbf{ext}_{l}(f)\left(x-Iy\right)=&\alpha\left(x,-y\right)+I\beta\left(x,-y\right)\\
         =&\alpha\left(x,y\right)-I\beta\left(x,y\right)\\
         =&\overline{\alpha\left(x,y\right)+I\beta\left(x,y\right)}\\
         =&\overline{\mathbf{ext}_{l}(f)\left(x+Iy\right)}.
     \end{align*}
     Hence, $\mathbf{ext}_{l}(f)\in\mathscr{N}_{l}(U)$.
 \end{proof}
 
	\begin{prop}[\cite{colombo2016entire,ghiloni2013continous}]\label{prop-NE}
			Let $U\subseteq\mathbb{H}$ be an axially symmetric slice domain and $\left\{1,I,J,IJ\right\}$ be a basis of $\mathbb{H}$. Then
			\begin{equation*}
				\mathscr{R}_{l}(U)=\mathscr{N}_{l}(U)\oplus\mathscr{N}_{l}(U)I\oplus\mathscr{N}_{l}(U)J\oplus\mathscr{N}_{l}(U)IJ.
			\end{equation*}
	\end{prop}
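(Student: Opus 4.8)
The plan is to push everything down to a single slice $\mathbb{C}_{I}$ via the symplectic splitting $\mathbb{H}=\mathbb{C}_{I}\oplus\mathbb{C}_{I}J$, decompose the restricted function there into conjugation‑symmetric pieces, and then lift back by $\mathbf{ext}_{l}$. I would begin with the trivial inclusion: for any constant $c\in\mathbb{H}$ one has $\overline{\partial_{I}}(g_{I}c)=(\overline{\partial_{I}}g_{I})c$, so $\mathscr{R}_{l}(U)$ is a right $\mathbb{H}$-module; in particular each of $\mathscr{N}_{l}(U)$, $\mathscr{N}_{l}(U)I$, $\mathscr{N}_{l}(U)J$, $\mathscr{N}_{l}(U)IJ$ lies inside $\mathscr{R}_{l}(U)$, so it remains to prove that the four summands span $\mathscr{R}_{l}(U)$ and that the sum is direct.

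For the spanning statement, take $f\in\mathscr{R}_{l}(U)$ and restrict it to $U_{I}:=U\cap\mathbb{C}_{I}$. Since $U$ is an axially symmetric slice domain, $U_{I}$ is a domain in $\mathbb{C}_{I}$ symmetric about the real axis with $U_{I}\cap\mathbb{R}=U\cap\mathbb{R}\neq\varnothing$. Write $f_{I}=g_{1}+g_{2}J$ with $g_{1},g_{2}\colon U_{I}\to\mathbb{C}_{I}$; because $\overline{\partial_{I}}(g_{2}J)=(\overline{\partial_{I}}g_{2})J$ and $\{1,J\}$ is left-$\mathbb{C}_{I}$-independent, $\overline{\partial_{I}}f_{I}=0$ forces $\overline{\partial_{I}}g_{1}=\overline{\partial_{I}}g_{2}=0$. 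I would then split each $g_{k}$ by Schwarz reflection, $g_{k}^{+}(z):=\tfrac12\bigl(g_{k}(z)+\overline{g_{k}(\bar z)}\bigr)$ and $g_{k}^{-}(z):=\tfrac1{2I}\bigl(g_{k}(z)-\overline{g_{k}(\bar z)}\bigr)$; each $g_{k}^{\pm}$ is again holomorphic and $\mathbb{C}_{I}$-valued on $U_{I}$, satisfies $g_{k}^{\pm}(\bar z)=\overline{g_{k}^{\pm}(z)}$, and (using that $\mathbb{C}_{I}$-values commute with $I$) $g_{k}=g_{k}^{+}+g_{k}^{-}I$. Hence $f_{I}=g_{1}^{+}+g_{1}^{-}I+g_{2}^{+}J+g_{2}^{-}IJ$ on $U_{I}$.

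Now I would apply the extension operator from the slice $\mathbb{C}_{I}$ to the four coefficients, setting $h_{0}:=\mathbf{ext}_{l}(g_{1}^{+})$, $h_{1}:=\mathbf{ext}_{l}(g_{1}^{-})$, $h_{2}:=\mathbf{ext}_{l}(g_{2}^{+})$, $h_{3}:=\mathbf{ext}_{l}(g_{2}^{-})$. By Proposition \ref{prop-SRE} each $h_{k}$ is left slice regular on $U$ and restricts to the corresponding $g_{k}^{\pm}$ on $U_{I}$, and by Proposition \ref{prop-extl(f)inN} (applied with $I$ in the role of $J$) each $h_{k}$ lies in $\mathscr{N}_{l}(U)$. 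The function $g:=h_{0}+h_{1}I+h_{2}J+h_{3}IJ$ is then left slice regular by the right-module property, and its restriction to $\mathbb{C}_{I}$ is exactly $g_{1}^{+}+g_{1}^{-}I+g_{2}^{+}J+g_{2}^{-}IJ=f_{I}$; since $g$ and $f$ are two left slice regular functions on the axially symmetric slice domain $U$ that agree on the slice $\mathbb{C}_{I}$, the Representation Formula (Proposition \ref{prop-RF}, taken with that slice) gives $g=f$ on all of $U$. This is the required decomposition $f=h_{0}+h_{1}I+h_{2}J+h_{3}IJ$ with $h_{k}\in\mathscr{N}_{l}(U)$.

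Finally, for directness suppose $h_{0}+h_{1}I+h_{2}J+h_{3}IJ=0$ with each $h_{k}\in\mathscr{N}_{l}(U)$. On $U\cap\mathbb{R}$ a slice preserving function takes values in $\bigcap_{K\in\mathbb{S}}\mathbb{C}_{K}=\mathbb{R}$, so the $\mathbb{R}$-linear independence of $\{1,I,J,IJ\}$ yields $h_{k}\equiv0$ on $U\cap\mathbb{R}$; as $U\cap\mathbb{R}$ is a nonempty open subset of the slice $\mathbb{C}_{I}$, the Identity Principle for slice regular functions (or, equivalently, the Representation Formula once $h_{k}$ is known to vanish on $\mathbb{C}_{I}\cap U$) gives $h_{k}\equiv0$ on $U$. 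I expect the only genuinely delicate points to be of a bookkeeping nature: keeping the left action of $\overline{\partial_{I}}$ separate from the right multiplications by $I,J,IJ$, exploiting that $\mathbb{C}_{I}$-valued functions commute with $I$, and invoking the right-$\mathbb{H}$-module structure so that the reassembled sum $h_{0}+h_{1}I+h_{2}J+h_{3}IJ$ is again left slice regular.
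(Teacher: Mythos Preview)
The paper does not supply its own proof of Proposition~\ref{prop-NE}; it merely quotes the result from \cite{colombo2016entire,ghiloni2013continous}. Your argument is correct and is in fact the standard one found in those references: restrict to a fixed slice, use the symplectic splitting $\mathbb{H}=\mathbb{C}_{I}\oplus\mathbb{C}_{I}J$ to get two $\mathbb{C}_{I}$-valued holomorphic pieces, apply the Schwarz-reflection decomposition $g=g^{+}+g^{-}I$ to each, extend by $\mathbf{ext}_{l}$, and invoke the uniqueness coming from the Representation Formula.

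One small slip in the directness step: $U\cap\mathbb{R}$ is \emph{not} an open subset of the slice $\mathbb{C}_{I}$; it is merely open in $\mathbb{R}$. What you actually need (and have) is that $U\cap\mathbb{R}$ is nonempty and open in $\mathbb{R}$, hence contains accumulation points inside $U_{I}$, so the classical Identity Principle on the slice $\mathbb{C}_{I}$ forces $h_{k}|_{U_{I}}\equiv 0$, and then the Representation Formula propagates this to all of $U$. With that phrasing corrected, the proof is complete.
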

	\begin{definition}[\cite{ell2014quaternion}]\label{def-1DQFT}
			Let $F:\mathbb{R}\to\mathbb{H}$ be a quaternion-valued function with $F\in L^1(\mathbb{R})$ and $I\in\mathbb{S}$. The one-dimensional left-sided quaternion Fourier transform $\mathscr{F}_I(F)$ of $F$ is defined by
			\begin{equation*}
				\mathscr{F}_{I}\left(F\right)(t):=\frac{1}{\sqrt{2\pi}}\int_{\mathbb{R}}e^{-Ixt}F(x)\mathrm{d}x.
		\end{equation*}	
	\end{definition}
		 
		\begin{prop}\label{prop-FTf|R}
			Let $f\in\mathscr{N}_{l}(\mathbb{H})$ and $f|_{\mathbb{R}}\in L^{1}(\mathbb{R})$. Then for every $I,J\in\mathbb{S}$ and $t\in\mathbb{R}$, we have
			\begin{enumerate}
                \item $\mathscr{F}_{I}\left(f|_{\mathbb{R}}\right)(t)$ is $\mathbb{C}_{I}$-valued,
				\item $\overline{\mathscr{F}_{I}\left(f|_{\mathbb{R}}\right)(t)}=\mathscr{F}_{I}\left(f|_{\mathbb{R}}\right)(-t)$,
				\item $\mathscr{F}_{J}\left(f|_{\mathbb{R}}\right)(t)=\frac{1}{2}\left[\mathscr{F}_{I}\left(f|_{\mathbb{R}}\right)(t)+\mathscr{F}_{I}\left(f|_{\mathbb{R}}\right)(-t)\right]+\frac{JI}{2}\left[\mathscr{F}_{I}\left(f|_{\mathbb{R}}\right)(-t)-\mathscr{F}_{I}\left(f|_{\mathbb{R}}\right)(t)\right]$.
			\end{enumerate}
		\end{prop}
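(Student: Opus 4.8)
The key observation is that the hypothesis $f\in\mathscr{N}_{l}(\mathbb{H})$ forces $f|_{\mathbb{R}}$ to be real-valued. By Proposition \ref{prop-N_l}, $f(\overline{\mathbf{q}})=\overline{f(\mathbf{q})}$ for every $\mathbf{q}\in\mathbb{H}$; specializing to $\mathbf{q}=x\in\mathbb{R}$ gives $f(x)=f(\overline{x})=\overline{f(x)}$, so $f(x)\in\mathbb{R}$. Everything then follows by direct computation from Definition \ref{def-1DQFT}, the only point requiring care being quaternionic noncommutativity (in particular $\overline{ab}=\overline{b}\,\overline{a}$, and the fact that the scalars $\cos(xt),\sin(xt)$ commute with $I,J$ whereas a general quaternion need not). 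The assumption $f|_{\mathbb{R}}\in L^{1}(\mathbb{R})$ legitimizes moving conjugation and the constant units $I,J$ in and out of the integral.

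For (1): since $f(x)\in\mathbb{R}$ and $e^{-Ixt}=\cos(xt)-I\sin(xt)\in\mathbb{C}_{I}$, the integrand $e^{-Ixt}f(x)$ lies in $\mathbb{C}_{I}$ for all $x,t\in\mathbb{R}$, hence so does $\mathscr{F}_{I}(f|_{\mathbb{R}})(t)$. For (2): conjugating under the integral sign and using $\overline{ab}=\overline{b}\,\overline{a}$, the reality of $f(x)$, and $\overline{e^{-Ixt}}=e^{Ixt}=e^{-Ix(-t)}$, one obtains $\overline{e^{-Ixt}f(x)}=f(x)e^{Ixt}=e^{-Ix(-t)}f(x)$, and integrating gives $\overline{\mathscr{F}_{I}(f|_{\mathbb{R}})(t)}=\mathscr{F}_{I}(f|_{\mathbb{R}})(-t)$.

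For (3): expanding $e^{\pm Ixt}=\cos(xt)\pm I\sin(xt)$ yields $\mathscr{F}_{I}(f|_{\mathbb{R}})(t)+\mathscr{F}_{I}(f|_{\mathbb{R}})(-t)=\frac{2}{\sqrt{2\pi}}\int_{\mathbb{R}}\cos(xt)f(x)\,\mathrm{d}x$ and $\mathscr{F}_{I}(f|_{\mathbb{R}})(-t)-\mathscr{F}_{I}(f|_{\mathbb{R}})(t)=\frac{2I}{\sqrt{2\pi}}\int_{\mathbb{R}}\sin(xt)f(x)\,\mathrm{d}x$, where $I$ may be pulled out of the integral since $\sin(xt)$ is scalar. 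Substituting these into the right-hand side of (3) and using $JI\cdot I=JI^{2}=-J$ collapses it to $\frac{1}{\sqrt{2\pi}}\int_{\mathbb{R}}\bigl(\cos(xt)-J\sin(xt)\bigr)f(x)\,\mathrm{d}x=\frac{1}{\sqrt{2\pi}}\int_{\mathbb{R}}e^{-Jxt}f(x)\,\mathrm{d}x=\mathscr{F}_{J}(f|_{\mathbb{R}})(t)$. I expect no genuine obstacle here; in fact reality of $f|_{\mathbb{R}}$ is not even needed for (3), which is the formal analogue of the representation formula of Proposition \ref{prop-RF} applied to the pair of values $\mathscr{F}_{I}(f|_{\mathbb{R}})(\pm t)$, so the only care needed throughout is tracking which factors are scalar and the order of quaternion products.
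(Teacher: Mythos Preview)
Your proof is correct and follows essentially the same route as the paper's: both first deduce from $f\in\mathscr{N}_{l}(\mathbb{H})$ that $f|_{\mathbb{R}}$ is real-valued, then verify (1)--(3) by expanding $e^{-Ixt}=\cos(xt)-I\sin(xt)$ and tracking which factors are scalar. Your additional observation that (3) in fact holds without the reality of $f|_{\mathbb{R}}$ is also correct, since the $I$ pulled out of $\mathscr{F}_{I}(f|_{\mathbb{R}})(-t)-\mathscr{F}_{I}(f|_{\mathbb{R}})(t)$ sits on the left and is then absorbed by the left-multiplying $JI$.
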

		\begin{proof}
			Since $f\in\mathscr{N}_{l}(\mathbb{H})$, we have $f(\overline{\mathbf{q}})=\overline{f(\mathbf{q})}$ for all $\mathbf{q}\in\mathbb{H}$. Hence, $f(x)=\overline{f(x)}$ for all $x\in\mathbb{R}$, i.e., $f|_{\mathbb{R}}$ is real-valued. By $f|_{\mathbb{R}}\in L^{1}(\mathbb{R})$, for $I\in\mathbb{S}$ and $t\in\mathbb{R}$, we have
			\[\mathscr{F}_{I}\left(f|_{\mathbb{R}}\right)(t)=\frac{1}{\sqrt{2\pi}}\int_{\mathbb{R}}e^{-Ixt}f(x)\mathrm{d}x.\]
            It is easy to verify that $\mathscr{F}_{I}\left(f|_{\mathbb{R}}\right)(t)$ is $\mathbb{C}_{I}$-valued and $\overline{\mathscr{F}_{I}\left(f|_{\mathbb{R}}\right)(t)}=\mathscr{F}_{I}\left(f|_{\mathbb{R}}\right)(-t)$.
			Note that for any fixed $t\in\mathbb{R}$, $e^{-Ixt}=\cos(xt)-I\sin(xt)$
            holds for all $x\in\mathbb{R.}$ Hence, we have that
			\[\frac{1}{2}\left[\mathscr{F}_{I}\left(f|_{\mathbb{R}}\right)(t)+\mathscr{F}_{I}\left(f|_{\mathbb{R}}\right)(-t)\right]=\frac{1}{\sqrt{2\pi}}\int_{\mathbb{R}}\cos(xt)f(x)\mathrm{d}x\]
			and
			\[\frac{-I}{2}\left[\mathscr{F}_{I}\left(f|_{\mathbb{R}}\right)(t)-\mathscr{F}_{I}\left(f|_{\mathbb{R}}\right)(-t)\right]=-\frac{1}{\sqrt{2\pi}}\int_{\mathbb{R}}\sin(xt)f(x)\mathrm{d}x\]
            are real-valued.
            Then, for $J\in\mathbb{S}$, we have
            \begin{align*}
            	\mathscr{F}_{J}\left(f|_{\mathbb{R}}\right)(t)&=\frac{1}{\sqrt{2\pi}}\int_{\mathbb{R}}e^{-Jxt}f(x)\mathrm{d}x\\
            	&=\frac{1}{\sqrt{2\pi}}\int_{\mathbb{R}}\cos(xt)f(x)\mathrm{d}x-J\frac{1}{\sqrt{2\pi}}\int_{\mathbb{R}}\sin(xt)f(x)\mathrm{d}x\\
            	&=\frac{1}{2}\left[\mathscr{F}_{I}\left(f|_{\mathbb{R}}\right)(t)+\mathscr{F}_{I}\left(f|_{\mathbb{R}}\right)(-t)\right]+\frac{JI}{2}\left[\mathscr{F}_{I}\left(f|_{\mathbb{R}}\right)(-t)-\mathscr{F}_{I}\left(f|_{\mathbb{R}}\right)(t)\right].
            \end{align*}
		\end{proof}

        It is easy to show that the conclusion of Proposition \ref{prop-FTf|R} also holds for $f\in\mathscr{N}_{l}(\mathbb{H})$ with $f|_{\mathbb{R}}\in L^{2}(\mathbb{R}).$ Moreover,
		Proposition \ref{prop-FTf|R} will be used to prove Lemma \ref{PW-NQ}.
		\begin{definition}[\cite{de2018quaternionic,sarfatti2013elements}]
			The open right half-space of the quaternions is denoted by $\mathbb{H}_+=\left\{\mathbf{q}=x+Iy:x>0,y\in\mathbb{R},I\in\mathbb{S}\right\}$. We set $\Pi_{+,I}=\mathbb{H}_{+}\cap \mathbb{C}_{I}$. For $1\leq p\leq 2$, we define
			\begin{equation*}
				H^{p}\left(\Pi_{+,I}\right)=\left\{f\in\mathscr{R}_{l}\left(\mathbb{H}_{+}\right);\int_{-\infty}^{+\infty}\lvert F^{I}\left(y\right)\rvert^{p}\mathrm{d}y<\infty\right\},
			\end{equation*}
			where $F^{I}\left(y\right)=f\left(Iy\right)$ denotes the non-tangential value of $f$ at $Iy$. We define $H^p\left(\mathbb{H}_+\right)$ as the space of functions $f\in\mathscr{R}_l\left(\mathbb{H}_+\right)$ such that
			\[\sup_{I\in\mathbb{S}}\int_{-\infty}^{+\infty}|F^{I}\left(y\right)|^p\mathrm{d}y<\infty.\]
	\end{definition}
  \begin{remark}
       If $f\in H^{p}\left(\mathbb{H}_{+}\right)$, $1\leq p\leq2$, then for all $x+Iy\in\mathbb{H}_{+}$ and $J\in\mathbb{S}$, we have 
      \begin{align*}
              f\left(x+Jy\right)&=\frac{1}{2}\left[f\left(x+Iy\right)+f\left(x-Iy\right)\right]+\frac{JI}{2}\left[f\left(x-Iy\right)-f\left(x+Iy\right)\right]\\
          &=\alpha(x,y)+J\beta(x,y),
      \end{align*} 
      where $\alpha$ and $\beta$ are independent of $J$.
      Let $F^{J}\left(y\right)=f\left(Jy\right)$ be the NTBL of $f$ at $Jy$. Then $F^{J}\left(y\right)=\alpha\left(0,y\right)+J\beta\left(0,y\right)$ holds a. e. $y\in\mathbb{R}$. It means that $F^{J}$ depends on $J$. In particular, if $f\in H^{p}\left(\mathbb{H}_{+}\right)\cap\mathscr{N}_{l}(\mathbb{H}_{+})$, we can obtain that $\alpha(0,y)$ and $\beta(0,y)$ are two real-valued functions.
  \end{remark}
    	\begin{prop}\label{prop-FTf_I^+}
    		Let $f\in\mathscr{N}_{l}(\mathbb{H}_{+})\cap H^{1}(\mathbb{H}_{+})$. Then for all $I,J\in\mathbb{S}$, the function $\mathscr{F}_{I}(F^{I})$ is real-valued and  coincides with $\mathscr{F}_{J}(F^{J})$, i.e.,
            \begin{enumerate}
                \item $\overline{\mathscr{F}_{I}(F^{I})(t)}=\mathscr{F}_{I}(F^{I})(t)$;
                \item $\mathscr{F}_{J}(F^{J})(t)=\mathscr{F}_{I}(F^{I})(t).$
            \end{enumerate}
    	\end{prop}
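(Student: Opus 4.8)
The plan is to reduce everything to the real-valued components $\alpha,\beta$ of $f$ provided by Proposition \ref{prop-f=a+Ib} and the remark preceding the statement, and then to exploit the parity of $\alpha(0,\cdot)$ and $\beta(0,\cdot)$. First I would record the structure of $f$ near the boundary: since $f\in\mathscr{N}_{l}(\mathbb{H}_{+})$, Proposition \ref{prop-f=a+Ib} and the discussion following it give $f(x+Iy)=\alpha(x,y)+I\beta(x,y)$ with $\alpha,\beta$ real-valued, independent of $I\in\mathbb{S}$, and satisfying $\alpha(x,y)=\alpha(x,-y)$, $\beta(x,y)=-\beta(x,-y)$. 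Passing to the boundary, the remark before the proposition yields $F^{I}(y)=f(Iy)=\alpha(0,y)+I\beta(0,y)$ for a.e.\ $y$, with $\alpha(0,\cdot)$ even and $\beta(0,\cdot)$ odd. Since $f\in H^{1}(\mathbb{H}_{+})$ we have $F^{I}\in L^{1}(\mathbb{R})$, and because $|F^{I}(y)|^{2}=\alpha(0,y)^{2}+\beta(0,y)^{2}$ the functions $y\mapsto\alpha(0,y)$ and $y\mapsto\beta(0,y)$ are each in $L^{1}(\mathbb{R})$; this makes the integrals below absolutely convergent and legitimizes splitting them.

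Next I would simply substitute into the definition of $\mathscr{F}_{I}$. Using $e^{-Iyt}=\cos(yt)-I\sin(yt)$ for fixed $t$ and expanding $(\cos(yt)-I\sin(yt))(\alpha(0,y)+I\beta(0,y))$, the integrand of $\mathscr{F}_{I}(F^{I})(t)$ splits into a scalar part $\cos(yt)\alpha(0,y)+\sin(yt)\beta(0,y)$ and an $I$-part $\cos(yt)\beta(0,y)-\sin(yt)\alpha(0,y)$. By the parities recorded above, the scalar part is even in $y$ while the $I$-part is odd in $y$, so the latter integrates to $0$ over $\mathbb{R}$. Hence
\[
\mathscr{F}_{I}(F^{I})(t)=\frac{1}{\sqrt{2\pi}}\int_{\mathbb{R}}\bigl[\cos(yt)\alpha(0,y)+\sin(yt)\beta(0,y)\bigr]\,\mathrm{d}y,
\]
which is real-valued, establishing (1).

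Finally, the right-hand side above involves only $\alpha$ and $\beta$, which do not depend on the choice of imaginary unit. Running the identical computation with $J$ in place of $I$ gives exactly the same expression for $\mathscr{F}_{J}(F^{J})(t)$, whence $\mathscr{F}_{J}(F^{J})(t)=\mathscr{F}_{I}(F^{I})(t)$, which is (2). I do not expect a genuine obstacle here; the only point needing a little care is confirming that $\alpha(0,\cdot)$ and $\beta(0,\cdot)$ are bona fide $L^{1}$ functions, so that the Fourier integral and its term-by-term splitting are meaningful, and that is immediate from $f\in H^{1}(\mathbb{H}_{+})$ and the identity $|F^{I}|^{2}=\alpha(0,\cdot)^{2}+\beta(0,\cdot)^{2}$.
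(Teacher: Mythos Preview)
Your proposal is correct and follows essentially the same route as the paper: decompose $F^{I}(y)=\alpha(0,y)+I\beta(0,y)$ with real-valued $\alpha,\beta$ independent of $I$, expand $e^{-Iyt}$ into $\cos$ and $\sin$, and use the parity of $\alpha(0,\cdot)$ and $\beta(0,\cdot)$ to kill the $I$-component of the integral. Your version is in fact a bit more explicit than the paper's in justifying the $L^{1}$-integrability of $\alpha(0,\cdot)$ and $\beta(0,\cdot)$ before splitting the integral.
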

          
    	\begin{proof}
    		Since $f\in\mathscr{N}_{l}(\mathbb{H}_{+})\cap H^1(\mathbb{H}_+)$, for every $I\in\mathbb{S}$, we obtain that $F^{I}\in L^{1}(\mathbb{R})$ and 
            \[F^{I}(y)=\alpha(0,y)+I\beta(0,y),\]
            where $\alpha(0,y)$ and $\beta(0,y)$ are two real-valued functions. It follows that
    		\[\mathscr{F}_{I}(F^{I})(t)=\frac{1}{\sqrt{2\pi}}\int_{\mathbb{R}}e^{-Iyt}F^{I}(y)\mathrm{d}y,\quad t\in\mathbb{R}.\]
    		Note that
    		\begin{align*}
    		    \mathscr{F}_{I}(F^{I})(t)=&\frac{1}{\sqrt{2\pi}}\int_{\mathbb{R}}\left[\cos(yt)\alpha(0,y)+\sin(yt)\beta(0,y)\right]\mathrm{d}y\\
                &+\frac{I}{\sqrt{2\pi}}\int_{\mathbb{R}}\left[\cos(yt)\beta(0,y)-\sin(yt)\alpha(0,y)\right]\mathrm{d}y.
    		\end{align*}
            The second integral vanishes because, for $t$ fixed, the integrand is odd.
    		Hence, for all $J\in\mathbb{S}$,  we have $\mathscr{F}_{J}(F^{J})=\mathscr{F}_{I}(F^{I})$ which is real-valued.
    	\end{proof}
        One can easily show that the conclusion of Proposition \ref{prop-FTf_I^+} also holds for $f\in\mathscr{N}_{l}(\mathbb{H}_{+})\cap H^{2}(\mathbb{H}_{+}).$ Moreover, Proposition \ref{prop-FTf_I^+} is useful to prove Lemma \ref{PW-NH+-HPF}.
        \begin{remark}
            Here, we can show that if $f\in H^{1}(\mathbb{H}_{+})$ and $\tilde{I}\in\mathbb{S}$ then $\mathscr{F}_{\tilde{I}}(F^{\tilde{I}})$ is independent of $\tilde{I}$. Proposition \ref{prop-FTf_I^+} implies that if $f\in\mathscr{N}_{l}(\mathbb{H}_{+})\cap H^{1}(\mathbb{H}_{+})$ and $\tilde{I}\in\mathbb{S}$ then $\mathscr{F}_{\tilde{I}}(F^{\tilde{I}})$ is independent of $\tilde{I}$. By Proposition \ref{prop-NE}, we have that if $f\in\mathscr{R}_{l}(\mathbb{H}_{+})$ and $x+\tilde{I}y\in\mathbb{H}_{+}$ then there exist four slice preserving functions $f_{0},f_{1},f_{2}$ and $f_{3}$ such that
            \[f(x+\tilde{I}y)=f_{0}(x+\tilde{I}y)+f_{1}(x+\tilde{I}y)I+f_{2}(x+\tilde{I}y)J+f_{3}(x+\tilde{I}y)IJ,\]
            where $I,J$ are mutually orthogonal quaternionic imaginary units. It is easy to see that if $f\in H^{1}(\mathbb{H}_{+})$ then $f_{m}\in\mathscr{N}_{l}(\mathbb{H}_{+})\cap H^{1}(\mathbb{H}_{+})$, $0\leq m\leq3$. It follows that
            \[\mathscr{F}_{\tilde{I}}(F^{\tilde{I}})(t)=\mathscr{F}_{\tilde{I}}(F_{0}^{\tilde{I}})(t)+\mathscr{F}_{\tilde{I}}(F_{1}^{\tilde{I}})(t)I+\mathscr{F}_{\tilde{I}}(F_{2}^{\tilde{I}})(t)J+\mathscr{F}_{\tilde{I}}(F_{3}^{\tilde{I}})(t)IJ,\]
            where $F_{m}^{\tilde{I}}(y):=f_{m}(\tilde{I}y)$ are the NTBL of $f_{m}$ at $\tilde{I}y$, and $\mathscr{F}_{\tilde{I}}(F_{m}^{\tilde{I}})$ are real-valued functions and independent of $\tilde{I}$, $0\leq m\leq3$. Then we can get that $\mathscr{F}_{\tilde{I}}(F^{\tilde{I}})$ is independent of $\tilde{I}$. 
        \end{remark}
Hence, 
        in the following, we introduce the essential Fourier transform for $f\in H^{1}(\mathbb{H}_{+}),$ which can be also easily extended to $f\in H^p(\mathbb H_+), 1\leq p\leq 2,$ by the standard argument for the classical Fourier transform.
        \begin{definition}\label{essen_fourier}
        Let $f\in H^{1}(\mathbb{H}_{+})$ and let $\left\{1,I,J,IJ\right\}$ be a basis of $\mathbb{H}_{+}$. It means that there exist four functions $f_{0},f_{1},f_{2},f_{3}\in\mathscr{N}_{l}(\mathbb{H}_{+})\cap H^{1}(\mathbb{H}_{+})$ such that $f=f_{0}+f_{1}I+f_{2}J+f_{3}IJ$. For every $\tilde{I}\in\mathbb{S}$, let $F^{\tilde{I}}(y):=f(\tilde{I}y)$ be the NTBL of $f$ at $\tilde{I}y$ and let $F_{m}^{\tilde{I}}(y):=f_{m}(\tilde{I}y)$ be the NTBL of $f_{m}$ at $\tilde{I}y$, $0\leq m\leq3$. We define the essential Fourier transform as 
        \[\mathscr{F}_{E}(f)(t):=\mathscr{F}_{\tilde{I}}(F^{\tilde{I}})(t)=\mathscr{F}_{\tilde{I}}(F_{0}^{\tilde{I}})(t)+\mathscr{F}_{\tilde{I}}(F_{1}^{\tilde{I}})(t)I+\mathscr{F}_{\tilde{I}}(F_{2}^{\tilde{I}})(t)J+\mathscr{F}_{\tilde{I}}(F_{3}^{\tilde{I}})(t)IJ.\]
        \end{definition}

	\begin{prop}[\cite{alpay2016slice}]\label{prop-RK}
		The kernel $\frac{1}{2\pi}k\left(\mathbf{q}_1,\mathbf{q}_2\right)$ is reproducing, i.e., for any $f\in H^2\left(\mathbb{H}_{+}\right)$ and $I\in\mathbb{S}$,
		\[f\left(\mathbf{q}\right)=\int_{\mathbb{R}}\frac{1}{2\pi}k\left(\mathbf{q},Iy\right)F^{I}\left(y\right)\mathrm{d}y,\ \mathbf{q}\in\mathbb{H}_{+},\]
		where $k\left(\mathbf{q}_1,\mathbf{q}_2\right)=\int_{0}^{+\infty}e^{-\mathbf{q}_1t}e^{-\overline{\mathbf{q}_2}t}\mathrm{d}t$ for $\mathbf{q}_1,\mathbf{q}_2\in\mathbb{H}_{+}$.
	\end{prop}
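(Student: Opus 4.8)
The plan is to combine the non-compact Paley--Wiener theorem (Main Result II) with a slice-regularity argument that reduces the verification to a single complex plane $\mathbb{C}_{I}$, where the identity collapses to a classical Fourier/Plancherel computation for the Cauchy kernel of a half-plane.

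First I would check that, for a fixed $I\in\mathbb{S}$, both sides of the asserted equality are left slice regular functions of $\mathbf{q}$ on $\mathbb{H}_{+}$. For the right-hand side this holds because $\mathbf{q}\mapsto e^{-\mathbf{q}t}=\sum_{n\ge 0}\tfrac{(-t)^{n}}{n!}\mathbf{q}^{n}$ is entire left slice regular (Proposition \ref{prop-series}), right multiplication by the constant $e^{-\overline{Iy}\,t}=e^{Iyt}$ preserves left slice regularity, and, using $|e^{-\mathbf{q}t}|=e^{-t\Re\mathbf{q}}$ together with the Representation Formula (Proposition \ref{prop-RF}) applied in the $\mathbf{q}$-variable, one obtains for $\mathbf{q}=x+I'y_{0}\in\mathbb{H}_{+}$ the bound $|k(\mathbf{q},Iy)|\le(x^{2}+(y_{0}-y)^{2})^{-1/2}+(x^{2}+(y_{0}+y)^{2})^{-1/2}$, so that $y\mapsto k(\mathbf{q},Iy)$ lies in $L^{2}(\mathbb{R})$ locally uniformly in $\mathbf{q}$; since $f\in H^{2}(\mathbb{H}_{+})$ forces $F^{I}\in L^{2}(\mathbb{R})$, the integral converges absolutely and defines a left slice regular function of $\mathbf{q}$ (differentiating under the integral sign). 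Since $\mathbb{H}_{+}$ is an axially symmetric slice domain and $\Pi_{+,I}=\mathbb{H}_{+}\cap\mathbb{C}_{I}$ contains the half-line $(0,+\infty)$, two left slice regular functions on $\mathbb{H}_{+}$ that agree on $\Pi_{+,I}$ coincide everywhere (again by Proposition \ref{prop-RF}); hence it suffices to prove the identity for $\mathbf{q}=a+Ib$ with $a>0$.

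On $\mathbb{C}_{I}$ the factors $e^{-(a+Ib)t}$ and $e^{Iyt}$ commute, so $k(a+Ib,Iy)=\int_{0}^{\infty}e^{-(a+I(b-y))t}\,dt=\tfrac{1}{a+I(b-y)}$; recognizing this Cauchy kernel as $\sqrt{2\pi}$ times an inverse $I$-Fourier transform (or by contour integration) shows that its one-dimensional $I$-Fourier transform in $y$ equals $\sqrt{2\pi}\,e^{-(a+Ib)t}\mathbf{1}_{(0,\infty)}(t)$. Writing $F^{I}=G+HJ$ with $G,H:\mathbb{R}\to\mathbb{C}_{I}$ in $L^{2}(\mathbb{R})$ for a fixed $J\perp I$ (so that $\{1,J\}$ is a right $\mathbb{C}_{I}$-basis of $\mathbb{H}$), and applying the classical $L^{2}$ Parseval formula in $\mathbb{C}_{I}\cong\mathbb{C}$ to each of the $\mathbb{C}_{I}$-valued integrals $\int_{\mathbb{R}}\tfrac{1}{a+I(b-y)}G(y)\,dy$ and $\int_{\mathbb{R}}\tfrac{1}{a+I(b-y)}H(y)\,dy$, together with the transform just computed and the substitution $s=-t$, one gets
\[\frac{1}{2\pi}\int_{\mathbb{R}}k(a+Ib,Iy)F^{I}(y)\,dy=\frac{1}{\sqrt{2\pi}}\int_{-\infty}^{0}e^{(a+Ib)s}\big(\mathscr{F}_{I}(G)(s)+\mathscr{F}_{I}(H)(s)J\big)\,ds.\]

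Finally, by Definition \ref{essen_fourier} one has $\mathscr{F}_{E}(f)=\mathscr{F}_{I}(F^{I})=\mathscr{F}_{I}(G)+\mathscr{F}_{I}(H)J$, while Main Result II gives $\operatorname{supp}\mathscr{F}_{E}(f)\subset(-\infty,0]$ and $f(a+Ib)=\tfrac{1}{\sqrt{2\pi}}\int_{-\infty}^{0}e^{(a+Ib)s}\mathscr{F}_{E}(f)(s)\,ds$; comparing with the display above gives $\tfrac{1}{2\pi}\int_{\mathbb{R}}k(a+Ib,Iy)F^{I}(y)\,dy=f(a+Ib)$, and the slice-regularity reduction then yields the identity on all of $\mathbb{H}_{+}$. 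The point I expect to demand the most care is not any single formula but the interchange of integrations: since $f\in H^{2}$ only yields $F^{I}\in L^{2}\setminus L^{1}$, Fubini is unavailable in the naive order $\int_{0}^{\infty}\!\int_{\mathbb{R}}$, so the argument must be routed through Parseval on a single slice $\mathbb{C}_{I}$ — where the non-commutativity of $\mathbb{H}$ disappears and the Cauchy kernel is square-integrable in $y$ — with the explicit Fourier transform of that kernel as a minor secondary step.
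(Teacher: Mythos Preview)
The paper does not give a proof of this proposition; it is quoted from \cite{alpay2016slice} as a known input, and is then \emph{used} (in the paragraph immediately following it, and again in the proof of Lemma~\ref{PW-PICI-Lp}) as a tool on the way to Main Result~II. Your argument therefore contains a genuine circularity: you invoke Main Result~II (Theorem~\ref{PW-H+H-HPSFT}) to establish Proposition~\ref{prop-RK}, but in the paper Theorem~\ref{PW-H+H-HPSFT} is proved through Lemma~\ref{PW-NH+-HPF}, which rests on Lemma~\ref{PW-PICI-Lp}, whose proof explicitly appeals to Proposition~\ref{prop-RK} to get $\operatorname{supp}\mathscr{F}_I(F_{x_0})\subset(-\infty,0]$. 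So within the paper's logical structure your proof assumes what it is trying to show.

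Setting the circularity aside, the slice-regularity reduction to $\mathbf q\in\Pi_{+,I}$ and the Parseval computation on $\mathbb C_I$ are correct, and the latter is exactly the content of the display right after Proposition~\ref{prop-RK} read backwards. To make your approach self-contained you should replace the call to Main Result~II by a direct half-plane argument: after restricting to $\Pi_{+,I}$, write $f_I=g+hJ$ with $g,h:\Pi_{+,I}\to\mathbb C_I$ and observe that both lie in the classical $H^2$ of the half-plane $\Pi_{+,I}$; then the classical Cauchy reproducing formula (equivalently, the classical Paley--Wiener theorem for $H^2$ of a half-plane) applied componentwise gives $f(a+Ib)=\tfrac{1}{2\pi}\int_{\mathbb R}k(a+Ib,Iy)F^I(y)\,\mathrm dy$ directly, and your extension to all of $\mathbb H_+$ via the Representation Formula finishes the job.
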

	For any fixed $I\in\mathbb{S}$, let $f:\Pi_{+,I}\to\mathbb{C}_{I}$. If $f\in H^2\left(\Pi_{+,I}\right)$, then, by the multiplication formula of Fourier transform and Proposition \ref{prop-RK}, we have
	\begin{align*}
		f\left(z\right)&=\int_{\mathbb{R}}\frac{1}{2\pi}k\left(z,Iy\right)F^{I}\left(y\right)\mathrm{d}y\\
		&=\frac{1}{\sqrt{2\pi}}\int_{\mathbb{R}}\mathscr{F}_I\left(\chi_{(-\infty,0]}\left(\cdot\right)e^{z\left(\cdot\right)}\right)\left(t\right)F^{I}\left(t\right)\mathrm{d}t\\
		&=\frac{1}{\sqrt{2\pi}}\int_{-\infty}^{0}e^{zt}\mathscr{F}_{I}\left(F^{I}\right)\left(t\right)\mathrm{d}t,
	\end{align*}
	where $z=x+Iy\in\mathbb{C}_{I}$. It means that $\textbf{supp}\mathscr{F}_{I}\left(F^{I}\right)\subset\left(-\infty,0\right]$.
	\section{Main results}
	Our main results are stated as follows. 
	\subsection{Compact type Paley-Wiener theorem}
	\begin{theorem}\label{PW-QQ2}
		Let $f:\mathbb{H}\to\mathbb{H}$ be a left slice regular function with $f|_\mathbb{R}\in L^2(\mathbb{R})$, i.e.,
		\begin{equation*}
			\int_{-\infty}^{+\infty}|f(x)|^2\mathrm{d}x<\infty,
		\end{equation*}
		and $A>0$ be a positive number. Then the following two conditions are equivalent:
		\begin{enumerate}
			\item[(i)] There exists a constant $C$ such that 
			\begin{equation*}
				|f(\mathbf{q})|\leq Ce^{A|\mathbf{q}|}
			\end{equation*}
			for all $\mathbf{q}\in\mathbb{H}$.
			\item[(ii)] $\operatorname{supp}\mathscr{F}_{I}\left(f|_{\mathbb{R}}\right)\subset\left[-A,A\right]$ for all $I\in\mathbb{S}$.
		\end{enumerate}
		Moreover, if one of the above conditions holds, then we have 
		\begin{equation*}
			f(x+Iy)=\frac{1}{\sqrt{2\pi}}\int_{-A}^{A}e^{I\left(x+Iy\right)t}\mathscr{F}_{I}\left(f|_{\mathbb{R}}\right)(t)\mathrm{d}t.
		\end{equation*}
	\end{theorem}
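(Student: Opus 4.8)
The plan is to reduce the statement, one slice at a time, to the classical (complex-variable) Paley--Wiener theorem, and then to propagate the resulting exponential bound from a single slice to all of $\mathbb{H}$ via Proposition~\ref{prop-fIinBR-finB2R}. Fix $I\in\mathbb{S}$ and pick $J\in\mathbb{S}$ with $IJ=-JI$, so that $\mathbb{H}=\mathbb{C}_{I}\oplus\mathbb{C}_{I}J$ as a real module with $\mathbb{C}_{I}\perp\mathbb{C}_{I}J$. Since $f$ is left slice regular, $\overline{\partial_{I}}(f|_{\mathbb{C}_{I}})=0$; writing $f|_{\mathbb{C}_{I}}=h_{1}+h_{2}J$ with $h_{1},h_{2}$ valued in $\mathbb{C}_{I}$, the relation $\overline{\partial_{I}}h_{1}+(\overline{\partial_{I}}h_{2})J=0$ together with $\mathbb{C}_{I}\cap\mathbb{C}_{I}J=\{0\}$ forces $\overline{\partial_{I}}h_{1}=\overline{\partial_{I}}h_{2}=0$. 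Thus, under the identification $\mathbb{C}_{I}\cong\mathbb{C}$ with $I\leftrightarrow i$, $h_{1}$ and $h_{2}$ are ordinary entire functions whose traces $h_{1}|_{\mathbb{R}},h_{2}|_{\mathbb{R}}$ lie in $L^{2}(\mathbb{R})$, and $|h_{k}(z)|\le|f(z)|$ for $z\in\mathbb{C}_{I}$. (This slicewise splitting is the content of Proposition~\ref{prop-NE} read on $\mathbb{C}_{I}$; equivalently $f=\mathbf{ext}_{l}(h_{1})+\mathbf{ext}_{l}(h_{2})J$ by Proposition~\ref{prop-SRE}.) Since $e^{-Ixt}\in\mathbb{C}_{I}$ while the constant $J$ multiplies on the right, $\mathscr{F}_{I}$ respects the splitting:
\begin{equation*}
\mathscr{F}_{I}(f|_{\mathbb{R}})(t)=\mathscr{F}_{I}(h_{1}|_{\mathbb{R}})(t)+\mathscr{F}_{I}(h_{2}|_{\mathbb{R}})(t)J,
\end{equation*}
with each $\mathscr{F}_{I}(h_{k}|_{\mathbb{R}})$ being the classical Fourier transform of $h_{k}|_{\mathbb{R}}$.

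\textbf{(i)$\Rightarrow$(ii).} If $|f(\mathbf{q})|\le Ce^{A|\mathbf{q}|}$ on $\mathbb{H}$, then $|h_{k}(z)|\le Ce^{A|z|}$ on $\mathbb{C}_{I}$, so each $h_{k}$ is entire of exponential type $A$ with $L^{2}$ trace; the classical Paley--Wiener theorem gives $\operatorname{supp}\mathscr{F}_{I}(h_{k}|_{\mathbb{R}})\subset[-A,A]$, and adding the $\mathbb{C}_{I}$- and $\mathbb{C}_{I}J$-parts yields $\operatorname{supp}\mathscr{F}_{I}(f|_{\mathbb{R}})\subset[-A,A]$. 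As $I$ is arbitrary, (ii) follows.

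\textbf{(ii)$\Rightarrow$(i) and the reconstruction formula.} Fix $I$. Since $\mathscr{F}_{I}(f|_{\mathbb{R}})$ vanishes off $[-A,A]$, projecting onto $\mathbb{C}_{I}$ and $\mathbb{C}_{I}J$ gives $\operatorname{supp}\mathscr{F}_{I}(h_{k}|_{\mathbb{R}})\subset[-A,A]$ for $k=1,2$. By the converse half of the classical theorem, $G_{k}(x+Iy):=\frac{1}{\sqrt{2\pi}}\int_{-A}^{A}e^{I(x+Iy)t}\mathscr{F}_{I}(h_{k}|_{\mathbb{R}})(t)\,\mathrm{d}t$ defines an entire function on $\mathbb{C}_{I}$ of exponential type $A$ that agrees with $h_{k}$ on $\mathbb{R}$, so $h_{k}\equiv G_{k}$ on $\mathbb{C}_{I}$ by the identity theorem. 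Reassembling $f|_{\mathbb{C}_{I}}=h_{1}+h_{2}J$ and factoring out the common kernel gives
\begin{equation*}
f(x+Iy)=\frac{1}{\sqrt{2\pi}}\int_{-A}^{A}e^{I(x+Iy)t}\mathscr{F}_{I}(f|_{\mathbb{R}})(t)\,\mathrm{d}t,
\end{equation*}
which is the asserted formula, valid for every $I$ since (ii) holds for every $I$; moreover $|f(z)|\le|h_{1}(z)|+|h_{2}(z)|\le C'e^{A|z|}$ on $\mathbb{C}_{I}$, with $C'=\frac{2}{\sqrt{2\pi}}\int_{-A}^{A}|\mathscr{F}_{I}(f|_{\mathbb{R}})(t)|\,\mathrm{d}t<\infty$. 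To pass to all of $\mathbb{H}$, fix $\rho>0$ and apply Proposition~\ref{prop-fIinBR-finB2R} with the symmetric compact set $V=\overline{B(0,\rho)}$ and the slice $\mathbb{C}_{I}$: from $f(V\cap\mathbb{C}_{I})\subseteq B(0,2C'e^{A\rho})$ one obtains $f(V)\subseteq B(0,4C'e^{A\rho})$, that is $|f(\mathbf{q})|\le 4C'e^{A|\mathbf{q}|}$ whenever $|\mathbf{q}|=\rho$; letting $\rho$ range over $(0,\infty)$ yields (i).

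I expect the only real obstacle to be bookkeeping with the noncommutativity: making sure $\mathscr{F}_{I}$, the kernel $e^{-Ixt}$, and the right-module decomposition $\mathbb{H}=\mathbb{C}_{I}\oplus\mathbb{C}_{I}J$ interact consistently, and that the two slicewise reconstructions reassemble into precisely $\frac{1}{\sqrt{2\pi}}\int_{-A}^{A}e^{I(x+Iy)t}\mathscr{F}_{I}(f|_{\mathbb{R}})(t)\,\mathrm{d}t$. The sole distinctly quaternionic ingredient is the single application of Proposition~\ref{prop-fIinBR-finB2R}, which spreads the exponential estimate from one complex plane to the entire space; everything else is the classical Paley--Wiener theorem invoked twice on a slice.
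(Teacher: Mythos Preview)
Your argument is correct. Both directions go through, and the application of Proposition~\ref{prop-fIinBR-finB2R} at the end is exactly the right device to promote the slice bound to all of $\mathbb{H}$.

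Your route differs from the paper's in the choice of decomposition. The paper works globally: for (i)$\Rightarrow$(ii) it invokes Proposition~\ref{prop-NE} to write $f=h_0+h_1I+h_2J+h_3IJ$ with each $h_m\in\mathscr{N}_l(\mathbb{H})$, bounds each $h_m$ by $|\alpha|+|\beta|$ via Proposition~\ref{prop-f=a+Ib}, and then appeals to an intermediate Lemma (Lemma~\ref{PW-NQ}, built on Lemma~\ref{PW-CICI} and Proposition~\ref{prop-FTf|R}) tailored to slice-preserving functions. You instead use the two-piece Splitting Lemma on a single slice, $f|_{\mathbb{C}_I}=h_1+h_2J$ with $h_1,h_2:\mathbb{C}_I\to\mathbb{C}_I$ holomorphic, and feed each $h_k$ straight into the classical Paley--Wiener theorem. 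This is shorter and avoids the auxiliary lemmas entirely; the orthogonality $|f|^2=|h_1|^2+|h_2|^2$ gives you the componentwise exponential bound for free, and $\mathscr{F}_I$ commutes with right multiplication by $J$, so the reassembly is immediate. The paper's approach pays off later because the same $\mathscr{N}_l$ machinery is reused verbatim in the non-compact case (Theorem~\ref{PW-H+H-HPSFT}), where the essential Fourier transform $\mathscr{F}_E$ is \emph{defined} through that four-term decomposition. One small quibble: what you call ``Proposition~\ref{prop-NE} read on $\mathbb{C}_I$'' is really the Splitting Lemma, not Proposition~\ref{prop-NE}; the latter is a global statement about $\mathscr{N}_l$ summands and does not reduce to your two-term splitting without further work.
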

	In order to prove Theorem \ref{PW-QQ2}, we need the following results.
	\begin{lemma}\label{PW-CICI}
		For any fixed $I\in\mathbb{S}$, let $\Psi:\mathbb{C}_{I}\to\mathbb{C}_{I}$ be a holomorphic function with $\Psi|_{\mathbb{R}}\in L^2(\mathbb{R})$, and $A>0$ be a positive number. Then the following two conditions are equivalent:
		\begin{enumerate}
			\item[(i)] There exists a constant $C$ such that 
			\begin{equation}\label{PW-CICI-fnorm}
				|\Psi(x+Iy)|\leq Ce^{A|x+Iy|},\ for\ all\ x,y\in\mathbb{R}.
			\end{equation}
			\item[(ii)] $\operatorname{supp}\mathscr{F}_I\left(\Psi|_{\mathbb{R}}\right)\subset\left[-A,A\right]$.
		\end{enumerate}
		Moreover, if one of the above conditions holds, then we have 
		\begin{equation}\label{PW-CICI-HFT}
			\Psi(x+Iy)=\frac{1}{\sqrt{2\pi}}\int_{-A}^{A}e^{I\left(x+Iy\right)t}\mathscr{F}_I\left(\Psi|_{\mathbb{R}}\right)(t)\mathrm{d}t.
		\end{equation}
	\end{lemma}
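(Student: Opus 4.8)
The plan is to reduce Lemma \ref{PW-CICI} to the classical $L^2$ Paley--Wiener theorem on $\mathbb{C}$ (see e.g. \cite[Theorem 19.3]{rudin1987real}) by transporting every object in the statement along the canonical identification of $\mathbb{C}_I$ with $\mathbb{C}$. Concretely, I would fix the map $\phi_I\colon\mathbb{C}_I\to\mathbb{C}$, $\phi_I(x+Iy)=x+iy$, which is an isomorphism of real associative algebras: it is $\mathbb{R}$-linear, multiplicative, an isometry (since $|x+Iy|=\sqrt{x^2+y^2}=|x+iy|$), and it restricts to the identity on $\mathbb{R}$. Being a continuous algebra homomorphism between finite-dimensional algebras, it intertwines exponentials, $\phi_I(e^{a})=e^{\phi_I(a)}$.

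Given $\Psi\colon\mathbb{C}_I\to\mathbb{C}_I$ as in the lemma, set $\psi:=\phi_I\circ\Psi\circ\phi_I^{-1}\colon\mathbb{C}\to\mathbb{C}$. Writing $\Psi=u+Iv$ with $u,v$ real-valued, the assumption $\overline{\partial_I}\Psi=\tfrac12(\partial_x+I\partial_y)\Psi=0$ is exactly the Cauchy--Riemann system $u_x=v_y$, $u_y=-v_x$, so $\psi$ is an entire function in the ordinary sense. Because $\phi_I$ is an isometry fixing $\mathbb{R}$, we have $\Psi|_{\mathbb{R}}\in L^2(\mathbb{R})$ if and only if $\psi|_{\mathbb{R}}\in L^2(\mathbb{R})$ (with equal norms), and $|\Psi(x+Iy)|\le Ce^{A|x+Iy|}$ for all $x,y$ if and only if $|\psi(z)|\le Ce^{A|z|}$ for all $z\in\mathbb{C}$; thus condition \eqref{PW-CICI-fnorm} for $\Psi$ is equivalent to $\psi$ being of exponential type at most $A$. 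Next I would record the intertwining of $\mathscr{F}_I$ with the classical Fourier transform $\mathscr{F}$ (with the same $\tfrac{1}{\sqrt{2\pi}}$ normalization): for $F\colon\mathbb{R}\to\mathbb{C}_I$ and $g:=\phi_I\circ F$, multiplicativity and $\mathbb{R}$-linearity of $\phi_I$ give $\phi_I\bigl(e^{-Ixt}F(x)\bigr)=e^{-ixt}g(x)$, hence $\phi_I\bigl(\mathscr{F}_I(F)(t)\bigr)=\mathscr{F}(g)(t)$; in particular $\operatorname{supp}\mathscr{F}_I(\Psi|_{\mathbb{R}})=\operatorname{supp}\mathscr{F}(\psi|_{\mathbb{R}})$, so (ii) for $\Psi$ is equivalent to $\operatorname{supp}\mathscr{F}(\psi|_{\mathbb{R}})\subset[-A,A]$.

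With this dictionary set up, the equivalence of (i) and (ii) for $\Psi$ is the verbatim image under $\phi_I^{-1}$ of the classical Paley--Wiener theorem applied to the entire function $\psi$. The latter also yields $\psi(z)=\tfrac{1}{\sqrt{2\pi}}\int_{-A}^{A}e^{izt}\mathscr{F}(\psi|_{\mathbb{R}})(t)\,\mathrm{d}t$ for all $z\in\mathbb{C}$; applying $\phi_I^{-1}$ and using $\phi_I^{-1}(e^{izt})=e^{I(x+Iy)t}$ (which holds because $i(x+iy)t$ maps to $I(x+Iy)t$) together with $\phi_I^{-1}\bigl(\mathscr{F}(\psi|_{\mathbb{R}})(t)\bigr)=\mathscr{F}_I(\Psi|_{\mathbb{R}})(t)$ gives \eqref{PW-CICI-HFT}. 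There is no genuine analytic obstacle here: the whole content is the classical theorem, and the only thing requiring care is the bookkeeping — verifying that $\phi_I$ is an algebra isomorphism, that it sends slice-holomorphic functions on $\mathbb{C}_I$ to entire functions and $\mathscr{F}_I$ to $\mathscr{F}$, and that the exponents transform correctly. The one mild subtlety is the interpretation of $e^{I(x+Iy)t}$ in \eqref{PW-CICI-HFT} as the $\mathbb{C}_I$-exponential (equivalently $\phi_I^{-1}(e^{i(x+iy)t})$), so that all products inside the integral stay within the commutative plane $\mathbb{C}_I$ and the manipulations pulled back from $\mathbb{C}$ remain valid.
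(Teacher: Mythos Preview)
Your reduction via the algebra isomorphism $\phi_I\colon\mathbb{C}_I\to\mathbb{C}$ is correct: $\phi_I$ is an $\mathbb{R}$-linear multiplicative isometry fixing $\mathbb{R}$, it carries $\overline{\partial_I}$-holomorphy to ordinary holomorphy, intertwines $\mathscr{F}_I$ with the classical Fourier transform, and preserves exponentials; so all three conditions in the lemma (the exponential bound, the support condition, and the integral representation \eqref{PW-CICI-HFT}) are the $\phi_I^{-1}$-images of their classical counterparts, and invoking \cite[Theorem~19.3]{rudin1987real} closes the argument.

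The paper takes a different, though closely related, route: rather than transporting to $\mathbb{C}$ and citing the classical theorem, it \emph{replays} Rudin's proof line by line inside $\mathbb{C}_I$ --- introducing $\Psi_\epsilon(x)=e^{-\epsilon|x|}\Psi(x)$, the half-line Laplace integrals $\Phi_\theta(w)=\int_{\Gamma_\theta}e^{-wz}\Psi(z)\,\mathrm{d}z$, and the contour-rotation argument that shows $\Phi_0$ and $\Phi_\pi$ agree off $[-A,A]$. Your approach is more economical and makes transparent that the lemma has no analytic content beyond the classical theorem; the paper's version is self-contained and spares the reader the (admittedly routine) verification that every step of the transport is legitimate. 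Neither approach has a gap; they are two packagings of the same underlying fact.
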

	Based on the argument in \cite[Theorem 19.3]{rudin1987real} (see also \cite{LiangPaley2013}), we can immediately obtain Lemma \ref{PW-CICI}. For completeness, we include a brief proof.
	\begin{proof}
		Assume that (i) holds. Put $\Psi_{\epsilon}\left(x\right)=e^{-\epsilon|x|}\Psi\left(x\right)$, for $\epsilon>0$ and $x\in\mathbb{R}$. By the Cauchy-Schwarz inequality, we have
		\begin{align*}
			\int_{-\infty}^{\infty}\lvert \Psi_{\epsilon}\left(x\right)\rvert\mathrm{d}x&=\int_{-\infty}^{\infty}\lvert e^{-\epsilon|x|}\Psi\left(x\right)\rvert\mathrm{d}x\\
			&\leq\left(\int_{-\infty}^{\infty}e^{-2\epsilon|x|}\mathrm{d}x\right)^{1/2}\left(\int_{-\infty}^{\infty}\lvert \Psi\left(x\right)\rvert^{2}\mathrm{d}x\right)^{1/2}\\
			&=\frac{1}{\epsilon}\lVert \Psi|_{\mathbb{R}}\rVert_{L^2(\mathbb{R})}<\infty.
		\end{align*}
		It means that $\Psi_{\epsilon}\left(x\right)\in L^1(\mathbb{R})$. For fixed $I\in\mathbb{S}$, the Fourier transform of $\Psi_{\epsilon}$ is
		\begin{equation*}
			\mathscr{F}_{I}\left(\Psi_{\epsilon}\right)\left(t\right)=\frac{1}{\sqrt{2\pi}}\int_{-\infty}^{\infty}e^{-Ixt}\Psi_{\epsilon}\left(x\right)\mathrm{d}x.
		\end{equation*}
		Since $\lim_{\epsilon\to0}\lVert \Psi_{\epsilon}-\Psi|_{\mathbb{R}}\rVert_{L^2(\mathbb{R})}=0$, using the Plancherel theorem, we get $\lim_{\epsilon\to0}\lVert\mathscr{F}_{I}\left(\Psi_{\epsilon}\right)-\mathscr{F}_{I}\left(\Psi|_{\mathbb{R}}\right)\rVert_{L^2(\mathbb{R})}=0$. It follows that if 
		\begin{equation}\label{PW-LILI-LFToffe=0}
			\lim_{\epsilon\to0}\frac{1}{\sqrt{2\pi}}\int_{-\infty}^{\infty}e^{-Ixt}\Psi_{\epsilon}\left(x\right)\mathrm{d}x=0
		\end{equation}
		holds for $t\in\mathbb{R}$ and $|t|>A$, then $\mathscr{F}_{I}\left(\Psi|_{\mathbb{R}}\right)$ vanishes outside $\left[-A,A\right]$. It is easy to see that
		\begin{equation*}
			\Psi\left(x\right)=\frac{1}{\sqrt{2\pi}}\int_{-A}^{A}e^{Ixt}\mathscr{F}_{I}\left(\Psi|_{\mathbb{R}}\right)\left(t\right)\mathrm{d}t.
		\end{equation*}
		Since each side of \eqref{PW-CICI-HFT} is an entire function, it follows that \eqref{PW-CICI-HFT} holds for every $z=x+Iy\in \mathbb{C}_{I}$. 
		
		Now, one can show that \eqref{PW-LILI-LFToffe=0} holds for $t\in\mathbb{R}$ and $|t|>A$. For each  $\theta\in\mathbb{R}$, we let $\Gamma_{\theta}$ be the  half line defined by
		\begin{equation*}
			\Gamma_{\theta}(s)=se^{I\theta}\quad(0\leq s<\infty)
		\end{equation*}
		and put
		\begin{equation}\label{w-norm}
			\Pi_{\theta}=\left\{w:\Re(we^{I\theta})>A\right\}.
		\end{equation}
		If $w\in\Pi_{\theta}$, we define
		\begin{equation*}
			\Phi_{\theta}(w)=\int_{\Gamma_{\theta}}e^{-wz}\Psi\left(z\right)\mathrm{d}z=e^{I\theta}\int_{0}^{\infty}e^{-wse^{I\theta}}\Psi\left(se^{I\theta}\right)\mathrm{d}s.
		\end{equation*}
		By (\ref{PW-CICI-fnorm}) and (\ref{w-norm}), we have
		
		\begin{equation*}
			|e^{-wz}\Psi\left(z\right)|\leq Ce^{-[\Re(we^{I\theta})-A]s}
		\end{equation*}
		for $z\in\Gamma_{\theta}$.
		It follows that $\Phi_{\theta}$ is holomorphic.
		
		Note that we have
		\begin{equation*}
			\Phi_0(w)=\int_{0}^{\infty}e^{-ws}\Psi\left(s\right)ds,\quad\Re w>A,
		\end{equation*}
		and
		\begin{equation*}
			\Phi_{\pi}(w)=-\int_{-\infty}^{0}e^{-ws}\Psi\left(s\right)ds,\quad \Re w<-A.
		\end{equation*}
		Using $\Psi|_{\mathbb{R}}\in L^2(\mathbb{R})$, Fubini's theorem and Morera's  theorem, we have $\Phi_0$ and $\Phi_{\pi}$ are holomorphic.
		
		A direct computation gives
		\begin{equation}\label{eps}
			\int_{-\infty}^{+\infty}e^{-Ixt}\Psi_{\epsilon}(x)\mathrm{d}x=\Phi_0(\epsilon+It)-\Phi_{\pi}(-\epsilon+It),\quad t\in\mathbb{R}.
		\end{equation}
		Hence, we have to prove that the right side of (\ref{eps}) tends to $0$ as $\epsilon\to0$, if $t>A$ or $t<-A$. 
		
	Recall that $\Phi_{\frac{\pi}{2}}, \Phi_{-\frac{\pi}{2}}$ are holomorphic on $\Pi_{\frac{\pi}{2}}, \Pi_{-\frac{\pi}{2}}$, respectively. Notice that $\pm \epsilon+It\in\Pi_{\frac{\pi}{2}}$ is equivalent to $t< -A$ and $\pm \epsilon + It \in \Pi_{-\frac{\pi}{2}}$ is equivalent to $t>A$. Next we will show that for $t<-A$,
   \[\Phi_{0}(\pm\epsilon+It)=\Phi_{\pi}(\pm\epsilon+It)=\Phi_{\frac{\pi}{2}}(\pm\epsilon+It),\]
   and for $t>A$,
   \[\Phi_{0}(\pm\epsilon+It)=\Phi_{\pi}(\pm\epsilon+It)=\Phi_{-\frac{\pi}{2}}(\pm\epsilon+It).\]
			
			Assume that $\epsilon,|t|>A$. Let $w_{1}=\sqrt{\epsilon^{2}+t^{2}}e^{-I\frac{\pi}{4}},w_{2}=\sqrt{\epsilon^{2}+t^{2}}e^{-I\frac{3\pi}{4}},w_{3}=\sqrt{\epsilon^{2}+t^{2}}e^{I\frac{3\pi}{4}}$ and $w_{4}=\sqrt{\epsilon^{2}+t^{2}}e^{I\frac{\pi}{4}}.$ Then $w_{1}\in\Pi_{0}\cap\Pi_{\frac{\pi}{2}}$, $w_{2}\in\Pi_{\pi}\cap\Pi_{\frac{\pi}{2}}$, $w_{3}\in\Pi_{\pi}\cap\Pi_{-\frac{\pi}{2}}$ and $w_{4}\in\Pi_{0}\cap\Pi_{-\frac{\pi}{2}}$. 
   
   Let $\Gamma_{r,1}=\{z=re^{I\zeta}:0\leq\zeta\leq\frac{\pi}{2}\},\Gamma_{r,2}=\{z=re^{I\zeta}:\frac{3\pi}{4}\leq\zeta\leq\pi\},\Gamma_{r,3}=\{z=re^{I\zeta}:-\pi\leq\zeta\leq-\frac{\pi}{2}\},$ and $\Gamma_{r,4}=\{z=re^{I\zeta},-\frac{\pi}{2}\leq\zeta\leq0\}.$ Since
			\[|e^{-w_{n}z}\Psi\left(z\right)|\leq Ce^{\left(A-\frac{\sqrt{2}}{2}\sqrt{\epsilon^{2}+t^{2}}\right)r},z\in\Gamma_{r,n}, n=1,2,3,4,\]
            and $A-\frac{\sqrt{2}}{2}\sqrt{\epsilon^{2}+t^{2}}<0$, we obtain that
            \[\lim_{r\to\infty}\int_{\Gamma_{r,n}}e^{-w_{n}z}\Psi\left(z\right)\mathrm{d}z=0, n=1,2,3,4.\]
		  Then, by Cauchy's theorem, we have
            \[\Phi_{0}(w_{1})=\Phi_{\frac{\pi}{2}}(w_{1}),\Phi_{\pi}(w_{2})=\Phi_{\frac{\pi}{2}}(w_{2}),\Phi_{\pi}(w_{3})=\Phi_{-\frac{\pi}{2}}(w_{3}), \text{ and }\Phi_{0}(w_{4})=\Phi_{-\frac{\pi}{2}}(w_{4}).\]
            Hence, by the identity principle, we can get
            \[\Phi_{0}(\pm\epsilon+It)=\Phi_{\pi}(\pm\epsilon+It)=\Phi_{\frac{\pi}{2}}(\pm\epsilon+It),t<-A,\]
            and
            \[\Phi_{0}(\pm\epsilon+It)=\Phi_{\pi}(\pm\epsilon+It)=\Phi_{-\frac{\pi}{2}}(\pm\epsilon+It),t>A.\]
            It follows that the right side of (\ref{eps}) tends to $0$ as $\epsilon\to0$, if $t>A$ or $t<-A$.

		Conversely, we assume that (ii) holds. For any fixed $I\in\mathbb{S}$, let
		\begin{equation*}
			\widetilde{\Psi}(x+Iy)=\frac{1}{\sqrt{2\pi}}\int_{-A}^{A}e^{I\left(x+Iy\right)t}\mathscr{F}_{I}\left(\Psi|_{\mathbb{R}}\right)\left(t\right)\mathrm{d}t.
		\end{equation*}
		It is easy to see that $\widetilde{\Psi}(x+Iy)$ is holomorphic on $\mathbb{C}_{I}$. Using the Cauchy–Schwarz inequality and the Plancherel theorem, we get
		\begin{align*}
			\lvert \widetilde{\Psi}(x+Iy)\rvert&=\Big\lvert\frac{1}{\sqrt{2\pi}}\int_{-A}^{A}e^{I\left(x+Iy\right)t}\mathscr{F}_{I}\left(\Psi|_{\mathbb{R}}\right)\left(t\right)\mathrm{d}t\Big\rvert\\
			&\leq\frac{1}{\sqrt{2\pi}}\int_{-A}^{A}\lvert e^{I\left(x+Iy\right)t}\mathscr{F}_{I}\left(\Psi|_{\mathbb{R}}\right)\left(t\right)\rvert\mathrm{d}t\\
			&\leq\frac{1}{\sqrt{2\pi}}\left(\int_{-A}^{A}\Big\lvert e^{I\left(x+Iy\right)t}\Big\rvert^{2}\mathrm{d}t\right)^{\frac{1}{2}}\left(\int_{-A}^{A}\lvert \mathscr{F}_{I}\left(\Psi|_{\mathbb{R}}\right)\left(t\right)\rvert^{2}\mathrm{d}t\right)^{\frac{1}{2}}\\
			&\leq\frac{\sqrt{2A}}{\sqrt{2\pi}}e^{A|x|}\lVert \Psi|_{\mathbb{R}}\rVert_{L^2(\mathbb{R})}\\
			&\leq Ce^{A|x+Iy|}.
		\end{align*}
		Since $\Psi\left(x\right)=\widetilde{\Psi}\left(x\right)$ and both functions are holomorphic in $\mathbb{C}_{I}$, we conclude that $\Psi\left(x+Iy\right)=\widetilde{\Psi}(x+Iy)$. It follows that $|\Psi\left(z\right)|\leq Ce^{A|z|}$ for all $z=x+Iy\in \mathbb{C}_{I}$.
		
		The proof is completed. 
	\end{proof}
	
    \begin{lemma}\label{PW-NQ}
        Suppose $f\in\mathscr{N}_{l}\left(\mathbb{H}\right)$ with $f|_{\mathbb{R}}\in L^2(\mathbb{R})$, and let $A,C>0$ be two positive constants. Then we have the following results:
        \begin{enumerate}
            \item[(i)] Assume that there exists a quaternionic imaginary unit $I\in\mathbb{S}$, such that $\operatorname{supp}\mathscr{F}_{I}\left(f|_{\mathbb{R}}\right)\subset\left[-A,A\right]$. Then for every $J\in\mathbb{S}$ and $x,y\in\mathbb{R}$, $|f(x+Jy)|\leq Ce^{A|x+Jy|}$. 
            \item[(ii)] Assume that there exists a quaternionic imaginary unit $I\in\mathbb{S}$ such that $|f(x+Iy)|\leq Ce^{A|x+Iy|}$ for all $x,y\in\mathbb{R}$. Then, for every $J\in\mathbb{S}$ and $x,y\in\mathbb{R}$, $\operatorname{supp}\mathscr{F}_{J}\left(f|_{\mathbb{R}}\right)\subset\left[-A,A\right]$ and
		\begin{equation*}
			f\left(x+Jy\right)=\frac{1}{\sqrt{2\pi}}\int_{-A}^{A}e^{J\left(x+Jy\right)t}\mathscr{F}_{J}\left(f|_{\mathbb{R}}\right)\left(t\right)\mathrm{d}t.
		\end{equation*}
        \end{enumerate}
    \end{lemma}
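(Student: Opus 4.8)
The plan is to reduce both statements to the one-variable complex Paley--Wiener theorem of Lemma~\ref{PW-CICI}, exploiting two features of slice preserving functions. First, since $f\in\mathscr{N}_{l}(\mathbb{H})$, for every $J\in\mathbb{S}$ the restriction $f_{J}:=f|_{\mathbb{C}_{J}}$ takes values in $\mathbb{C}_{J}$ and satisfies $\overline{\partial_{J}}f_{J}=0$, so, after identifying $\mathbb{C}_{J}$ with $\mathbb{C}$, it is an entire holomorphic function of one complex variable; moreover $f_{J}|_{\mathbb{R}}=f|_{\mathbb{R}}$ is real-valued (Proposition~\ref{prop-N_l}) and lies in $L^{2}(\mathbb{R})$, and by Proposition~\ref{prop-FTf|R}(1) its Fourier transform $\mathscr{F}_{J}(f_{J}|_{\mathbb{R}})$ is $\mathbb{C}_{J}$-valued and coincides with the $\mathscr{F}_{J}(f|_{\mathbb{R}})$ of the statement. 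Second, by the discussion following Proposition~\ref{prop-f=a+Ib}, $\lvert f(x+Jy)\rvert=\sqrt{\alpha(x,y)^{2}+\beta(x,y)^{2}}$ with $\alpha,\beta$ real-valued and independent of $J$, so $\lvert f(x+Jy)\rvert=\lvert f(x+Iy)\rvert$ for all $I,J\in\mathbb{S}$; since also $\lvert x+Jy\rvert=\sqrt{x^{2}+y^{2}}$ for every $J\in\mathbb{S}$, the exponential-type bound $\lvert f(x+Jy)\rvert\le Ce^{A\lvert x+Jy\rvert}$ holds on one slice if and only if it holds on all slices, with the same constant.

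For part (i), I would apply Lemma~\ref{PW-CICI} on the slice $\mathbb{C}_{I}$ with $\Psi:=f_{I}$: from $\operatorname{supp}\mathscr{F}_{I}(f|_{\mathbb{R}})=\operatorname{supp}\mathscr{F}_{I}(f_{I}|_{\mathbb{R}})\subset[-A,A]$ and the implication (ii)$\Rightarrow$(i) of that lemma we obtain a constant $C>0$ with $\lvert f(x+Iy)\rvert=\lvert f_{I}(x+Iy)\rvert\le Ce^{A\lvert x+Iy\rvert}$; the slice-independence of the modulus then upgrades this to $\lvert f(x+Jy)\rvert\le Ce^{A\lvert x+Jy\rvert}$ for every $J\in\mathbb{S}$. (An equivalent route, which is where Proposition~\ref{prop-FTf|R}(3) is convenient, first notes that $\lvert t\rvert>A$ forces $\mathscr{F}_{I}(f|_{\mathbb{R}})(\pm t)=0$, hence $\mathscr{F}_{J}(f|_{\mathbb{R}})(t)=0$ by Proposition~\ref{prop-FTf|R}(3), so $\operatorname{supp}\mathscr{F}_{J}(f|_{\mathbb{R}})\subset[-A,A]$ for every $J$, and then applies Lemma~\ref{PW-CICI} slicewise; here the constant is uniform because in the proof of Lemma~\ref{PW-CICI} it depends only on $\lVert f|_{\mathbb{R}}\rVert_{L^{2}(\mathbb{R})}$.)

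For part (ii), I would run the argument in the opposite direction: the hypothesis $\lvert f(x+Iy)\rvert\le Ce^{A\lvert x+Iy\rvert}$ together with slice-independence of the modulus gives $\lvert f_{J}(x+Jy)\rvert=\lvert f(x+Jy)\rvert\le Ce^{A\lvert x+Jy\rvert}$ for every $J\in\mathbb{S}$. Fixing $J$ and applying the implication (i)$\Rightarrow$(ii) of Lemma~\ref{PW-CICI} to $\Psi:=f_{J}$, together with its ``Moreover'' part, yields $\operatorname{supp}\mathscr{F}_{J}(f|_{\mathbb{R}})\subset[-A,A]$ and
\begin{equation*}
	f(x+Jy)=f_{J}(x+Jy)=\frac{1}{\sqrt{2\pi}}\int_{-A}^{A}e^{J(x+Jy)t}\,\mathscr{F}_{J}(f|_{\mathbb{R}})(t)\,\mathrm{d}t,
\end{equation*}
which is the asserted representation.

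No step is computationally heavy; the whole content is the reduction to Lemma~\ref{PW-CICI}. The points needing care are: (a) making sure the one-dimensional quaternion Fourier transform $\mathscr{F}_{J}(f|_{\mathbb{R}})$ in the statement really is the scalar complex Fourier transform of the $\mathbb{C}_{J}$-valued holomorphic function $f_{J}|_{\mathbb{R}}$ on $\mathbb{C}_{J}$ --- this is exactly where $f\in\mathscr{N}_{l}(\mathbb{H})$, hence $f|_{\mathbb{R}}$ real-valued, and Proposition~\ref{prop-FTf|R} are used --- and (b), in the slicewise route for part (i), that the exponential-type constant can be chosen independently of the slice. I expect (a) to be the main conceptual obstacle, since one must keep straight which Fourier transform is being invoked on which slice.
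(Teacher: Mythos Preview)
Your proposal is correct and follows essentially the same approach as the paper: reduce to Lemma~\ref{PW-CICI} via the slice-independence of $\lvert f(x+Jy)\rvert$ for $f\in\mathscr{N}_{l}(\mathbb{H})$, together with Proposition~\ref{prop-FTf|R}(3) to transfer the support condition between slices. The paper's proof is terser but uses exactly the same two ingredients; your write-up is in fact more careful about the identifications in point~(a).
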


	\begin{proof}
		For a fixed $I\in\mathbb{S}$, since $f\in\mathscr{N}_{l}(\mathbb{H})$, we have $\vert f(x+Jy)\vert=\vert f(x+Iy)\vert$ for any $J\in\mathbb{S}$. Using Lemma \ref{PW-CICI}, we immediately get (i). Moverover, if $\mathscr{F}_I(f|_{\mathbb{R}})$ is supported in $[-A,A]$ then so is $\mathscr{F}_J(f|_{\mathbb{R}})$ for any $J\in\mathbb{S}$, thanks to property (3) in Proposition \ref{prop-FTf|R}. Thus, Lemma \ref{PW-CICI} implies (ii).
	\end{proof}
	
	Now we can prove Theorem \ref{PW-QQ2}.
	\begin{proof}[Proof of Theorem \ref{PW-QQ2}]
		Assume that (i) holds. Since $f\in\mathscr{R}_{l}\left(\mathbb{H}\right)$, by Proposition \ref{prop-NE}, for every $\tilde{I}\in\mathbb{S}$, we have
		\begin{align}\label{PW-QQ2-extf}
			f\left(x+\tilde{I}y\right)=&h_0\left(x+\tilde{I}y\right)+h_1\left(x+\tilde{I}y\right)I+h_2\left(x+\tilde{I}y\right)J+h_3\left(x+\tilde{I}y\right)K,
		\end{align}
		where $h_{m}\in\mathscr{N}_{l}\left(\mathbb{H}\right)$, $0\leq m\leq3$ and $K=IJ$. It means that we can write $h_{m}\left(x+\tilde{I}y\right)=h_{m}^0\left(x+\tilde{I}y\right)+\tilde{I}h_{m}^1\left(x+\tilde{I}y\right)$ where $h_{m}^0$ and $h_{m}^1$ are real-valued functions with $h_{m}^0\left(x-\tilde{I}y\right)=h_{m}^0\left(x+\tilde{I}y\right)$ and $h_{m}^1\left(x-\tilde{I}y\right)=-h_{m}^1\left(x+\tilde{I}y\right)$, $0\leq m\leq3$. It follows that
		\begin{align*}
			&f\left(x+\tilde{I}y\right)\\
			=&\left(h_0^0\left(x+\tilde{I}y\right)+\tilde{I}h_0^1\left(x+\tilde{I}y\right)\right)+\left(h_1^0\left(x+\tilde{I}y\right)+\tilde{I}h_1^1\left(x+\tilde{I}y\right)\right)I\\
			&+\left(h_2^0\left(x+\tilde{I}y\right)+\tilde{I}h_2^1\left(x+\tilde{I}y\right)\right)J+\left(h_3^0\left(x+\tilde{I}y\right)+\tilde{I}h_3^1\left(x+\tilde{I}y\right)\right)K\\
			=&\left(h_0^0\left(x+\tilde{I}y\right)+h_1^0\left(x+\tilde{I}y\right)I+h_2^0\left(x+\tilde{I}y\right)J+h_3^0\left(x+\tilde{I}y\right)K\right)\\
			&+\tilde{I}\left(h_0^1\left(x+\tilde{I}y\right)+h_1^1\left(x+\tilde{I}y\right)I+h_2^1\left(x+\tilde{I}y\right)J+h_3^1\left(x+\tilde{I}y\right)K\right)\\
			=&\alpha(x,y)+\tilde{I}\beta(x,y).
		\end{align*}
		Since $\Big|f\left(x+\tilde{I}y\right)\Big|\leq Ce^{A|x+\tilde{I}y|}$ holds for every $\tilde{I}\in\mathbb{S}$, by Proposition \ref{prop-f=a+Ib} and Proposition \ref{prop-RF}, for $I'\in\mathbb{S}$, we get
		\begin{equation*}
			|\alpha(x,y)|\leq\frac{1}{2}\left[\Big\lvert f\left(x+I'y\right)\Big\rvert+\Big\lvert f\left(x-I'y\right)\Big\rvert\right]\leq Ce^{A|x+I'y|}=Ce^{A|x+\tilde{I}y|}
		\end{equation*}
		and
		\begin{equation*}
			|\beta(x,y)|\leq\frac{1}{2}\left[\Big\lvert f\left(x+I'y\right)\Big\rvert+\Big\lvert f\left(x-I'y\right)\Big\rvert\right]\leq Ce^{A|x+I'y|}=Ce^{A|x+\tilde{I}y|}.
		\end{equation*}
		It is obvious that $\Big|h_{m}^0\left(x+\tilde{I}y\right)\Big|\leq|\alpha(x,y)|\leq Ce^{A|x+\tilde{I}y|}$ and $\Big|h_{m}^1\left(x+\tilde{I}y\right)\Big|\leq|\beta(x,y)|\leq Ce^{A|x+\tilde{I}y|}$, $0\leq m\leq3$. Hence, we have $$\Big|h_{m}\left(x+\tilde{I}y\right)\Big|\leq|\alpha(x,y)|+|\beta(x,y)|\leq C'e^{A|x+\tilde{I}y|},0\leq m\leq3.$$
		Since $f|_{\mathbb{R}}\in L^2(\mathbb{R})$, we have $h_{m}|_{\mathbb{R}}\in L^2(\mathbb{R})$, $0\leq m\leq3$.
		By Lemma \ref{PW-NQ}, one has $\operatorname{supp}\mathscr{F}_{\tilde{I}}\left(h_{m}|_{\mathbb{R}}\right)\subset\left[-A,A\right]$ and
		\begin{equation}\label{PW-QQ2-hl}
			h_{m}\left(x+\tilde{I}y\right)=\frac{1}{\sqrt{2\pi}}\int_{-A}^{A}e^{\tilde{I}\left(x+\tilde{I}y\right)t}\mathscr{F}_{\tilde{I}}(h_{m}|_{\mathbb{R}})(t)\mathrm{d}t,\ m=0,1,2,3.
		\end{equation}
		Combining (\ref{PW-QQ2-extf}) and (\ref{PW-QQ2-hl}), we have
		\begin{align*}
			&f\left(x+\tilde{I}y\right)\\
			=&h_0\left(x+\tilde{I}y\right)+h_1\left(x+\tilde{I}y\right)I+h_2\left(x+\tilde{I}y\right)J+h_3\left(x+\tilde{I}y\right)K\notag\\
			=&\frac{1}{\sqrt{2\pi}}\int_{-A}^{A}e^{\tilde{I}\left(x+\tilde{I}y\right)t}\mathscr{F}_{\tilde{I}}(h_{0}|_{\mathbb{R}})(t)\mathrm{d}t+\frac{1}{\sqrt{2\pi}}\int_{-A}^{A}e^{\tilde{I}\left(x+\tilde{I}y\right)t}\mathscr{F}_{\tilde{I}}(h_{1}|_{\mathbb{R}})(t)\mathrm{d}t I\notag\\
			&+\frac{1}{\sqrt{2\pi}}\int_{-A}^{A}e^{\tilde{I}\left(x+\tilde{I}y\right)t}\mathscr{F}_{\tilde{I}}(h_{2}|_{\mathbb{R}})(t)\mathrm{d}t J+\frac{1}{\sqrt{2\pi}}\int_{-A}^{A}e^{\tilde{I}\left(x+\tilde{I}y\right)t}\mathscr{F}_{\tilde{I}}(h_{3}|_{\mathbb{R}})(t)\mathrm{d}t K\notag\\
			=&\frac{1}{\sqrt{2\pi}}\int_{-A}^{A}e^{\tilde{I}\left(x+\tilde{I}y\right)t}\mathscr{F}_{\tilde{I}}(h_{0}|_{\mathbb{R}}+h_{1}|_{\mathbb{R}}I+h_{2}|_{\mathbb{R}}J+h_{3}|_{\mathbb{R}}K)(t)\mathrm{d}t\notag\\
			=&\frac{1}{\sqrt{2\pi}}\int_{-A}^{A}e^{\tilde{I}\left(x+\tilde{I}y\right)t}\mathscr{F}_{\tilde{I}}(f|_{\mathbb{R}})(t)\mathrm{d}t
		\end{align*}
		and $\operatorname{supp}\mathscr{F}_{\tilde{I}}\left(f|_{\mathbb{R}}\right)\subset\bigcup_{m=0}^{3}\operatorname{supp}\mathscr{F}_{\tilde{I}}(h_{m}|_{\mathbb{R}})\subset\left[-A,A\right]$.
		
		Assume that (ii) holds. For any fixed $\tilde{I}\in\mathbb{S}$, let
		\begin{equation*}
			\Psi\left(x+\tilde{I}y\right)=\frac{1}{\sqrt{2\pi}}\int_{-A}^{A}e^{\tilde{I}\left(x+\tilde{I}y\right)t}\mathscr{F}_{\tilde{I}}\left(f|_{\mathbb{R}}\right)\left(t\right)\mathrm{d}t.
		\end{equation*}
		It is easy to see that $\overline{\partial_{\widetilde{I}}}\Psi\left(x+\tilde{I}y\right)=0$. Using the H$\rm\ddot{o}$lder inequality and the Plancherel theorem, we have
		\begin{align*}
			\lvert \Psi\left(x+\tilde{I}y\right)\rvert&\leq\frac{1}{\sqrt{2\pi}}\int_{-A}^{A}\lvert e^{\tilde{I}\left(x+\tilde{I}y\right)t}\mathscr{F}_{\tilde{I}}\left(f|_{\mathbb{R}}\right)\left(t\right)\rvert\mathrm{d}t\\
			&\leq\frac{1}{\sqrt{2\pi}}\left(\int_{-A}^{A}\lvert e^{\tilde{I}\left(x+\tilde{I}y\right)t}\rvert^{2}\mathrm{d}t\right)^{\frac{1}{2}}\left(\int_{-A}^{A}\lvert \mathscr{F}_{\tilde{I}}\left(f|_{\mathbb{R}}\right)\left(t\right)\rvert^{2}\mathrm{d}t\right)^{\frac{1}{2}}\\
			&\leq\frac{\sqrt{2A}}{\sqrt{2\pi}}e^{A|y|}\lVert f|_{\mathbb{R}}\rVert_{L^2(\mathbb{R})}=Ce^{A|x+\tilde{I}y|}.
		\end{align*}
		Since $\Psi(x)=f(x)$ and both functions are holomorphic in $\mathbb{C}_{\tilde{I}}$, we conclude that $f\left(x+\tilde{I}y\right)=\Psi\left(x+\tilde{I}y\right)$. By Proposition \ref{prop-fIinBR-finB2R}, for every $J\in\mathbb{S}$, we have $$\lvert f\left(x+Jy\right)\rvert\leq C'e^{A|x+\tilde{I}y|}=C'e^{A|x+Jy|}.$$
		
		The proof is completed.
	\end{proof}
	\subsection{Non-compact type Paley-Wiener theorem} 
	Non-compact type Paley-Wiener theorem is stated as follows.
	\begin{theorem}\label{PW-H+H-HPSFT}
		Let $f\in H^{p}\left(\mathbb{H}_{+}\right)$, $1\leq p\leq2$. Then $\operatorname{supp}\mathscr{F}_{E}\left(f\right)\subset\left(-\infty,0\right]$. Moreover, for every $I\in\mathbb{S}$ and $x>0,y\in\mathbb{R}$, 
		\begin{equation*}
			f\left(x+Iy\right)=\frac{1}{\sqrt{2\pi}}\int_{-\infty}^{0}e^{\left(x+Iy\right)t}\mathscr{F}_{E}\left(f\right)\left(t\right)\mathrm{d}t.
		\end{equation*}
	\end{theorem}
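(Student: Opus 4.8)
The plan is to reduce the statement to the slice preserving case, to treat that case slice by slice via the classical half-plane Paley-Wiener theorem, and then to reassemble the pieces. By Proposition~\ref{prop-NE} fix a basis $\{1,I,J,IJ\}$ of $\mathbb{H}$ and write $f=f_0+f_1I+f_2J+f_3IJ$ with $f_m\in\mathscr{N}_l(\mathbb{H}_+)$, $0\le m\le 3$. First I would check that $f\in H^p(\mathbb{H}_+)$ forces $f_m\in\mathscr{N}_l(\mathbb{H}_+)\cap H^p(\mathbb{H}_+)$: exactly as in the proof of Theorem~\ref{PW-QQ2}, writing $f(x+\tilde I y)=\alpha(x,y)+\tilde I\beta(x,y)$ with $\alpha,\beta$ independent of $\tilde I$ (Proposition~\ref{prop-RF}), one has $|\alpha(0,y)|,|\beta(0,y)|\le\frac{1}{2}(|F^{\tilde I}(y)|+|F^{\tilde I}(-y)|)$, and each non-tangential boundary value $F_m^{\tilde I}$ is a coordinate of $\alpha(0,\cdot)+\tilde I\beta(0,\cdot)$ in an orthonormal basis, hence bounded by $|\alpha(0,y)|+|\beta(0,y)|$; since the latter lies in $L^p(\mathbb{R})$, so does $F_m^{\tilde I}$. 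Thus it suffices to prove the theorem for $g\in\mathscr{N}_l(\mathbb{H}_+)\cap H^p(\mathbb{H}_+)$.

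For such a $g$, fix $I\in\mathbb{S}$ and consider $g_I=g|_{\Pi_{+,I}}$, a holomorphic $\mathbb{C}_I$-valued function on the right half-plane $\Pi_{+,I}$ whose non-tangential limit $F^I$ lies in $L^p(\mathbb{R})$; by Proposition~\ref{prop-FTf_I^+}, $\mathscr{F}_I(F^I)$ is real-valued and independent of $I$, and coincides with $\mathscr{F}_E(g)$ by Definition~\ref{essen_fourier}. The crux is then the classical $L^p$ Paley-Wiener theorem for the Hardy space on a half-plane, $1\le p\le 2$, which is the content of Lemma~\ref{PW-NH+-HPF}: for $p=2$ it is precisely the reproducing-kernel computation recorded just before Theorem~\ref{PW-H+H-HPSFT}, based on Proposition~\ref{prop-RK} and the multiplication formula for the Fourier transform; for $1\le p<2$ it follows from the $p=2$ case together with the Hausdorff-Young inequality and a density argument. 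This yields $\operatorname{supp}\mathscr{F}_I(F^I)\subset(-\infty,0]$ and
\[
g(x+Iy)=\frac{1}{\sqrt{2\pi}}\int_{-\infty}^{0}e^{(x+Iy)t}\,\mathscr{F}_I(F^I)(t)\,\mathrm{d}t,\qquad x>0,\ y\in\mathbb{R}.
\]
Because $\mathscr{F}_I(F^I)=\mathscr{F}_E(g)$ does not depend on $I$, this identity holds on every slice $\Pi_{+,I}$ simultaneously; equivalently, the right-hand side is, by Proposition~\ref{prop-SRE} and Proposition~\ref{prop-extl(f)inN}, the slice extension of its holomorphic restriction to a single slice, hence slice preserving on $\mathbb{H}_+$, and since it agrees with $g$ on that slice the two functions coincide on all of $\mathbb{H}_+$.

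Applying the previous paragraph to $g=f_m$, $0\le m\le 3$, and setting $e_0=1$, $e_1=I$, $e_2=J$, $e_3=IJ$, I obtain, for every $\tilde I\in\mathbb{S}$ and $x>0$, $y\in\mathbb{R}$,
\begin{align*}
f(x+\tilde I y)&=\sum_{m=0}^{3}f_m(x+\tilde I y)e_m\\
&=\frac{1}{\sqrt{2\pi}}\int_{-\infty}^{0}e^{(x+\tilde I y)t}\Big(\sum_{m=0}^{3}\mathscr{F}_{\tilde I}(F_m^{\tilde I})(t)e_m\Big)\mathrm{d}t\\
&=\frac{1}{\sqrt{2\pi}}\int_{-\infty}^{0}e^{(x+\tilde I y)t}\,\mathscr{F}_E(f)(t)\,\mathrm{d}t,
\end{align*}
the last equality being Definition~\ref{essen_fourier}; moreover $\operatorname{supp}\mathscr{F}_E(f)\subset\bigcup_{m=0}^{3}\operatorname{supp}\mathscr{F}_{\tilde I}(F_m^{\tilde I})\subset(-\infty,0]$, which is the assertion of the theorem.

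I expect the main obstacle to be the slice preserving case when $1\le p<2$: establishing the half-plane $H^p$ Paley-Wiener statement and, more basically, giving rigorous meaning to $\mathscr{F}_I(F^I)$ and to the integral representation when $F^I\in L^p(\mathbb{R})\setminus L^2(\mathbb{R})$, where the Fourier transform exists only in the Hausdorff-Young sense. By contrast, the decomposition, the verification that the components inherit membership in $H^p(\mathbb{H}_+)$, and the final reassembly are routine and parallel the proof of the compact-type theorem.
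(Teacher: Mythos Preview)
Your proposal is correct and follows essentially the same route as the paper: decompose $f$ via Proposition~\ref{prop-NE} into slice preserving components, show each component inherits $H^p(\mathbb{H}_+)$ membership via the bound $|h_m(x+\tilde Iy)|\le|f(x+I'y)|+|f(x-I'y)|$, apply Lemma~\ref{PW-NH+-HPF} to each component, and reassemble. Your sketch of the slice preserving case (reduce to the $\mathbb{C}_I$-valued half-plane $H^p$ statement, i.e.\ Lemma~\ref{PW-PICI-Lp}) is also in line with the paper, though the paper handles $1\le p<2$ not by a direct density argument but by shifting into the interior to land in $H^2$ (Lemma~\ref{1<cHp}) and then letting the shift tend to zero with Hausdorff--Young controlling the Fourier side; this is precisely the ``main obstacle'' you flagged.
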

	\begin{theorem}\label{PW-H+H-SFTHP}
		Let $F:\mathbb{R}\to\mathbb{H}$ and  $F\in L^{p}(\mathbb{R})$ for $1\leq p\leq2$. Assume that there exists a quaternionic imaginary unit $I\in\mathbb{S}$ such that $\operatorname{supp}\mathscr{F}_{I}\left(F\right)\subset\left(-\infty,0\right]$. Then there exists a function $f\in H^p(\mathbb{H}_+)$, whose non tangential boundary boundary limit at $Iy$ is $F(y)$, whence $\mathscr{F}_{E}\left(f\right)=\mathscr{F}_{I}\left(F\right)$ and
        \begin{equation*}
			f\left(x+Jy\right):=\frac{1}{\sqrt{2\pi}}\int_{-\infty}^{0}e^{\left(x+Jy\right)t}\mathscr{F}_{I}\left(F\right)\left(t\right)\mathrm{d}t,
		\end{equation*}
        for all $x>0,y\in\mathbb{R}$, $J\in\mathbb{S}$.
	\end{theorem}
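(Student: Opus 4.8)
The plan is to solve the problem first on the single slice $\mathbb{C}_I$, where the classical complex Paley--Wiener theorem is available, and then to propagate the solution to all of $\mathbb{H}_+$ by the extension operator $\mathbf{ext}_l$ of Proposition~\ref{prop-SRE}. Fix $J\in\mathbb{S}$ with $J\perp I$ and write $\mathbb{H}=\mathbb{C}_I\oplus\mathbb{C}_I J$, so $F=F_1+F_2 J$ with $\mathbb{C}_I$-valued $F_1,F_2\in L^p(\mathbb{R})$; since $e^{-Ixt}$ commutes with $\mathbb{C}_I$, we have $\mathscr{F}_I(F)=\mathscr{F}_I(F_1)+\mathscr{F}_I(F_2)J$, each $\mathscr{F}_I(F_m)$ being an ordinary (complex) Fourier transform supported in $(-\infty,0]$. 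For $z=x+Iy\in\Pi_{+,I}$ the integrand of
\[
g(z):=\frac{1}{\sqrt{2\pi}}\int_{-\infty}^{0}e^{zt}\,\mathscr{F}_I(F)(t)\,\mathrm{d}t
\]
is dominated by $e^{xt}\,\lvert\mathscr{F}_I(F)(t)\rvert$; since $t\mapsto e^{xt}\in L^{p}(-\infty,0]$ and $\mathscr{F}_I(F)\in L^{p/(p-1)}(\mathbb{R})$ by the Hausdorff--Young inequality, Hölder's inequality shows the integral converges absolutely, and differentiating under the integral (legitimate on each half-plane $\{\Re z>\delta>0\}$) gives $\overline{\partial_I}g=0$. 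Writing $g=g_1+g_2 J$ accordingly, the classical $L^p$ Paley--Wiener theorem on a half-plane ($1\le p\le 2$; see \cite{qian2005characterization,qian2009fourier} and the references in the introduction) shows that each $g_m$ has non-tangential boundary limit $F_m$ almost everywhere on $\mathbb{R}$; hence $g$ has NTBL $F$ a.e.

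Next I extend $g$. The right half-space $\mathbb{H}_+$ is an axially symmetric slice domain with $\mathbb{H}_+\cap\mathbb{R}=(0,\infty)\neq\varnothing$ and $\mathbb{H}_+\cap\mathbb{C}_I=\Pi_{+,I}$, a domain of $\mathbb{C}_I$ symmetric about the real axis; so Proposition~\ref{prop-SRE} applies and
\[
f(x+Ky):=\mathbf{ext}_l(g)(x+Ky)=\tfrac12\big[g(x-Iy)+g(x+Iy)\big]+\tfrac12\,KI\big[g(x-Iy)-g(x+Iy)\big]
\]
is the unique left slice regular extension of $g$ to $\mathbb{H}_+$. Substituting the integral formula for $g$ and using, for each fixed $t$, the identity
\[
\tfrac12\big(e^{(x-Iy)t}+e^{(x+Iy)t}\big)+\tfrac12\,KI\big(e^{(x-Iy)t}-e^{(x+Iy)t}\big)=e^{xt}\big(\cos(yt)+K\sin(yt)\big)=e^{(x+Ky)t},
\]
one checks that $f$ coincides with the function displayed in the statement, which is therefore well defined and slice regular on $\mathbb{H}_+$.

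Finally I identify the boundary data and the Fourier transform. Letting $x\to 0^+$ in the formula for $f$, and using that $g$ has NTBL $F$ a.e. on $\mathbb{R}$ (so that $g(x-Iy)\to F(-y)$ a.e.), one gets, for every $K\in\mathbb{S}$,
\[
F^{K}(y)=\tfrac12\big[F(-y)+F(y)\big]+\tfrac12\,KI\big[F(-y)-F(y)\big]\qquad\text{for a.e. }y\in\mathbb{R}.
\]
Taking $K=I$ and using $I^2=-1$ collapses the right-hand side to $F(y)$, so the non-tangential boundary limit of $f$ at $Iy$ is $F$. For general $K$ one has $\lvert F^{K}(y)\rvert\le\lvert F(y)\rvert+\lvert F(-y)\rvert$, whence $\int_{\mathbb{R}}\lvert F^{K}(y)\rvert^{p}\,\mathrm{d}y\le 2^{p}\lVert F\rVert_{L^{p}}^{p}$ uniformly in $K$, so $f\in H^{p}(\mathbb{H}_+)$. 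Then by Definition~\ref{essen_fourier} and the $I$-independence of the essential Fourier transform established in the remark preceding it, $\mathscr{F}_E(f)=\mathscr{F}_I(F^{I})=\mathscr{F}_I(F)$.

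I expect the only genuine difficulty to be in the first paragraph, namely the classical $L^p$ Paley--Wiener theorem on a half-plane for $1\le p<2$: with Plancherel's identity unavailable one has to argue via Hausdorff--Young together with the Cauchy/Poisson integral and the $L^p$-mapping properties of the boundary correspondence, and the endpoint $p=1$ needs the usual extra care. The remaining, slice-regular part of the proof---transporting the slice solution to $\mathbb{H}_+$ via $\mathbf{ext}_l$, verifying that the explicit integral formula survives, and checking membership in $H^p(\mathbb{H}_+)$ through the Representation Formula (Proposition~\ref{prop-RF})---is then routine.
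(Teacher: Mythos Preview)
Your proposal is correct and follows essentially the same route as the paper: decompose $F$ along $\mathbb{H}=\mathbb{C}_I\oplus\mathbb{C}_I J$, apply the classical half-plane Paley--Wiener on $\Pi_{+,I}$ (which the paper isolates as Lemma~\ref{PW-PICI-Lp}), extend to $\mathbb{H}_+$ via $\mathbf{ext}_l$, and bound $\lVert F^{K}\rVert_{L^p}$ uniformly in $K$ exactly as you do. The only cosmetic difference is that you verify the integral formula for $f$ directly from the exponential identity for $e^{(x+Ky)t}$, whereas the paper instead invokes the already-proved necessary direction (Theorem~\ref{PW-H+H-HPSFT}) at the end to recover it.
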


	In order to prove Theorem \ref{PW-H+H-HPSFT} and Theorem \ref{PW-H+H-SFTHP}, we need the following results.
	\begin{lemma}\label{1<cHp}
		For any fixed $I\in\mathbb{S}$, let $\Psi:\Pi_{+,I}\to\mathbb{C}_{I}$. Suppose $\Psi\in H^p\left(\Pi_{+,I}\right),0<p\leq\infty$, and $x_{0}>0$. Let  $\Pi_{+,I,x_0}=\left\{x_0+z:z\in\Pi_{+,I}\right\}\subset\Pi_{+,I}$, then there exists a constant $C=C\left(x_0,p\right)$, depending on $\left(x_0,p\right)$ but not on $\Psi$, such that
		\[\sup_{z\in\Pi_{+,I,x_0}}|\Psi\left(z\right)|\leq C\lVert \Psi\rVert_{H^p}.\]
	\end{lemma}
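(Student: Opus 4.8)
The plan is to reduce everything to the subharmonicity of $\lvert\Psi\rvert^{p}$ together with the sub--mean value inequality, comparing the mean of $\lvert\Psi\rvert^{p}$ over a disc centred at a point of $\Pi_{+,I,x_{0}}$ against its integral over a horizontal strip, where the $H^{p}$-norm takes over. I write points of $\mathbb{C}_{I}$ as $s+It$ with $s,t\in\mathbb{R}$, so that $\Pi_{+,I}=\{s+It:s>0\}$ and $\Pi_{+,I,x_{0}}=\{s+It:s>x_{0}\}$; here $H^{p}(\Pi_{+,I})$ is the classical Hardy space of this complex half-plane, with $\lVert\Psi\rVert_{H^{p}}^{p}=\sup_{s>0}\int_{\mathbb{R}}\lvert\Psi(s+It)\rvert^{p}\,\mathrm{d}t$ for $0<p<\infty$ and $\lVert\Psi\rVert_{H^{\infty}}=\sup_{\Pi_{+,I}}\lvert\Psi\rvert$.

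The case $p=\infty$ is immediate, since $\sup_{\Pi_{+,I,x_{0}}}\lvert\Psi\rvert\le\sup_{\Pi_{+,I}}\lvert\Psi\rvert=\lVert\Psi\rVert_{H^{\infty}}$, so $C=1$ works. Assume now $0<p<\infty$. Since $\Psi$ is holomorphic on $\Pi_{+,I}$, the function $\log\lvert\Psi\rvert$ is subharmonic there, hence $\lvert\Psi\rvert^{p}=\exp(p\log\lvert\Psi\rvert)$ — being the composition of a subharmonic function with the convex increasing map $\tau\mapsto e^{p\tau}$ — is subharmonic on $\Pi_{+,I}$ as well, with no smoothness of $\lvert\Psi\rvert$ at the zeros of $\Psi$ required. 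Fix $w=s_{0}+It_{0}\in\Pi_{+,I,x_{0}}$, so $s_{0}>x_{0}>0$. For every $r$ with $0<r<s_{0}$ the closed disc $\overline{D(w,r)}$ lies in $\Pi_{+,I}$, so the sub--mean value inequality gives
\[
\lvert\Psi(w)\rvert^{p}\le\frac{1}{\pi r^{2}}\int_{D(w,r)}\lvert\Psi(s+It)\rvert^{p}\,\mathrm{d}s\,\mathrm{d}t,
\]
and since $D(w,r)\subset\{s+It:s_{0}-r<s<s_{0}+r,\ t\in\mathbb{R}\}$ I would estimate
\[
\frac{1}{\pi r^{2}}\int_{D(w,r)}\lvert\Psi(s+It)\rvert^{p}\,\mathrm{d}s\,\mathrm{d}t\le\frac{1}{\pi r^{2}}\int_{s_{0}-r}^{s_{0}+r}\Big(\int_{\mathbb{R}}\lvert\Psi(s+It)\rvert^{p}\,\mathrm{d}t\Big)\mathrm{d}s\le\frac{2r}{\pi r^{2}}\lVert\Psi\rVert_{H^{p}}^{p}=\frac{2}{\pi r}\lVert\Psi\rVert_{H^{p}}^{p}.
\]
Letting $r\to s_{0}^{-}$ yields $\lvert\Psi(w)\rvert^{p}\le\frac{2}{\pi s_{0}}\lVert\Psi\rVert_{H^{p}}^{p}\le\frac{2}{\pi x_{0}}\lVert\Psi\rVert_{H^{p}}^{p}$; taking the supremum over $w\in\Pi_{+,I,x_{0}}$ and then the $p$-th root gives the claim with $C(x_{0},p)=(2/(\pi x_{0}))^{1/p}$.

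I do not expect a serious obstacle here: this is just the half-plane form of the classical pointwise bound $\lvert f(x+\mathrm{i}y)\rvert\le(c/y)^{1/p}\lVert f\rVert_{H^{p}}$ for the upper half-plane. The only points deserving attention are recording that $\lvert\Psi\rvert^{p}$ is subharmonic across the zeros of $\Psi$ (handled by the composition argument) and that the vertical-line integrals are uniformly bounded by $\lVert\Psi\rVert_{H^{p}}^{p}$, which is precisely the defining property of $H^{p}(\Pi_{+,I})$ and is exactly what makes the strip estimate close.
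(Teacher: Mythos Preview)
Your proof is correct: the subharmonicity of $\lvert\Psi\rvert^{p}$ together with the area sub--mean value inequality and comparison with the strip integral is exactly the standard argument for this half-plane pointwise bound. The paper does not spell out a proof but simply refers to \cite[Proposition~1.3]{bekolle2003lecture}, which uses the same mechanism; your write-up is a faithful expansion of that reference (you could simplify slightly by taking $r=x_{0}$ directly instead of passing to the limit $r\to s_{0}^{-}$, but this is cosmetic).
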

	\begin{proof}
		The proof is similar to that given in \cite[Proposition 1.3]{bekolle2003lecture}.
	\end{proof}
	\begin{lemma}\label{FTLp}
		For any fixed $I\in\mathbb{S}$, let $\Psi,\Phi:\mathbb{C}_{I}\to\mathbb{C}_{I}$. If $\Psi|_{\mathbb{R}}\in L^p(\mathbb{R})$, $\Phi|_{\mathbb{R}}\in L^p(\mathbb{R})\cap L^1(\mathbb{R})$, $1\leq p\leq2$, then
		\begin{equation*}
			\int_{\mathbb{R}}\Psi\left(t\right)\mathscr{F}_I\left(\Phi|_{\mathbb{R}}\right)\left(t\right)\mathrm{d}t=\int_{\mathbb{R}}\mathscr{F}_I\left(\Psi|_{\mathbb{R}}\right)\left(t\right)\Phi\left(t\right)\mathrm{d}t.
		\end{equation*}
	\end{lemma}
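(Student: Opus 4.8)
The plan is to reduce the identity to the commutative setting and then imitate the classical proof of the multiplication formula. First I would observe that, since $\Psi$ and $\Phi$ take values in $\mathbb{C}_{I}$ and $e^{-Ixt}\in\mathbb{C}_{I}$ for all $x,t\in\mathbb{R}$, every quantity occurring in the statement lies in the field $\mathbb{C}_{I}$, which is commutative and isometrically isomorphic to $\mathbb{C}$ via $x+Iy\mapsto x+iy$. Under this identification $\mathscr{F}_{I}$ becomes the classical Fourier transform, so in particular $\mathscr{F}_{I}$ extends to a bounded operator $L^{p}(\mathbb{R})\to L^{p'}(\mathbb{R})$ with $p'=p/(p-1)$ for $1\le p\le 2$ (Hausdorff--Young), and on $L^{1}(\mathbb{R})\cap L^{p}(\mathbb{R})$ this extension agrees with the absolutely convergent integral of Definition \ref{def-1DQFT}. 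With these remarks, the noncommutativity of $\mathbb{H}$ plays no role, and the argument splits into the case $p=1$ and the case $1<p\le 2$.

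For $p=1$ both $\Psi|_{\mathbb{R}}$ and $\Phi|_{\mathbb{R}}$ lie in $L^{1}(\mathbb{R})$, so $\mathscr{F}_{I}(\Phi|_{\mathbb{R}})$ and $\mathscr{F}_{I}(\Psi|_{\mathbb{R}})$ are given by convergent integrals and
\[
\iint_{\mathbb{R}^{2}}\lvert\Psi(t)\rvert\,\lvert\Phi(s)\rvert\,\mathrm{d}s\,\mathrm{d}t=\lVert\Psi|_{\mathbb{R}}\rVert_{L^{1}(\mathbb{R})}\,\lVert\Phi|_{\mathbb{R}}\rVert_{L^{1}(\mathbb{R})}<\infty .
\]
Hence Fubini's theorem applies to
\[
\int_{\mathbb{R}}\Psi(t)\,\mathscr{F}_{I}(\Phi|_{\mathbb{R}})(t)\,\mathrm{d}t=\frac{1}{\sqrt{2\pi}}\iint_{\mathbb{R}^{2}}\Psi(t)\,e^{-Ist}\,\Phi(s)\,\mathrm{d}s\,\mathrm{d}t ,
\]
and since $\Psi(t)$, $e^{-Ist}$ and $\Phi(s)$ all lie in the commutative field $\mathbb{C}_{I}$, I may rewrite the integrand as $e^{-Ist}\Psi(t)\Phi(s)$, integrate in $t$ first, and obtain $\int_{\mathbb{R}}\mathscr{F}_{I}(\Psi|_{\mathbb{R}})(s)\,\Phi(s)\,\mathrm{d}s$. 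This settles $p=1$.

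For $1<p\le 2$ I would argue by density. Put $\Psi_{n}:=\Psi|_{\mathbb{R}}\,\chi_{[-n,n]}$; then $\Psi_{n}\in L^{1}(\mathbb{R})\cap L^{p}(\mathbb{R})$ and $\Psi_{n}\to\Psi|_{\mathbb{R}}$ in $L^{p}(\mathbb{R})$, so by Hausdorff--Young $\mathscr{F}_{I}(\Psi_{n})\to\mathscr{F}_{I}(\Psi|_{\mathbb{R}})$ in $L^{p'}(\mathbb{R})$. Applying the case $p=1$ to the pair $(\Psi_{n},\Phi|_{\mathbb{R}})$ (both in $L^{1}(\mathbb{R})$) gives
\[
\int_{\mathbb{R}}\Psi_{n}(t)\,\mathscr{F}_{I}(\Phi|_{\mathbb{R}})(t)\,\mathrm{d}t=\int_{\mathbb{R}}\mathscr{F}_{I}(\Psi_{n})(t)\,\Phi(t)\,\mathrm{d}t
\]
for every $n$. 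Now let $n\to\infty$. Since $\Phi|_{\mathbb{R}}\in L^{p}(\mathbb{R})$, Hausdorff--Young gives $\mathscr{F}_{I}(\Phi|_{\mathbb{R}})\in L^{p'}(\mathbb{R})$, so Hölder's inequality shows the left-hand side converges to $\int_{\mathbb{R}}\Psi(t)\,\mathscr{F}_{I}(\Phi|_{\mathbb{R}})(t)\,\mathrm{d}t$; and since $\Phi|_{\mathbb{R}}\in L^{p}(\mathbb{R})$, Hölder's inequality together with $\mathscr{F}_{I}(\Psi_{n})\to\mathscr{F}_{I}(\Psi|_{\mathbb{R}})$ in $L^{p'}(\mathbb{R})$ shows the right-hand side converges to $\int_{\mathbb{R}}\mathscr{F}_{I}(\Psi|_{\mathbb{R}})(t)\,\Phi(t)\,\mathrm{d}t$. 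This gives the identity for all $1\le p\le 2$.

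The only real obstacle is the range $p>1$, where $\mathscr{F}_{I}(\Psi|_{\mathbb{R}})$ is not given by a convergent integral but only as the Hausdorff--Young extension, so a single application of Fubini is unavailable; the point is to truncate $\Psi$, invoke the $L^{1}$ identity, and pass to the limit, checking that the two pairings are continuous by Hölder's inequality with the conjugate exponents $p$ and $p'$. Everything else is bookkeeping, and the potential noncommutativity of $\mathbb{H}$ is harmless precisely because all functions involved are $\mathbb{C}_{I}$-valued.
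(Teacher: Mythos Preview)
Your proof is correct. The paper itself does not spell out an argument for this lemma but simply writes ``The proof is similar to that given in \cite{li2018fourier}''; your Fubini-for-$L^1$ followed by truncation, Hausdorff--Young, and H\"older passage to the limit is exactly the standard route one expects behind such a reference, and your remark that all quantities live in the commutative field $\mathbb{C}_{I}$ cleanly disposes of any ordering issues.
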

	\begin{proof}
		The proof is similar to that given in \cite{li2018fourier}. 
	\end{proof}
 
	\begin{lemma}\label{PW-PICI-Lp}
		For any fixed $I\in\mathbb{S}$, let $F:\mathbb{R}\to\mathbb{C}_{I}$. Then $F\left(y\right)$ is the boundary limit function of $\Psi(x+Iy)\in H^p\left(\Pi_{+,I}\right)$, $1\leq p\leq2$ if and only if $F\in L^p(\mathbb{R})$, $\operatorname{supp}\mathscr{F}_I\left(F\right)\subset\left(-\infty,0\right]$ and for all $z=x+Iy\in\Pi_{+,I}$,
    \begin{equation}\label{pw-PICI-Lp-psi}
        \Psi\left(z\right)=\frac{1}{\sqrt{2\pi}}\int_{-\infty}^{0}e^{zt}\mathscr{F}_I\left(F\right)\left(t\right)\mathrm{d}t=\int_{\mathbb{R}}K\left(z-It\right)F(t)\mathrm{d}t=\int_{\mathbb{R}}P\left(x,y-t\right)F\left(t\right)\mathrm{d}t
    \end{equation}
    where 
		\begin{equation}\label{pw-PICI-Lp-CKPK}
			K\left(z\right)=\frac{1}{2\pi}\int_{-\infty}^{0}e^{z\xi}\mathrm{d}\xi,\quad P\left(x,y\right)=\frac{|K\left(z\right)|^2}{K\left(2x\right)}.
		\end{equation}
	\end{lemma}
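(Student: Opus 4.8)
The statement is the classical half--plane Hardy--space Paley--Wiener theorem transplanted to the slice $\mathbb{C}_{I}$; since $F$, $\Psi$, $K$ and $P$ are all $\mathbb{C}_{I}$--valued and $\mathbb{C}_{I}$ is a commutative field isomorphic to $\mathbb{C}$, no genuinely quaternionic phenomenon arises, and the plan is to prove sufficiency by an explicit construction and necessity by reducing to the case $p=2$, which is already contained in the discussion following Proposition~\ref{prop-RK}. Let $p'=p/(p-1)$. A direct computation from \eqref{pw-PICI-Lp-CKPK} gives, for $z=x+Iy$ with $x>0$, the elementary identities $K(z)=\frac{1}{2\pi z}$ and $P(x,y)=\frac{1}{\pi}\Re\frac{1}{z}=K(z)+\overline{K(z)}$, which I use repeatedly.

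\emph{Sufficiency.} Let $F\in L^{p}(\mathbb{R})$ be $\mathbb{C}_{I}$--valued with $\operatorname{supp}\mathscr{F}_{I}(F)\subset(-\infty,0]$. By Hausdorff--Young $\mathscr{F}_{I}(F)\in L^{p'}(\mathbb{R})$, and since $|e^{zt}|=e^{xt}\le1$ on $(-\infty,0]$ with $t\mapsto e^{xt}\chi_{(-\infty,0]}(t)$ in $L^{1}(\mathbb{R})\cap L^{p}(\mathbb{R})$, the integral $\Psi(z)$ in \eqref{pw-PICI-Lp-psi} converges absolutely on $\Pi_{+,I}$; differentiating under the integral sign (legitimate by the bound $|t\,e^{zt}|\le|t|e^{\delta t}\in L^{1}((-\infty,0])$ for $x\ge\delta>0$) shows $\overline{\partial_{I}}\Psi=0$. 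Writing $\Phi_{z}(t)=e^{zt}\chi_{(-\infty,0]}(t)$, one computes $\mathscr{F}_{I}(\Phi_{z})=\sqrt{2\pi}\,K(z-I\,\cdot\,)$, so Lemma~\ref{FTLp} applied to $(F,\Phi_{z})$ rewrites $\Psi(z)$ as $\int_{\mathbb{R}}K(z-It)F(t)\,\mathrm{d}t$ (the Cauchy form); and since $P(x,y-t)=K(z-It)+\overline{K(z-It)}$, while $\overline{K(z-It)}$ equals, as a function of $t$, $\frac{1}{\sqrt{2\pi}}\mathscr{F}_{I}(g)$ with $g(\xi)=e^{-\overline{z}\xi}\chi_{[0,\infty)}(\xi)\in L^{1}\cap L^{p}$, a second application of Lemma~\ref{FTLp} shows $\int_{\mathbb{R}}\overline{K(z-It)}F(t)\,\mathrm{d}t=\frac{1}{\sqrt{2\pi}}\int_{0}^{\infty}e^{-\overline{z}t}\mathscr{F}_{I}(F)(t)\,\mathrm{d}t=0$, giving the Poisson form. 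Finally, from the Poisson form, Minkowski's integral inequality and $\int_{\mathbb{R}}P(x,t)\,\mathrm{d}t=1$ yield $\lVert\Psi(x+I\,\cdot\,)\rVert_{L^{p}}\le\lVert F\rVert_{L^{p}}$ for every $x>0$, so $\Psi\in H^{p}(\Pi_{+,I})$, and since $\{P(x,\cdot)\}_{x>0}$ is an approximate identity, $\Psi(x+I\,\cdot\,)\to F$ in $L^{p}$ and non-tangentially a.e.\ as $x\to0^{+}$, i.e.\ $F$ is the boundary limit of $\Psi$.

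\emph{Necessity.} Let $\Psi\in H^{p}(\Pi_{+,I})$ with boundary limit $F$; then $F\in L^{p}(\mathbb{R})$ by the definition of $H^{p}$ and Fatou's lemma. For $p=2$ the conclusion is exactly the computation recorded after Proposition~\ref{prop-RK}; so suppose $1\le p<2$ and fix $x_{0}>0$. By Lemma~\ref{1<cHp}, $\Psi$ is bounded, say by $M$, on $\Pi_{+,I,x_{0}}$, whence $\int_{\mathbb{R}}|\Psi(x+x_{0}+Iy)|^{2}\,\mathrm{d}y\le M^{2-p}\lVert\Psi\rVert_{H^{p}}^{p}$ for all $x>0$; thus $\Psi(\,\cdot\,+x_{0})\in H^{2}(\Pi_{+,I})$ with boundary limit $G_{x_{0}}(y):=\Psi(x_{0}+Iy)$. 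The $p=2$ case applied to $\Psi(\,\cdot\,+x_{0})$ gives $\operatorname{supp}\mathscr{F}_{I}(G_{x_{0}})\subset(-\infty,0]$ together with the three representations of \eqref{pw-PICI-Lp-psi} for $\Psi(\,\cdot\,+x_{0})$ in terms of $G_{x_{0}}$. Now let $x_{0}\to0^{+}$: the left--hand sides tend to $\Psi$ by continuity, $G_{x_{0}}\to F$ a.e.\ (radial non-tangential convergence) while $\sup_{x_{0}}\lVert G_{x_{0}}\rVert_{L^{p}}\le\lVert\Psi\rVert_{H^{p}}$, and the relevant kernels $K(z-I\,\cdot\,)$, $P(x,\cdot)$, $e^{z(\cdot)}\chi_{(-\infty,0]}$ are fixed elements of $L^{p'}$ (resp.\ $L^{p'}$, $L^{p}$); Vitali's convergence theorem therefore allows passing to the limit inside each integral, and a like argument tested against Schwartz functions shows $\mathscr{F}_{I}(G_{x_{0}})\to\mathscr{F}_{I}(F)$ in the sense of tempered distributions, so $\operatorname{supp}\mathscr{F}_{I}(F)\subset(-\infty,0]$. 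This yields \eqref{pw-PICI-Lp-psi} for $\Psi$ and finishes the proof.

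\emph{Main obstacle.} Nothing here is specifically quaternionic once one works inside $\mathbb{C}_{I}$; the content is the classical $H^{p}$--theory of a half--plane. The delicate point is the reduction $1\le p<2$: one must interpolate via Lemma~\ref{1<cHp} to land in $H^{2}$ and then pass to the boundary limit $x_{0}\to0^{+}$, where uniform $L^{p}$--boundedness of the $G_{x_{0}}$ together with their a.e.\ convergence (a Vitali--type argument) replaces the classical $L^{p}$--norm convergence of $H^{p}$ boundary values. The two applications of Lemma~\ref{FTLp} identifying the Cauchy and Poisson kernels, the differentiation under the integral sign, and the approximate--identity argument are routine.
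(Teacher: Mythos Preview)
Your overall architecture coincides with the paper's: sufficiency by explicit construction of $\Psi$ from $\mathscr{F}_I(F)$, holomorphy, the three integral forms via Lemma~\ref{FTLp}, then the $H^p$ bound from the Poisson representation; necessity by translating into $H^2$ through Lemma~\ref{1<cHp} and letting the shift $x_0\to0$. Your route to the Poisson form is in fact neater than the paper's: you use $P(x,y-t)=K(z-It)+\overline{K(z-It)}$ and kill the conjugate term via the support hypothesis, whereas the paper introduces the auxiliary bilinear expression $I(z,w)=\int K(z-It)F(t)K(w+It)\,\mathrm{d}t$ and computes $I(z,\bar z)=\Psi(z)K(2x)$ to reach the same conclusion.

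There is, however, a genuine gap in your necessity argument at $p=1$. You claim that a.e.\ convergence $G_{x_0}\to F$ together with $\sup_{x_0}\lVert G_{x_0}\rVert_{L^1}<\infty$ lets you pass to the limit in $\int K(z-It)G_{x_0}(t)\,\mathrm{d}t$ and in $\langle\mathscr{F}_I(G_{x_0}),\varphi\rangle$ by a ``Vitali--type argument''. This fails: bounded $L^1$ norm plus a.e.\ convergence does not give uniform integrability (take $G_n=n\chi_{[0,1/n]}\to0$ a.e.\ with $\int G_n\equiv1$), so neither Vitali nor weak convergence against $L^\infty$ test functions is available. For $1<p\le2$ your argument does work, since a bounded sequence in the reflexive space $L^p$ that converges a.e.\ converges weakly, but $p=1$ is exactly the case you cannot ``replace'' away. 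The paper handles this uniformly by invoking the classical fact $\lVert G_{x_0}-F\rVert_{L^p}\to0$ for $H^p$ boundary values (valid for all $1\le p<\infty$), then Hausdorff--Young gives $\lVert\mathscr{F}_I(G_{x_0})-\mathscr{F}_I(F)\rVert_{L^{p'}}\to0$, and testing against Schwartz $\varphi$ with $\operatorname{supp}\varphi\subset[0,\infty)$ yields the support condition. Alternatively, once the support condition and your sufficiency proof are in hand, the three formulas for the original $\Psi$ follow by $H^p$--uniqueness (two $H^p$ functions with the same boundary values coincide), so no limiting argument for the integrals is needed at all; the paper in effect takes this route, proving only the support condition in the necessity direction. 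Either fix closes the gap.
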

	\begin{proof}
		The proof is similar to that given in \cite{li2018fourier}. Let us first prove that, if $\Psi\left(x+Iy\right)\in H^p\left(\Pi_{+,I}\right)$, then $\operatorname{supp}\mathscr{F}_I\left(F\right)\subset\left(-\infty,0\right]$.
		
		For any fixed $x_0>0$, we write
		\begin{equation*}
			\Psi_{x_0}\left(z\right)=\Psi\left(x_0+z\right).
		\end{equation*}
		By Lemma \ref{1<cHp}, there exists a positive constant C=C$\left(x_0,p\right)$ such that
		\begin{equation*}
			\sup_{z\in\Pi_{+,I}}|\Psi_{x_0}\left(z\right)|\leq C\lVert \Psi\rVert_{H^p\left(\Pi_{+,I}\right)}<\infty.
		\end{equation*}
		From the last inequality, we have, for any $x>0$,
		\begin{align*}
			\int_{\mathbb{R}}\lvert \Psi_{x_0}\left(x+Iy\right)\rvert^2\mathrm{d}y&=\int_{\mathbb{R}}\lvert \Psi_{x_0}\left(x+Iy\right)\rvert^p\lvert \Psi_{x_0}\left(x+Iy\right)\rvert^{2-p}\mathrm{d}y\notag\\
			&\leq\int_{\mathbb{R}}\lvert \Psi_{x_0}\left(x+Iy\right)\rvert^p\left(C\lVert \Psi\rVert_{H^p\left(\Pi_{+,I}\right)}\right)^{2-p}\mathrm{d}y\notag\\
			&\leq\left(C\lVert \Psi\rVert_{H^p\left(\Pi_{+,I}\right)}\right)^{2-p}\lVert \Psi_{x_0}\rVert_{p}^{p}<\infty,
		\end{align*}
		so that
		\begin{equation*}
			\Psi_{x_0}\left(z\right)\in H^2\left(\Pi_{+,I}\right).
		\end{equation*}
		Proposition \ref{prop-RK} implies that
		\begin{equation}\label{PW-PICI-Lp-SFTfx0}
			\operatorname{supp}\mathscr{F}_I\left(F_{x_{0}}\right)\subset\left(-\infty,0\right],
		\end{equation}
  where $F_{x_{0}}(y)=\Psi_{x_{0}}(Iy)$ is the NTBL of $\Psi_{x_{0}}$.
		When $p=1$, we have
		\begin{equation*}
			\lVert\mathscr{F}_I\left(F\right)\rVert_{L^{\infty}(\mathbb{R})}\leq\lVert F\rVert_{L^{1}(\mathbb{R})}.
		\end{equation*}
		When $1<p\leq2$, by Hausdorff-Young's inequality, for $\frac{1}{p}+\frac{1}{p^\prime}=1$, we have
		\begin{equation*}
			\lVert\mathscr{F}_I\left(F\right)\rVert_{L^{p^\prime}(\mathbb{R})}\leq\lVert F\rVert_{L^{p}(\mathbb{R})}.
		\end{equation*}
		Hence, for $1\leq p\leq2$, by the above two inequalities, we have
		\begin{equation}\label{PW-PICI-Lp-FTfx0-FTf}
			\lVert\mathscr{F}_I\left(F_{x_{0}}\right)-\mathscr{F}_I\left(F\right)\rVert_{L^{p^\prime}(\mathbb{R})}\leq\lVert F_{x_{0}}-F\rVert_{L^{p}(\mathbb{R})}.
		\end{equation}
		Since $$\lim_{x_0\to0}\lVert F_{x_{0}}-F\rVert_{L^{p}(\mathbb{R})}=0,$$ we have $$\lim_{x_0\to0}\lVert\mathscr{F}_I\left(F_{x_{0}}\right)-\mathscr{F}_I\left(F\right)\rVert_{L^{p^\prime}(\mathbb{R})}=0.$$
		Due to (\ref{PW-PICI-Lp-SFTfx0}) and (\ref{PW-PICI-Lp-FTfx0-FTf}), for any $\varphi\in\mathscr{S}\left(\mathbb{R}\right)$, where $\mathscr{S}\left(\mathbb{R}\right)$ is the space of rapidly decreasing functions on $\mathbb{R}$, satisfying $\operatorname{supp}\varphi\subset[0,+\infty)$, we have
		\begin{align*}
			&\Big|\int_{\mathbb{R}}\mathscr{F}_I\left(F\right)\left(t\right)\varphi\left(t\right)\mathrm{d}t\Big|\\
			=&\Big|\int_{\mathbb{R}}\left(\mathscr{F}_I\left(F\right)\left(t\right)-\mathscr{F}_I\left(F_{x_{0}}\right)\left(t\right)\right)\varphi\left(t\right)\mathrm{d}t+\int_{\mathbb{R}}\mathscr{F}_I\left(F_{x_{0}}\right)\left(t\right)\varphi\left(t\right)\mathrm{d}t\Big|\notag\\
			=&\Big|\int_{\mathbb{R}}\left(\mathscr{F}_I\left(F\right)\left(t\right)-\mathscr{F}_I\left(F_{x_{0}}\right)\left(t\right)\right)\varphi\left(t\right)\mathrm{d}t\Big|\notag\\
			\leq&\Big\lVert\mathscr{F}_I\left(F\right)-\mathscr{F}_I\left(F_{x_{0}}\right)\Big\rVert_{L^{p^\prime}(\mathbb{R})}\lVert\varphi\rVert_{L^p(\mathbb{R})}.
		\end{align*}
		This implies that $\operatorname{supp}\mathscr{F}_I\left(F\right)\subset\left(-\infty,0\right]$.
		
		Let us now prove that if $F\in L^p(\mathbb{R})$ and $\operatorname{supp}\mathscr{F}_I\left(F\right)\subset\left(-\infty,0\right]$, then Formula \eqref{pw-PICI-Lp-psi} holds, and, as a consequence, $\Psi\left(x+Iy\right)\in H^p\left(\Pi_{+,I}\right)$.
		
		Let $F\in L^p(\mathbb{R})$, $1\leq p\leq2$, and $\operatorname{supp}\mathscr{F}_I\left(F\right)\subset\left(-\infty,0\right]$. First, we can easily show that for any fixed $x>0$ and for $z=x+Iy$,
		\begin{equation*}
			|e^{zt}\mathscr{F}_I\left(F\right)\left(t\right)|=|\mathscr{F}_I\left(F\right)\left(t\right)|e^{xt}\in L^1\left(-\infty,0\right).
		\end{equation*}
		The Lebesgue dominated convergence theorem and the H$\rm\ddot{o}$lder inequality can be used to conclude that the function
		\begin{equation}\label{PW-PICI-Lp-gz}
			\Psi\left(z\right)=\frac{1}{\sqrt{2\pi}}\int_{-\infty}^{0}e^{zt}\mathscr{F}_I\left(F\right)\left(t\right)\mathrm{d}t
		\end{equation}
		is continuous in $\Pi_{+,I}$. The Fubini theorem and the Cauchy integral theorem show that $\int_{\gamma}\Psi\left(z\right)\mathrm{d}z=0$ for every closed path $\gamma$ in $\Pi_{+,I}$. Using the Morera theorem, we have that $\Psi$ is holomorphic in $\Pi_{+,I}$.
		
		Next, we show $\Psi(x+Iy)\in H^{p}\left(\Pi_{+,I}\right)$. For $z,w\in\Pi_{+,I}$, let
		\begin{equation}\label{PW-PICI-Lp-I(z,w)}
			I\left(z,w\right)=\int_{\mathbb{R}}K\left(z-It\right)F\left(t\right)K\left(w+It\right)\mathrm{d}t.
		\end{equation}
		By (\ref{pw-PICI-Lp-CKPK}), we have $$K\left(z-It\right)=\frac{1}{\sqrt{2\pi}}\mathscr{F}_I\left(\chi_-\left(\cdot\right)e^{z\left(\cdot\right)}\right)\left(t\right)$$ and $$K\left(w+It\right)=\frac{1}{\sqrt{2\pi}}\mathscr{F}_I\left(\chi_{+}\left(\cdot\right)e^{-w\left(\cdot\right)}\right)\left(t\right),$$ where
		$\chi_{\pm}\left(t\right)$ are characteristic functions on $[0,\infty)$ and $(-\infty,0]$ respectively. Note that $\operatorname{supp}\mathscr{F}_{I}(F)\subset(-\infty,0]$, which means that $\mathscr{F}_{I}(F)(t)=\chi_{-}(t)\mathscr{F}_{I}(F)(t)$. Since 
		\begin{equation*}
			\mathscr{F}_I\left(\chi_-\left(\cdot\right)e^{z\left(\cdot\right)}\right)\left(t\right)\mathscr{F}_I\left(\chi_{+}\left(\cdot\right)e^{-w\left(\cdot\right)}\right)\left(t\right)=\mathscr{F}_I\left(\chi_-\left(\cdot\right)e^{z\left(\cdot\right)}*\chi_{+}\left(\cdot\right)e^{-w\left(\cdot\right)}\right)\left(t\right),
		\end{equation*}
		and Lemma \ref{FTLp} we have
		\begin{align*}
			I\left(z,w\right)=&\frac{1}{2\pi}\int_{\mathbb{R}}\chi_{-}\left(t\right)\mathscr{F}_I\left(F\right)\left(t\right)\int_{\mathbb{R}}\chi_{-}\left(\xi\right)e^{z\xi}\chi_{+}\left(t-\xi\right)e^{-w\left(t-\xi\right)}\mathrm{d}\xi\mathrm{d}t\\
			=&\frac{1}{2\pi}\int_{\mathbb{R}}\chi_{-}\left(t\right)\mathscr{F}_I\left(F\right)\left(t\right)\int_{\mathbb{R}}\chi_{-}\left(\xi\right)e^{z\xi}\chi_{-}\left(\xi-t\right)e^{w\left(\xi-t\right)}\mathrm{d}\xi\mathrm{d}t.
		\end{align*}
		Let $s=\xi-t$. Using $\chi_{-}\left(t\right)\chi_{-}\left(s+t\right)\chi_{-}\left(s\right)=\chi_{-}\left(t\right)\chi_{-}\left(s\right)$, we have 
		\begin{align*}
			I\left(z,w\right)=&\frac{1}{2\pi}\int_{\mathbb{R}}\chi_{-}\left(t\right)\mathscr{F}_I\left(F\right)\left(t\right)\int_{\mathbb{R}}\chi_{-}\left(s\right)e^{z\left(s+t\right)}e^{w s}\mathrm{d}s\mathrm{d}t\\
			=&\frac{1}{2\pi}\int_{\mathbb{R}}\chi_{-}\left(t\right)e^{zt}\mathscr{F}_I\left(F\right)\left(t\right)\int_{\mathbb{R}}\chi_{-}\left(s\right)e^{\left(z+w\right)s}\mathrm{d}s\mathrm{d}t.
		\end{align*}
        As a consequence, $I(z,w)=\Psi(z)K(z+w)$. 
		For $z=x+Iy\in\Pi_{+,I}$, we have $\overline{z}=x-Iy\in\Pi_{+,I}$, and
		\begin{equation}\label{PW-PICI-Lp-I=gK}
			I\left(z,\overline{z}\right)=\Psi\left(z\right)K\left(z+\overline{z}\right)=\Psi\left(z\right)K\left(2x\right).
		\end{equation}
		Hence, by the formulas (\ref{PW-PICI-Lp-I(z,w)}) and (\ref{PW-PICI-Lp-I=gK}), we have
		\begin{equation}\label{PW-PICI-Lp-g=Pf}
			\Psi\left(z\right)=\int_{\mathbb{R}}F\left(t\right)\frac{K\left(z-It\right)K\left(\overline{z}+It\right)}{K\left(2x\right)}\mathrm{d}t=\int_{\mathbb{R}}P\left(x,y-t\right)F\left(t\right)\mathrm{d}t,
		\end{equation}
		where $P\left(x,y\right)$ is the Poisson kernel associated with the right half-plane $\Pi_{+,I}$. Due to (\ref{PW-PICI-Lp-g=Pf}), by $\lVert \Psi(x+I\cdot)\rVert_{L^p(\mathbb{R})}=\lVert P*F\rVert_{L^p(\mathbb{R})}\leq\lVert P\rVert_{L^1(\mathbb{R})}\lVert F\rVert_{L^p(\mathbb{R})}$, we have
		\begin{equation*}
			\sup_{x>0}\int_{\mathbb{R}}|\Psi(x+Iy)|^p\mathrm{d}y<\infty.
		\end{equation*}
		It follows that $\Psi(x+Iy)\in H^{p}\left(\Pi_{+,I}\right)$.
		
		Hence by  (\ref{PW-PICI-Lp-g=Pf}), we have $\lim_{x\to0}\Psi(x+Iy)=F(y)$, a.e. $y\in\mathbb{R}$. The function $\Psi$ belongs to $H^{p}(\Pi_{+,I})$ and $F(y)$ is its NTBL. 
		
		Moreover, by Lemma \ref{FTLp} and (\ref{PW-PICI-Lp-gz}), we have
		\begin{align*}
			\Psi\left(z\right)&=\frac{1}{\sqrt{2\pi}}\int_{-\infty}^{0}e^{zt}\mathscr{F}_I\left(F\right)\left(t\right)\mathrm{d}t\notag\\
			&=\frac{1}{\sqrt{2\pi}}\int_{\mathbb{R}}\mathscr{F}_I\left(\chi_{-}\left(\cdot\right)e^{z\left(\cdot\right)}\right)\left(t\right)F\left(t\right)\mathrm{d}t\notag\\
			&=\int_{\mathbb{R}}K\left(z-It\right)F\left(t\right)\mathrm{d}t.
		\end{align*}
		The proof is completed.
	\end{proof}

	\begin{lemma}\label{PW-NH+-HPF}
		Let $f\in\mathscr{N}_{l}\left(\mathbb{H}_{+}\right)\cap H^{p}\left(\mathbb{H}_{+}\right)$, $1\leq p\leq2$. Then $\operatorname{supp}\mathscr{F}_{E}(f)\subset\left(-\infty,0\right]$ and 
		\[f\left(x+Jy\right)=\frac{1}{\sqrt{2\pi}}\int_{-\infty}^{0}e^{\left(x+Jy\right)t}\mathscr{F}_{E}\left(f\right)\left(t\right)\mathrm{d}t\]
  for all $x>0,y\in\mathbb{R}$ and $J\in\mathbb{S}$.
	\end{lemma}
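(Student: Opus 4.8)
The plan is to work slice by slice, reduce everything to Lemma~\ref{PW-PICI-Lp}, and then close the loop with Proposition~\ref{prop-FTf_I^+} and Definition~\ref{essen_fourier}. The slice-preserving hypothesis $f\in\mathscr{N}_{l}(\mathbb{H}_{+})$ is exactly what allows the one-variable Paley--Wiener machinery already built in Lemmas~\ref{1<cHp}, \ref{FTLp}, \ref{PW-PICI-Lp} to be applied on \emph{each} complex slice.

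First I would fix an arbitrary $J\in\mathbb{S}$ and consider the restriction $\Psi:=f|_{\Pi_{+,J}}$. Since $f\in\mathscr{N}_{l}(\mathbb{H}_{+})$ is slice preserving, $f(\mathbb{H}_{+}\cap\mathbb{C}_{J})\subseteq\mathbb{C}_{J}$, so $\Psi:\Pi_{+,J}\to\mathbb{C}_{J}$; since $f\in\mathscr{R}_{l}(\mathbb{H}_{+})$, one has $\overline{\partial_{J}}\Psi=0$ on $\Pi_{+,J}$, i.e. $\Psi$ is holomorphic there after the usual identification of $\mathbb{C}_{J}$ with $\mathbb{C}$; and since $f\in H^{p}(\mathbb{H}_{+})$, the non-tangential boundary value $F^{J}(y)=f(Jy)$ satisfies $\int_{\mathbb{R}}|F^{J}(y)|^{p}\,\mathrm{d}y\le\sup_{I\in\mathbb{S}}\int_{\mathbb{R}}|F^{I}(y)|^{p}\,\mathrm{d}y<\infty$, whence $\Psi\in H^{p}(\Pi_{+,J})$. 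All of this is immediate from the definitions of $\mathscr{N}_{l}$, $\mathscr{R}_{l}$ and $H^{p}(\mathbb{H}_{+})$. Next I would apply Lemma~\ref{PW-PICI-Lp} with the fixed imaginary unit taken to be $J$ and with $F=F^{J}$: because $F^{J}$ is the boundary limit of $\Psi\in H^{p}(\Pi_{+,J})$, the lemma yields $F^{J}\in L^{p}(\mathbb{R})$, $\operatorname{supp}\mathscr{F}_{J}(F^{J})\subset(-\infty,0]$, and, for all $x>0$, $y\in\mathbb{R}$,
\begin{equation*}
	f(x+Jy)=\Psi(x+Jy)=\frac{1}{\sqrt{2\pi}}\int_{-\infty}^{0}e^{(x+Jy)t}\mathscr{F}_{J}(F^{J})(t)\,\mathrm{d}t.
\end{equation*}

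Finally, I would invoke Proposition~\ref{prop-FTf_I^+}, whose conclusion extends from $H^{1}$ to the full range $1\le p\le2$ by the standard Plancherel/Hausdorff--Young argument indicated after its proof, to conclude that for $f\in\mathscr{N}_{l}(\mathbb{H}_{+})\cap H^{p}(\mathbb{H}_{+})$ the transform $\mathscr{F}_{J}(F^{J})$ is real-valued and independent of $J\in\mathbb{S}$, hence coincides with $\mathscr{F}_{E}(f)$ by Definition~\ref{essen_fourier}. Substituting $\mathscr{F}_{J}(F^{J})=\mathscr{F}_{E}(f)$ into the two conclusions just obtained gives $\operatorname{supp}\mathscr{F}_{E}(f)\subset(-\infty,0]$ and the integral representation of $f(x+Jy)$ for every $x>0$, $y\in\mathbb{R}$, $J\in\mathbb{S}$, which is the assertion of the lemma.

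I do not expect a genuine obstacle here: the substantive analytic content (the Paley--Wiener characterization on a single $\mathbb{C}_{J}$-slice, the Poisson-kernel estimate, and the transfer from $H^{2}$ to $H^{p}$) is already packaged in Lemmas~\ref{1<cHp}--\ref{PW-PICI-Lp}. The only points that need care are (i) verifying that $\Psi$ inherits holomorphy and $H^{p}$ membership on the slice, which is routine, and (ii) spelling out the extension of Proposition~\ref{prop-FTf_I^+} to $1\le p\le2$: for $1<p\le2$ the boundary value $F^{J}$ need not lie in $L^{1}(\mathbb{R})$, so $\mathscr{F}_{J}(F^{J})$ is defined by the Hausdorff--Young/Plancherel extension and the equality $\mathscr{F}_{J}(F^{J})=\mathscr{F}_{E}(f)$ is obtained by approximating $F^{J}$ in $L^{p}$ by functions in $L^{1}\cap L^{p}$ and passing to the limit, which is entirely standard.
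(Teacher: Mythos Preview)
Your proposal is correct and follows essentially the same approach as the paper: restrict to a slice, apply Lemma~\ref{PW-PICI-Lp}, and invoke Proposition~\ref{prop-FTf_I^+} (extended to $1\le p\le 2$ via the $L^{1}+L^{2}$ decomposition) to identify $\mathscr{F}_{J}(F^{J})$ with $\mathscr{F}_{E}(f)$. The only cosmetic difference is that the paper fixes one slice $I$ and then transfers the resulting integral representation to an arbitrary slice $J$ via the explicit decomposition $e^{(x+Jy)t}=e^{xt}\cos(yt)+Je^{xt}\sin(yt)$, whereas you apply Lemma~\ref{PW-PICI-Lp} directly on each slice $J$ from the outset, which is slightly more streamlined.
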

	\begin{proof}
		For any fixed $I\in\mathbb{S}$, by $f\in\mathscr{N}_{l}\left(\mathbb{H}_{+}\right)$, we have $f\left(\Pi_{+,I}\right)\subset\mathbb{C}_{I}$ and $f\left(x-Iy\right)=\overline{f\left(x+Iy\right)}$. Since $f\in H^{p}(\mathbb{H}_{+})$, $1\leq p\leq 2$, we can get $f\left(x+Iy\right)\in H^{p}\left(\Pi_{+,I}\right)$. By Lemma \ref{PW-PICI-Lp}, we have $\operatorname{supp}\mathscr{F}_{I}\left(F^{I}\right)\subset\left(-\infty,0\right]$ and
		\begin{equation*}
			f\left(x+Iy\right)=\frac{1}{\sqrt{2\pi}}\int_{-\infty}^{0}e^{\left(x+Iy\right)t}\mathscr{F}_{I}\left(F^{I}\right)\left(t\right)\mathrm{d}t
		\end{equation*}
        for all $x>0$ and $y\in\mathbb{R}$. It is known that if $F^{I}\in L^{p}(\mathbb{R})$ for $1\le p\le2$ then $F^{I}$ can be written as $F^{I}=F^{I}_{1}+F^{I}_{2}$ where $F^{I}_{1}\in L^{1}(\mathbb{R})$ and $F^{I}_{2}\in L^{2}(\mathbb{R})$. Thus, by Proposition \ref{prop-FTf_I^+} and Plancherel's theorem, there exists a real-valued function $\mathscr{F}_{E}(f)$ which is independent of $I$ such that  $\mathscr{F}_{E} (f)=\mathscr{F}_{I}\left(F^{I}\right)$. Note that for $t$ fixed, $e^{\mathbf{q}t}$ is slice preserving on $\mathbb{H}_{+}$ and decomposes at $x+Iy$ as $e^{xt}\cos{(yt)}+Ie^{xt}\sin{(yt)}$. Then we have
        \begin{align*}
            f\left(x+Iy\right)=&\frac{1}{\sqrt{2\pi}}\int_{-\infty}^{0}e^{\left(x+Iy\right)t}\mathscr{F}_{E}\left(f\right)\left(t\right)\mathrm{d}t\\
            =&\frac{1}{\sqrt{2\pi}}\int_{-\infty}^{0}\left[e^{xt}\cos(yt)+Ie^{xt}\sin(yt)\right]\mathscr{F}_{E}\left(f\right)\left(t\right)\mathrm{d}t\\
            =&\frac{1}{\sqrt{2\pi}}\int_{-\infty}^{0}e^{xt}\cos(yt)\mathscr{F}_{E}\left(f\right)\left(t\right)\mathrm{d}t+I\frac{1}{\sqrt{2\pi}}\int_{-\infty}^{0}e^{xt}\sin(yt)\mathscr{F}_{E}\left(f\right)\left(t\right)\mathrm{d}t\\
            =&\alpha(x,y)+I\beta(x,y),
        \end{align*}
        where $\alpha$ and $\beta$ are two real-valued functions. Hence, for every $J\in\mathbb{S}$, we have
        \begin{align*}
			f\left(x+Jy\right)=&\alpha(x,y)+J\beta(x,y)\\
            =&\frac{1}{\sqrt{2\pi}}\int_{-\infty}^{0}e^{xt}\cos\left(yt\right)\mathscr{F}_{E}\left(f\right)(t)\mathrm{d}t+\frac{J}{\sqrt{2\pi}}\int_{-\infty}^{0}e^{xt}\sin\left(yt\right)\mathscr{F}_{E}\left(f\right)(t)\mathrm{d}t\\
            =&\frac{1}{\sqrt{2\pi}}\int_{-\infty}^{0}\left[e^{xt}\cos\left(yt\right)+Je^{xt}\sin\left(yt\right)\right]\mathscr{F}_{E}\left(f\right)(t)\mathrm{d}t\\
            =&\frac{1}{\sqrt{2\pi}}\int_{-\infty}^{0}e^{\left(x+Jy\right)t}\mathscr{F}_{E}\left(f\right)(t)\mathrm{d}t
		\end{align*}
		and $\operatorname{supp}\mathscr{F}_{E}\left(f\right)=\operatorname{supp}\mathscr{F}_{I}\left(F^{I}\right)\subset\left(-\infty,0\right]$.
	\end{proof}
    \begin{lemma}
        Let $\mathscr{A},\mathscr{B}:\mathbb{R}\to\mathbb{R}$ with $\mathscr{A}(-y)=\mathscr{A}(y)$ and $\mathscr{B}(-y)=-\mathscr{B}(y)$, and $\mathscr{A},\mathscr{B}\in L^{p}(\mathbb{R}), 1\leq p\leq2$. Assume that there exists a quaternionic imaginary unit $I\in\mathbb{S}$ such that $\operatorname{supp}\mathscr{F}_{I}(\mathscr{A}),\operatorname{supp}\mathscr{F}_{I}(\mathscr{B})\subset(-\infty,0]$. Then for all $x>0,y\in\mathbb{R}$ and $J\in\mathbb{S}$,
        \[f(x+Jy)=\frac{1}{\sqrt{2\pi}}\int_{-\infty}^{0}e^{(x+Jy)t}\mathscr{F}_{J}\left(\mathscr{A}+J\mathscr{B}\right)(t)\mathrm{d}t\]
		is in $H^{p}\left(\mathbb{H}_{+}\right)\cap\mathscr{N}_{l}\left(\mathbb{H}_{+}\right)$. Moreover, let $F^{J}(y)$ be the NTBL of $f$ at $Jy$, then $$\mathscr{A}(y)=\frac{1}{2}\left[F^{J}(y)+F^{J}(-y)\right]=\alpha(0,y)$$ and $$\mathscr{B}(y)=\frac{J}{2}\left[F^{J}(-y)-F^{J}(y)\right]=\beta(0,y)$$ holds a.e. $y\in\mathbb{R}$, and $\mathscr{F}_{E}(f)=\mathscr{F}_{J}\left(\mathscr{A}+J\mathscr{B}\right)$ is independent of $J$.
    \end{lemma}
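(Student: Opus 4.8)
The plan is to reduce the claim to the scalar $\mathbb{C}_I$-valued statements already available (Lemma \ref{PW-PICI-Lp} and Proposition \ref{prop-extl(f)inN}), exploiting the symmetry hypotheses $\mathscr{A}(-y)=\mathscr{A}(y)$, $\mathscr{B}(-y)=-\mathscr{B}(y)$ to land inside $\mathscr{N}_l$. First I would set $G:=\mathscr{A}+I\mathscr{B}:\mathbb{R}\to\mathbb{C}_I$, which lies in $L^p(\mathbb{R})$ with $\operatorname{supp}\mathscr{F}_I(G)\subset\operatorname{supp}\mathscr{F}_I(\mathscr{A})\cup\operatorname{supp}\mathscr{F}_I(\mathscr{B})\subset(-\infty,0]$. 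By Lemma \ref{PW-PICI-Lp} there is $\Psi\in H^p(\Pi_{+,I})$ with NTBL equal to $G$ and
\begin{equation*}
\Psi(x+Iy)=\frac{1}{\sqrt{2\pi}}\int_{-\infty}^{0}e^{(x+Iy)t}\mathscr{F}_I(G)(t)\,\mathrm{d}t
\end{equation*}
for $x>0$, $y\in\mathbb{R}$.

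Next I would check that $\Psi$ satisfies the reflection identity $\Psi(x-Iy)=\overline{\Psi(x+Iy)}$ needed to apply Proposition \ref{prop-extl(f)inN}. Because $\mathscr{A}$ is even and real, $\mathscr{F}_I(\mathscr{A})$ is real and even; because $\mathscr{B}$ is odd and real, $\mathscr{F}_I(\mathscr{B})$ is purely imaginary (i.e. $I\cdot(\text{real})$) and odd. A short computation with $e^{(x-Iy)t}=\overline{e^{(x+Iy)t}}$ (valid since the exponent lies in $\mathbb{C}_I$) then gives $\Psi(x-Iy)=\overline{\Psi(x+Iy)}$; equivalently one can just note that $\operatorname{Re}\Psi$ and $I^{-1}(\Psi-\operatorname{Re}\Psi)$ come out as the even/odd-in-$y$ pair $\alpha(x,y),\beta(x,y)$ dictated by the cosine/sine split used in Lemma \ref{PW-NH+-HPF}. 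Hence Proposition \ref{prop-extl(f)inN} applies to the restriction of $\Psi$ to the right half of $\mathbb{C}_I$, producing $f:=\mathbf{ext}_l(\Psi)\in\mathscr{N}_l(\mathbb{H}_+)$ with $f|_{\Pi_{+,I}}=\Psi$, and by uniqueness of slice regular extension $f$ is exactly the function defined by the integral formula in the statement (the integrand $e^{(x+Jy)t}$ is slice preserving in $\mathbf{q}=x+Jy$ for each fixed $t$, so the formula itself is manifestly the extension).

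To get $f\in H^p(\mathbb{H}_+)$ I would use that for $f\in\mathscr{N}_l$ one has $|f(x+Jy)|=|f(x+Iy)|$ for all $J\in\mathbb{S}$ (noted after Proposition \ref{prop-f=a+Ib}), so $\sup_J\int_{\mathbb{R}}|F^J(y)|^p\,\mathrm{d}y=\int_{\mathbb{R}}|G(y)|^p\,\mathrm{d}y<\infty$; equivalently apply the Poisson-integral bound in \eqref{PW-PICI-Lp-g=Pf} slicewise. For the boundary-value and Fourier-transform identifications: decomposing $\mathbf{q}=x+Jy$, writing $f(x+Jy)=\alpha(x,y)+J\beta(x,y)$ as in Lemma \ref{PW-NH+-HPF}, and letting $x\to0^+$, the Poisson-kernel representation gives $\alpha(0,y)=\mathscr{A}(y)$ and $\beta(0,y)=\mathscr{B}(y)$ a.e., whence $F^J(y)=\mathscr{A}(y)+J\mathscr{B}(y)$, $\mathscr{A}(y)=\tfrac12[F^J(y)+F^J(-y)]$, $\mathscr{B}(y)=\tfrac{J}{2}[F^J(-y)-F^J(y)]$; and since $f\in\mathscr{N}_l(\mathbb{H}_+)\cap H^p(\mathbb{H}_+)$, Proposition \ref{prop-FTf_I^+} (together with its $L^p$ extension used in Lemma \ref{PW-NH+-HPF}) shows $\mathscr{F}_J(F^J)$ is real-valued and independent of $J$, so $\mathscr{F}_E(f)=\mathscr{F}_J(\mathscr{A}+J\mathscr{B})$ as claimed.

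The only genuinely delicate point is the reflection identity $\Psi(x-Iy)=\overline{\Psi(x+Iy)}$, i.e. correctly tracking how the parity of $\mathscr{A}$ and $\mathscr{B}$ forces $\mathscr{F}_I(\mathscr{A})$ real-even and $\mathscr{F}_I(\mathscr{B})$ imaginary-odd; everything else is bookkeeping on top of Lemma \ref{PW-PICI-Lp}, Proposition \ref{prop-extl(f)inN}, Proposition \ref{prop-FTf_I^+}, and Lemma \ref{PW-NH+-HPF}. One should also be mild about regularity when $p>1$: $\mathscr{A},\mathscr{B}\in L^p$ need not be in $L^1$, but splitting $G=G_1+G_2$ with $G_1\in L^1,G_2\in L^2$ (as in the proof of Lemma \ref{PW-NH+-HPF}) makes all Fourier-transform manipulations legitimate.
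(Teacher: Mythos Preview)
Your proposal is correct and follows essentially the same route as the paper: form $G=\mathscr{A}+I\mathscr{B}$, invoke Lemma~\ref{PW-PICI-Lp} to obtain $\Psi\in H^p(\Pi_{+,I})$, verify the reflection identity $\Psi(x-Iy)=\overline{\Psi(x+Iy)}$, extend via Proposition~\ref{prop-extl(f)inN} to land in $\mathscr{N}_l(\mathbb{H}_+)$, check $H^p(\mathbb{H}_+)$-membership, and then read off the boundary and Fourier identifications through Lemma~\ref{PW-NH+-HPF}. The only differences are cosmetic: the paper verifies the reflection identity by conjugating the Poisson integral \eqref{PW-LpHp-Pf} and using the parity of $\mathscr{A},\mathscr{B}$ directly (rather than via the parity of $\mathscr{F}_I(\mathscr{A}),\mathscr{F}_I(\mathscr{B})$ as you do), and it bounds $\lVert F^J\rVert_{L^p}$ by Minkowski on $\mathscr{A}+J\mathscr{B}$ rather than by the modulus-invariance $|f(x+Jy)|=|f(x+Iy)|$ for slice-preserving functions.
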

	
	\begin{proof}
		For a fixed $I\in\mathbb{S}$, since $\mathscr{A},\mathscr{B}\in L^{p}(\mathbb{R})$ for $1\leq p\leq2$ and $\operatorname{supp}\mathscr{F}_{I}\left(\mathscr{A}\right),\operatorname{supp}\mathscr{F}_{I}\left(\mathscr{B}\right)\subset\left(-\infty,0\right]$, we have $\mathscr{A}+I\mathscr{B}\in L^{p}(\mathbb{R})$ and $\operatorname{supp}\mathscr{F}_{I}\left(\mathscr{A}+I\mathscr{B}\right)\subset\left(-\infty,0\right]$. Thus, by Lemma \ref{PW-PICI-Lp}, we have that 
  \begin{equation}\label{PW-LpHp-Pf}
			\Psi\left(x+Iy\right)=\int_{\mathbb{R}}P\left(x,y-t\right)\left[\mathscr{A}\left(t\right)+I\mathscr{B}\left(t\right)\right]\mathrm{d}t
		\end{equation}
  is in $H^{p}(\Pi_{+,I})$ where $\mathscr{A}\left(y\right)+I\mathscr{B}\left(y\right)=\Psi(Iy)$ denotes the non-tangential value of $\Psi$ at $Iy$.
		Taking the conjugate of both sides of \eqref{PW-LpHp-Pf}, we have
		\begin{align*}
			\overline{\Psi\left(x+Iy\right)}&=\overline{\int_{\mathbb{R}}P\left(x,y-t\right)\left[\mathscr{A}\left(t\right)+I\mathscr{B}\left(t\right)\right]\mathrm{d}t}\notag\\
			&=\int_{\mathbb{R}}P\left(x,y-t\right)\overline{\left[\mathscr{A}\left(t\right)+I\mathscr{B}\left(t\right)\right]}\mathrm{d}t.
		\end{align*}
		Since $\mathscr{A}\left(-y\right)=\mathscr{A}\left(y\right)$ and $\mathscr{B}(-y)=-\mathscr{B}(y)$, we have
		\begin{align*}
			\overline{\Psi\left(x+Iy\right)}&=\int_{\mathbb{R}}P\left(x,y-t\right)\left[\mathscr{A}\left(-t\right)+I\mathscr{B}\left(-t\right)\right]\mathrm{d}t\notag\\
			&=\int_{\mathbb{R}}P\left(x,y+t\right)\left[\mathscr{A}\left(t\right)+I\mathscr{B}\left(t\right)\right]\mathrm{d}t\notag\\
			&=\int_{\mathbb{R}}P\left(x,-y-t\right)\left[\mathscr{A}\left(t\right)+I\mathscr{B}\left(t\right)\right]\mathrm{d}t\\
			&=\Psi\left(x-Iy\right).
		\end{align*}
		By Proposition \ref{prop-extl(f)inN}, for every $J\in\mathbb{S}$, we have that
		\begin{align*}
			f\left(x+Jy\right)
			=&\frac{1}{2}\left[\Psi\left(x+Iy\right)+\Psi\left(x-Iy\right)\right]+\frac{JI}{2}\left[\Psi\left(x-Iy\right)-\Psi\left(x+Iy\right)\right]\notag\\
           =&\int_{\mathbb{R}}P\left(x,y-t\right)\left[\mathscr{A}(t)+J\mathscr{B}(t)\right]\mathrm{d}t
		\end{align*}
        is in $\mathscr{N}_{l}\left(\mathbb{H}_{+}\right)$. Let $F^{J}\left(y\right)=f(Jy)$ be the NTBL of $f$ at $Jy$. Then we have that
		\begin{equation}\label{PW-LpHp-fJ+}
			F^{J}\left(y\right)=\mathscr{A}(y)+J\mathscr{B}(y)
		\end{equation}
		holds almost every $y\in\mathbb{R}$. Thus, by Minkowski's inequality, we have $\lVert F^{J}\rVert_{L^{p}(\mathbb{R})}\leq\lVert \mathscr{A}\rVert_{L^{p}(\mathbb{R})}+\lVert \mathscr{B}\rVert_{L^{p}(\mathbb{R})}<\infty$. Using the arbitrariness of $J$, we have $$\sup_{J\in\mathbb{S}}\lVert F^{J}\rVert_{L^{p}(\mathbb{R})}\leq\lVert \mathscr{A}\rVert_{L^{p}(\mathbb{R})}+\lVert \mathscr{B}\rVert_{L^{p}(\mathbb{R})}<\infty.$$ So, we have $f\in H^{p}\left(\mathbb{H}_{+}\right)$. Moreover, since $f\in\mathscr{N}_{l}(\mathbb{H}_{+})\cap H^{p}(\mathbb{H}_{+})$, by Lemma \ref{PW-NH+-HPF}, we have that for every $J\in\mathbb{S}$ and $x>0,y\in\mathbb{R}$,
  \begin{align*}
      f\left(x+Jy\right)
			=&\frac{1}{\sqrt{2\pi}}\int_{-\infty}^{0}e^{(x+Jy)t}\mathscr{F}_{E}\left(f\right)(t)\mathrm{d}t.
  \end{align*}
  In particular,  $\mathscr{F}_{E}\left(f\right)=\mathscr{F}_{J}(\mathscr{A}+J\mathscr{B})$.
  Furthermore, using Proposition \ref{prop-RF} and \eqref{PW-LpHp-fJ+}, $$\mathscr{A}(y)=\frac{1}{2}\left[F^{J}(y)+F^{J}(-y)\right]=\alpha(0,y)$$ and $$\mathscr{B}(y)=\frac{J}{2}\left[F^{J}(-y)-F^{J}(y)\right]=\beta(0,y)$$ holds a.e. $y\in\mathbb{R}$.
	\end{proof}
	\begin{proof}[Proof of Theorem \ref{PW-H+H-HPSFT}]
		By $f\in\mathscr{R}_{l}\left(\mathbb{H}_{+}\right)$ and Proposition \ref{prop-NE}, for every $\tilde{I}\in\mathbb{S}$, $x>0$ and $y\in\mathbb{R}$, we have
		\begin{align}\label{PW-H+H-extf}
			f\left(x+\tilde{I}y\right)=h_0\left(x+\tilde{I}y\right)+h_1\left(x+\tilde{I}y\right)I+h_2\left(x+\tilde{I}y\right)J+h_3\left(x+\tilde{I}y\right)K,
		\end{align}
		where $h_{m}\in\mathscr{N}_{l}\left(\mathbb{H}_{+}\right)$, $0\leq m\leq3$ and $K=IJ$. Similarly, for $I'\in\mathbb{S}$, we get $\Big|h_{m}\left(x+\tilde{I}y\right)\Big|\leq \Big\lvert f\left(x+I'y\right)\Big\rvert+\Big\lvert f\left(x-I'y\right)\Big\rvert$, $0\leq m\leq3$. Since $f\in H^{p}\left(\mathbb{H}_{+}\right)$, we have $h_{m}\in H^{p}\left(\mathbb{H}_{+}\right)$, $0\leq m\leq3$. Using Lemma \ref{PW-NH+-HPF}, we obtain that $\mathscr{F}_{E}\left(h_{m}\right)$ is real-valued, $\operatorname{supp}\mathscr{F}_{E}\left(h_{m}\right)\subset\left(-\infty,0\right]$ and
		\begin{equation}\label{PW-H+H-hl}
			h_{m}\left(x+\tilde{I}y\right)=\frac{1}{\sqrt{2\pi}}\int_{-\infty}^{0}e^{\left(x+\tilde{I}y\right)t}\mathscr{F}_{E}\left(h_{m}\right)\left(t\right)\mathrm{d}t,m=0,1,2,3.
		\end{equation}
		Combining (\ref{PW-H+H-extf}) and (\ref{PW-H+H-hl}), we have
		\begin{align*}
			&f\left(x+\tilde{I}y\right)\\
			=&h_0\left(x+\tilde{I}y\right)+h_1\left(x+\tilde{I}y\right)I+h_2\left(x+\tilde{I}y\right)J+h_3\left(x+\tilde{I}y\right)K\\
			=&\frac{1}{\sqrt{2\pi}}\int_{-\infty}^{0}e^{\left(x+\tilde{I}y\right)t}\mathscr{F}_{E}\left(h_{0}\right)\left(t\right)\mathrm{d}t+\frac{1}{\sqrt{2\pi}}\int_{-\infty}^{0}e^{\left(x+\tilde{I}y\right)t}\mathscr{F}_{E}\left(h_{1}\right)\left(t\right)\mathrm{d}tI\\
			&+\frac{1}{\sqrt{2\pi}}\int_{-\infty}^{0}e^{\left(x+\tilde{I}y\right)t}\mathscr{F}_{E}\left(h_{2}\right)\left(t\right)\mathrm{d}tJ+\frac{1}{\sqrt{2\pi}}\int_{-\infty}^{0}e^{\left(x+\tilde{I}y\right)t}\mathscr{F}_{E}\left(h_{3}\right)\left(t\right)\mathrm{d}tK\\
			=&\frac{1}{\sqrt{2\pi}}\int_{-\infty}^{0}e^{\left(x+\tilde{I}y\right)t}\left[\mathscr{F}_{E}\left(h_{0}\right)\left(t\right)+\mathscr{F}_{E}\left(h_{1}\right)\left(t\right)I+\mathscr{F}_{E}\left(h_{2}\right)\left(t\right)J+\mathscr{F}_{E}\left(h_{3}\right)\left(t\right)K\right]\mathrm{d}t\\
			=&\frac{1}{\sqrt{2\pi}}\int_{-\infty}^{0}e^{\left(x+\tilde{I}y\right)t}\mathscr{F}_{E}\left(f\right)\left(t\right)\mathrm{d}t
		\end{align*}
		and $\lvert\mathscr{F}_{E}\left(f\right)\left(t\right)\rvert^{2}=\sum_{m=0}^{3}\lvert\mathscr{F}_{E}\left(h_{m}\right)\left(t\right)\rvert^{2}$. It is obvious that $$\operatorname{supp}\mathscr{F}_{E}\left(f\right)\subset\bigcup_{m=0}^{3}\operatorname{supp}\mathscr{F}_{E}\left(h_{m}\right)\subset\left(-\infty,0\right].$$
	\end{proof}
	\begin{proof}[Proof of Theorem \ref{PW-H+H-SFTHP}]
		 
   Let $F:\mathbb{R}\to\mathbb{H}$. Then for a fixed $I\in\mathbb{S}$, it can be written as $$F=F_{0}+I F_{1}+J F_{2}+IJ F_{3}=\left(F_{0}+I F_{1}\right)+\left(F_{2}+I F_{3}\right)J$$ where $F_{0},F_{1},F_{2}$ and $F_{3}$ are real-valued, and $J\in\mathbb{S}$ with $J\perp I$. Let $G_{0}:=F_{0}+I F_{1}$ and $G_{1}:=F_{2}+I F_{3}$. It is easy to show that
     \[|F|^{2}=|G_{0}|^{2}+|G_{1}|^{2}\]
     and
     \[|\mathscr{F}_{I}\left(F\right)|^{2}=|\mathscr{F}_{I}\left(G_{0}\right)|^{2}+|\mathscr{F}_{I}\left(G_{1}\right)|^{2}.\]
     Since $F\in L^{p}(\mathbb{R}), 1\leq p\leq2$ and $\operatorname{supp}\mathscr{F}_{I}\left(F\right)\subset\left(-\infty,0\right]$, then we have $G_{0},G_{1}\in L^{p}(\mathbb{R})$ and $\operatorname{supp}\mathscr{F}_{I}\left(G_{0}\right),\operatorname{supp}\mathscr{F}_{I}\left(G_{1}\right)\subset\left(-\infty,0\right]$. Using Lemma \ref{PW-PICI-Lp}, we have that
    \begin{equation*}
			\Psi_{0}\left(x+Iy\right)=\int_{\mathbb{R}}P\left(x,y-t\right)G_{0}\left(t\right)\mathrm{d}t
		\end{equation*} 
  and
  \begin{equation*}
      \Psi_{1}\left(x+Iy\right)=\int_{\mathbb{R}}P\left(x,y-t\right)G_{1}\left(t\right)\mathrm{d}t
  \end{equation*}
  are in $H^{p}(\Pi_{+,I})$ where $G_{0}\left(y\right)$ and  $G_{1}\left(y\right)$ are the NTBL of $\Psi_{0}$ and $\Psi_{1}$ at $Iy$, respectively.
  Let $\Psi=\Psi_{0}+\Psi_{1} J$. Then 
  \[\Psi\left(x+Iy\right)=\int_{\mathbb{R}}P\left(x,y-t\right)F\left(t\right)\mathrm{d}t\]
  is in $H^{p}(\Pi_{+,I})$ where $F(y)$ is the NTBL of $\Psi$ at $Iy$.
  By Proposition \ref{prop-SRE}, for every $\widetilde{I}\in\mathbb{S}$, we have 
		\begin{align*}
			f\left(x+\widetilde{I}y\right):=&\ \mathbf{ext}_{l}(\Psi)\left(x+\widetilde{I}y\right)\notag\\
			=&\frac{1}{2}\left[\Psi\left(x-Iy\right)+\Psi\left(x+Iy\right)\right]+\frac{\widetilde{I}I}{2}\left[\Psi\left(x-Iy\right)-\Psi\left(x+Iy\right)\right]\notag\\
			=&\frac{1}{2}\left[\int_{\mathbb{R}}P\left(x,y-t\right)F\left(-t\right)\mathrm{d}t+\int_{\mathbb{R}}P\left(x,y-t\right)F\left(t\right)\mathrm{d}t\right]\notag\\
			&+\frac{\widetilde{I}I}{2}\left[\int_{\mathbb{R}}P\left(x,y-t\right)F\left(-t\right)\mathrm{d}t-\int_{\mathbb{R}}P\left(x,y-t\right)F\left(t\right)\mathrm{d}t\right]\notag\\
			=&\int_{\mathbb{R}}P\left(x,y-t\right)\left(\frac{1}{2}\left[F\left(-t\right)+F\left(t\right)\right]+\frac{\widetilde{I}I}{2}\left[F\left(-t\right)-F\left(t\right)\right]\right)\mathrm{d}t.
		\end{align*}
		It follows that $f\left(x+\widetilde{I}y\right)$ is the left slice regular function on $\mathbb{H}_{+}$ and when $\widetilde{I}=I$, $f(x+Iy)=\Psi\left(x+Iy\right)$. Let $F^{\widetilde{I}}\left(y\right)=f(\widetilde{I}y)$ be the NTBL of $f$ at $\widetilde{I}y$. Then $$F^{\widetilde{I}}\left(y\right)=\frac{1}{2}\left[F\left(-y\right)+F\left(y\right)\right]+\frac{\widetilde{I}I}{2}\left[F\left(-y\right)-F\left(y\right)\right]$$ holds almost every $y$. It is easy to see that
		\begin{equation*}
			\int_{-\infty}^{\infty}\lvert F^{\widetilde{I}}\left(y\right)\rvert^{p}\mathrm{d}y\leq2^{p}\int_{-\infty}^{\infty}\lvert F\left(y\right)\rvert^{p}\mathrm{d}y<\infty.
		\end{equation*}
		So, we have $\sup_{\widetilde{I}\in\mathbb{S}}\int_{-\infty}^{\infty}\lvert F^{\widetilde{I}}\left(y\right)\rvert^{p}\mathrm{d}y<\infty$. Hence, we have $f\in H^{p}\left(\mathbb{H}_{+}\right)$. Moreover, if $f\in H^{p}\left(\mathbb{H}_{+}\right)$, by Theorem \ref{PW-H+H-HPSFT}, we have that for every $\widetilde{I}\in\mathbb{S}$ and $x>0,y\in\mathbb{R}$,
  \begin{equation*}
      f(x+\widetilde{I}y)=\frac{1}{\sqrt{2\pi}}\int_{-\infty}^{0}e^{(x+\widetilde{I}y)t}\mathscr{F}_{E}\left(f\right)(t)\mathrm{d}t,
  \end{equation*}
  where $\mathscr{F}_{E}\left(f\right)=\mathscr{F}_{I}(F)$.
	\end{proof}

	\section{Whittaker–Kotelnikov–Shannon Sampling Theorem}
	The classical complex Paley-Wiener space, the sampling theorem and the sinc function play important roles in Fourier analysis, numerical analysis and signal analysis (see \cite{higgins1996sampling,lund1992sinc,stenger1993numerical}). So it is the motivation for us to study the sampling theorem for slice regular functions. In the following we first define the Paley-Wiener space $PW_{A}^{2}\left(\mathbb{H}\right)$, $A>0$, where
	\begin{equation*}
		PW_{A}^{2}\left(\mathbb{H}\right)=\left\{f\in\mathscr{R}_{l}\left(\mathbb{H}\right), f|_{\mathbb{R}}\in L^2(\mathbb{R}),\operatorname{supp}\mathscr{F}_{I}\left(f|_{\mathbb{R}}\right)\subset\left[-A,A\right]\text{ for every } I\in\mathbb{S}\right\}.
	\end{equation*}
 Recall that for all $\mathbf{q}\in\mathbb{H}$, the sine function (see \cite{colombo2016entire}, Remark 2.8) is given by
 $$\sin\left(\mathbf{q}\right)=\sum_{n=0}^{\infty}\left(-1\right)^{n}\frac{\mathbf{q}^{2n+1}}{\left(2n+1\right)!}.$$
 
 Now, we give the definition of the sinc function as follows.

\begin{definition}\label{def-sinc}
    For all $\mathbf{q}\in\mathbb{H}$, the sinc function is defined by
    \begin{equation*}
        \sinc(\mathbf{q})=\sum_{k=0}^{\infty}\mathbf{q}^{2k}\frac{(-\pi^2)^k}{(2k+1)!}=(\pi\mathbf{q})^{-1}\sin(\pi\mathbf{q})=\sin(\pi\mathbf{q})(\pi\mathbf{q})^{-1}.
    \end{equation*}
\end{definition}
It follows from Proposition \ref{prop-N-series} that $\sinc(\mathbf{q})\in\mathscr{N}_{l}(\mathbb{H})$.
\begin{remark}
    For all $\mathbf{q}=x+Iy\in\mathbb{H}$, the
 integral expression of sinc is given by
    \begin{equation*}
        \sinc(x+Iy)=\frac{1}{2\pi}\int_{-\pi}^{\pi}e^{I(x+Iy)t}\mathrm{d}t=\frac{\sin\left(\pi(x+Iy)\right)\cdot(x-Iy)}{\pi|x+Iy|^{2}}.
    \end{equation*}
\end{remark}

	Now, we give a reproducing kernel for $PW_A^2(\mathbb{H})$.
	\begin{theorem}[Reproducing kernel formula]
		If $f\in PW_{A}^{2}\left(\mathbb{H}\right)$, then
		\begin{equation*}
			f\left(\mathbf{q}\right)=\frac{A}{\pi}\int_{-\infty}^{
				\infty}\sinc\frac{A}{\pi}\left(\mathbf{q}-t\right)f\left(t\right)\mathrm{d}t,\ \mathbf{q}\in\mathbb{H}.
		\end{equation*}
	\end{theorem}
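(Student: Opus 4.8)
The plan is to reduce the identity to a single complex plane $\mathbb{C}_{I_{\mathbf{q}}}$ and then to recognise the claimed integral as the inverse Fourier integral already produced by Theorem~\ref{PW-QQ2}. Fix $\mathbf{q}\in\mathbb{H}$ and write $\mathbf{q}=x+Iy$ with $I=I_{\mathbf{q}}\in\mathbb{S}$ (if $\mathbf{q}\in\mathbb{R}$, pick any $I\in\mathbb{S}$). For every $t\in\mathbb{R}$ we have $\mathbf{q}-t=(x-t)+Iy\in\mathbb{C}_{I}$, so, since $\sinc\in\mathscr{N}_{l}(\mathbb{H})$, the value $\sinc\frac{A}{\pi}(\mathbf{q}-t)$ lies in $\mathbb{C}_{I}$, and from the integral expression for $\sinc$ in the remark after Definition~\ref{def-sinc} together with the substitution $u=\frac{A}{\pi}s$ one obtains
\begin{equation*}
\frac{A}{\pi}\,\sinc\frac{A}{\pi}(\mathbf{q}-t)=\frac{1}{2\pi}\int_{-A}^{A}e^{I(\mathbf{q}-t)u}\,\mathrm{d}u=\frac{1}{\sqrt{2\pi}}\,\mathscr{F}_{I}\!\left(\chi_{[-A,A]}(\cdot)\,e^{I\mathbf{q}(\cdot)}\right)(t),
\end{equation*}
using $e^{I(\mathbf{q}-t)u}=e^{I\mathbf{q}u}e^{-Itu}$, valid because both exponents lie in the commutative plane $\mathbb{C}_{I}$. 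Write $\Phi(u):=\chi_{[-A,A]}(u)e^{I\mathbf{q}u}$; it is $\mathbb{C}_{I}$-valued, bounded and compactly supported, hence $\Phi\in L^{1}(\mathbb{R})\cap L^{2}(\mathbb{R})$ and $\mathscr{F}_{I}(\Phi)\in L^{2}(\mathbb{R})$.

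Next I would check that the right-hand side integral is absolutely convergent: by Cauchy--Schwarz,
\begin{equation*}
\int_{\mathbb{R}}\Big\lvert\frac{A}{\pi}\sinc\frac{A}{\pi}(\mathbf{q}-t)\Big\rvert\,\lvert f(t)\rvert\,\mathrm{d}t\leq\frac{1}{\sqrt{2\pi}}\lVert\mathscr{F}_{I}(\Phi)\rVert_{L^{2}(\mathbb{R})}\,\lVert f|_{\mathbb{R}}\rVert_{L^{2}(\mathbb{R})}<\infty,
\end{equation*}
so the integral defines a quaternion. To evaluate it I would fix $J\in\mathbb{S}$ with $J\perp I$ and decompose $f|_{\mathbb{R}}=G_{0}+G_{1}J$ with $G_{0},G_{1}:\mathbb{R}\to\mathbb{C}_{I}$; then $G_{0},G_{1}\in L^{2}(\mathbb{R})$ and $\mathscr{F}_{I}(f|_{\mathbb{R}})=\mathscr{F}_{I}(G_{0})+\mathscr{F}_{I}(G_{1})J$. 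Applying the multiplication formula for the one-dimensional quaternion Fourier transform (Lemma~\ref{FTLp}, with $p=2$) to the pair $(\Phi,G_{m})$, and using that $\mathscr{F}_{I}(\Phi)$, $\mathscr{F}_{I}(G_{m})$, $G_{m}$ and $e^{I\mathbf{q}t}$ all take values in the commutative plane $\mathbb{C}_{I}$, gives, for $m=0,1$,
\begin{align*}
\frac{A}{\pi}\int_{\mathbb{R}}\sinc\frac{A}{\pi}(\mathbf{q}-t)\,G_{m}(t)\,\mathrm{d}t&=\frac{1}{\sqrt{2\pi}}\int_{\mathbb{R}}\mathscr{F}_{I}(\Phi)(t)\,G_{m}(t)\,\mathrm{d}t\\
&=\frac{1}{\sqrt{2\pi}}\int_{\mathbb{R}}\Phi(t)\,\mathscr{F}_{I}(G_{m})(t)\,\mathrm{d}t=\frac{1}{\sqrt{2\pi}}\int_{-A}^{A}e^{I\mathbf{q}t}\,\mathscr{F}_{I}(G_{m})(t)\,\mathrm{d}t.
\end{align*}
Multiplying the $m=1$ identity on the right by $J$ and adding it to the $m=0$ identity then yields
\begin{equation*}
\frac{A}{\pi}\int_{\mathbb{R}}\sinc\frac{A}{\pi}(\mathbf{q}-t)\,f(t)\,\mathrm{d}t=\frac{1}{\sqrt{2\pi}}\int_{-A}^{A}e^{I\mathbf{q}t}\,\mathscr{F}_{I}(f|_{\mathbb{R}})(t)\,\mathrm{d}t.
\end{equation*}
Finally, since $f\in PW_{A}^{2}(\mathbb{H})$ we have $f\in\mathscr{R}_{l}(\mathbb{H})$, $f|_{\mathbb{R}}\in L^{2}(\mathbb{R})$ and $\operatorname{supp}\mathscr{F}_{I}(f|_{\mathbb{R}})\subset[-A,A]$, so the ``moreover'' part of Theorem~\ref{PW-QQ2} applies and gives $f(\mathbf{q})=f(x+Iy)=\frac{1}{\sqrt{2\pi}}\int_{-A}^{A}e^{I(x+Iy)t}\mathscr{F}_{I}(f|_{\mathbb{R}})(t)\,\mathrm{d}t$, which is exactly the quantity just computed; as $\mathbf{q}$ was arbitrary, this proves the reproducing kernel formula.

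The main obstacle is that $f|_{\mathbb{R}}$ is only assumed to lie in $L^{2}(\mathbb{R})$, not in $L^{1}(\mathbb{R})$, so one cannot simply expand $\sinc$ as an integral and interchange the two integrations by Fubini's theorem (the function $t\mapsto\sinc\frac{A}{\pi}(\mathbf{q}-t)$ is only $L^{2}$, not $L^{1}$); this is what forces the detour through the $L^{2}$ multiplication formula of Lemma~\ref{FTLp}. A secondary point, harmless but requiring care, is the order of quaternionic multiplication throughout the integrals: it causes no trouble here because, after passing to the plane $\mathbb{C}_{I_{\mathbf{q}}}$ and splitting $f|_{\mathbb{R}}$ into $\mathbb{C}_{I}$-valued components, every factor entering the relevant integrals lives in the single commutative field $\mathbb{C}_{I}$.
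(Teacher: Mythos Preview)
Your proof is correct and follows essentially the same route as the paper: both start from the integral representation of Theorem~\ref{PW-QQ2} and use the Fourier multiplication formula together with the computation $\mathscr{F}_{I}\bigl(\chi_{[-A,A]}(\cdot)e^{I\mathbf{q}(\cdot)}\bigr)(t)=\sqrt{2\pi}\,\frac{A}{\pi}\sinc\frac{A}{\pi}(\mathbf{q}-t)$ to pass between the Fourier integral and the sinc integral. Your version is somewhat more careful than the paper's one-line appeal to ``the multiplication formula'': you explicitly check absolute convergence and, since Lemma~\ref{FTLp} is stated only for $\mathbb{C}_I$-valued functions, you split $f|_{\mathbb{R}}=G_0+G_1J$ into $\mathbb{C}_I$-components before applying it, which is a legitimate and clean way to handle the noncommutativity.
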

	\begin{proof}
		By Theorem \ref{PW-QQ2} and the multiplication formula of Fourier transform, for every $\mathbf{q}=x+Jy\in\mathbb{H}$, we have
		\begin{align*}
			f\left(x+Jy\right)&=\frac{1}{\sqrt{2\pi}}\int_{-A}^{A}e^{J\left(x+Jy\right)t}\mathscr{F}_{J}\left(f|_{\mathbb{R}}\right)\left(t\right)\mathrm{d}t\\
			&=\frac{1}{\sqrt{2\pi}}\int_{-\infty}^{\infty}\chi_{\left[-A,A\right]}\left(t\right)e^{J\left(x+Jy\right)t}\mathscr{F}_{J}\left(f|_{\mathbb{R}}\right)\left(t\right)\mathrm{d}t\\
			&=\frac{1}{\sqrt{2\pi}}\int_{-\infty}^{\infty}\mathscr{F}_{J}\left(\chi_{\left[-A,A\right]}\left(\cdot\right)e^{J\left(x+Jy\right)\left(\cdot\right)}\right)\left(t\right)f\left(t\right)\mathrm{d}t\\
			&=\frac{A}{\pi}\int_{-\infty}^{\infty}\sinc\frac{A}{\pi}\left(\left(x+Jy\right)-t\right)f\left(t\right)\mathrm{d}t.
		\end{align*}
	\end{proof}
	\begin{remark}\label{sinc<c}
    Here, the sinc function in Definition \ref{def-sinc} has a property analogous to the complex sinc function, as shown below: For $M>0$, $p>1$ and $\frac{1}{p}+\frac{1}{p'}=1$, we have
		\begin{equation*}
			\sum_{k\in\mathbb{Z}}|\sinc\left(\mathbf{q}-k\right)|^p<p'e^{pM\pi}
		\end{equation*}
        for all $\mathbf{q}\in\Omega_{M}:=\left\{\mathbf{q}=x+Jy\in\mathbb{H}:x\in\mathbb{R},|y|<M\right\}.$ The proof is similar to that given in \cite[Lemma 2.2]{butzer2011sampling}. It is useful to prove Theorem \ref{Sampling theorem}.
	\end{remark}

	\begin{theorem}[Sampling Theorem]\label{Sampling theorem}
		For $f\in PW_{A}^2(\mathbb{H})$, we have
		\begin{equation*}
			f\left(\mathbf{q}\right)=\sum_{k\in\mathbb{Z}}\sinc\left(\frac{A\mathbf{q}}{\pi}-k\right)f\left(\frac{\pi k}{A}\right),\ \mathbf{q}\in\mathbb{H}.
		\end{equation*}
		Moverover, the convergence is absolute and uniform on $\Omega_{M}=\{\mathbf{q}=x+Jy\in\mathbb{H}:x\in\mathbb{R},|y|<M\}$ for $M<\infty$, and on compact sets of $\mathbb{H}$.
	\end{theorem}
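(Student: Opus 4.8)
The plan is to combine the inversion formula of Theorem~\ref{PW-QQ2} with a Fourier series expansion of $\mathscr{F}_J\!\left(f|_{\mathbb{R}}\right)$ on $[-A,A]$, whose coefficients turn out to be exactly the samples $f(\pi k/A)$. Fix $J\in\mathbb{S}$ and write $\mathbf{q}=x+Jy$. Since $f\in PW_A^2(\mathbb{H})$, Theorem~\ref{PW-QQ2} gives
\[
f(x+Jy)=\frac{1}{\sqrt{2\pi}}\int_{-A}^{A}e^{J(x+Jy)t}\mathscr{F}_J\!\left(f|_{\mathbb{R}}\right)(t)\,\mathrm{d}t,
\]
and with $y=0$, $x=\pi k/A$ this specializes to $f(\pi k/A)=\frac{1}{\sqrt{2\pi}}\int_{-A}^{A}e^{J\pi kt/A}\mathscr{F}_J\!\left(f|_{\mathbb{R}}\right)(t)\,\mathrm{d}t$. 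As $f|_{\mathbb{R}}\in L^2(\mathbb{R})$ and $\operatorname{supp}\mathscr{F}_J\!\left(f|_{\mathbb{R}}\right)\subset[-A,A]$, the function $\mathscr{F}_J\!\left(f|_{\mathbb{R}}\right)$ lies in $L^2\!\left([-A,A],\mathbb{H}\right)$, and I would expand it in the complete orthonormal system $\bigl\{\frac{1}{\sqrt{2A}}e^{-J\pi kt/A}\bigr\}_{k\in\mathbb{Z}}$ of that (right) quaternionic Hilbert space (completeness follows by splitting a value in $\mathbb{H}$ into two $\mathbb{C}_J$-components and invoking the classical complex Fourier series). Matching the Fourier coefficients against the displayed sample formula yields
\[
\mathscr{F}_J\!\left(f|_{\mathbb{R}}\right)(t)=\frac{1}{A}\sqrt{\frac{\pi}{2}}\sum_{k\in\mathbb{Z}}e^{-J\pi kt/A}f\!\left(\frac{\pi k}{A}\right),
\]
with convergence in $L^2([-A,A])$.

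Next I would substitute this series into the inversion formula and interchange the sum with the integral; this is legitimate because the partial sums converge in $L^2([-A,A])$, the multiplier $t\mapsto e^{J(x+Jy)t}$ is bounded on $[-A,A]$, and each constant $f(\pi k/A)$ factors out to the right of the integral. Since both exponents lie in the commutative plane $\mathbb{C}_J$, one has $e^{J(x+Jy)t}e^{-J\pi kt/A}=e^{J[(x-\pi k/A)+Jy]t}$, so after the change of variable $t=\tfrac{A}{\pi}s$ and an appeal to the integral representation $\sinc(x+Iy)=\tfrac{1}{2\pi}\int_{-\pi}^{\pi}e^{I(x+Iy)t}\,\mathrm{d}t$ (the Remark following Definition~\ref{def-sinc}) the inner integral equals $2A\,\sinc\!\left(\tfrac{A\mathbf{q}}{\pi}-k\right)$, because $\tfrac{A}{\pi}\bigl[(x-\pi k/A)+Jy\bigr]=\tfrac{A\mathbf{q}}{\pi}-k$. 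Collecting the constants ($\tfrac{1}{\sqrt{2\pi}}\cdot\tfrac{1}{A}\sqrt{\tfrac{\pi}{2}}\cdot 2A=1$) gives $f(\mathbf{q})=\sum_{k\in\mathbb{Z}}\sinc\!\left(\tfrac{A\mathbf{q}}{\pi}-k\right)f\!\left(\tfrac{\pi k}{A}\right)$ for every $\mathbf{q}=x+Jy$, i.e.\ for all $\mathbf{q}\in\mathbb{H}$.

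For the convergence assertions I would use Remark~\ref{sinc<c} together with Parseval's identity. Parseval applied to the expansion above gives $\sum_{k\in\mathbb{Z}}\lvert f(\pi k/A)\rvert^{2}=\lVert\mathscr{F}_J(f|_{\mathbb{R}})\rVert^{2}_{L^2([-A,A])}<\infty$. If $\mathbf{q}=x+Jy\in\Omega_M$, then $\tfrac{A\mathbf{q}}{\pi}\in\Omega_{AM/\pi}$, so Remark~\ref{sinc<c} with $p=p'=2$ yields $\sum_{k\in\mathbb{Z}}\bigl\lvert\sinc(\tfrac{A\mathbf{q}}{\pi}-k)\bigr\rvert^{2}<2e^{2AM}$ uniformly for $\mathbf{q}\in\Omega_M$; combining this with $\sum_{\lvert k\rvert>N}\lvert f(\pi k/A)\rvert^{2}\to0$ through the Cauchy--Schwarz inequality (and $\lvert ab\rvert=\lvert a\rvert\lvert b\rvert$ in $\mathbb{H}$) makes the tails of the sampling series vanish uniformly on $\Omega_M$, so the convergence is absolute and uniform on each $\Omega_M$, and hence on every compact subset of $\mathbb{H}$ (each lying in some $\Omega_M$). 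The step demanding most care is the noncommutative bookkeeping: one must keep the sample values on the correct (right) side of the exponentials so they match the inversion formula, and justify the sum--integral exchange. Alternatively the whole argument can be organised to reduce to the classical Whittaker--Kotelnikov--Shannon theorem: write $f=\sum_{m=0}^{3}h_m b_m$ with $h_m\in\mathscr{N}_l(\mathbb{H})\cap PW_A^2(\mathbb{H})$ via Proposition~\ref{prop-NE} (as in the proof of Theorem~\ref{PW-QQ2}), apply the classical sampling theorem to each real-valued band-limited restriction $h_m|_{\mathbb{C}_I}$, $I\in\mathbb{S}$, and reassemble, using that the real samples $h_m(\pi k/A)$ are central so that $\sum_{m}\sinc(\tfrac{A\mathbf{q}}{\pi}-k)h_m(\tfrac{\pi k}{A})b_m=\sinc(\tfrac{A\mathbf{q}}{\pi}-k)f(\tfrac{\pi k}{A})$.
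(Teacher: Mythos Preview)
Your argument is correct and in spirit the same as the paper's: both combine the inversion formula of Theorem~\ref{PW-QQ2} with a Fourier series on $[-A,A]$ and then use Parseval together with Remark~\ref{sinc<c} and Cauchy--Schwarz for the uniform absolute convergence. The only real difference is \emph{which} function is expanded: the paper expands the kernel $t\mapsto e^{J(x+Jy)t}$ into its Fourier series on $[-A,A]$ (whose coefficients are the sinc values) and then integrates term by term against $\mathscr{F}_J(f|_{\mathbb{R}})$, whereas you expand $\mathscr{F}_J(f|_{\mathbb{R}})$ itself (whose coefficients are the samples $f(\pi k/A)$) and integrate term by term against the kernel. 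These are dual routes to the same identity; the paper's version lets one invoke the uniform convergence of the Fourier series of the smooth kernel, while your version relies on $L^2$ convergence of the partial sums together with boundedness of the kernel on $[-A,A]$, which is equally valid. Your remarks about keeping the quaternionic samples on the right and pulling them through the integral are exactly the care that is needed in the noncommutative setting, and your alternative outline via Proposition~\ref{prop-NE} is also a legitimate (if slightly longer) path.
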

	\begin{proof}
		For any fixed $\mathbf{q}=x+Jy\in\mathbb{H}$, expanding the function $e^{J\left(x+Jy\right)t}$ into its uniformly convergent Fourier series in $\left[-A,A\right]$, we have 
		\begin{equation}\label{e^J}
			e^{J\left(x+Jy\right)t}=\sum_{k\in\mathbb{Z}}\sinc\left(\frac{A\left(x+Jy\right)}{\pi}-k\right)e^{J\frac{\pi k}{A}t},  
		\end{equation}
		
		where the terms $\sinc\left(\frac{A\left(x+Jy\right)}{\pi}-k\right)$ are the Fourier coefficients. By Theorem \ref{PW-QQ2}, we have
		\begin{align*}
			f\left(x+Jy\right)&=\frac{1}{\sqrt{2\pi}}\int_{-A}^{A}e^{J\left(x+Jy\right)t}\mathscr{F}_{J}\left(f|_{\mathbb{R}}\right)\left(t\right)\mathrm{d}t\\
			&=\frac{1}{\sqrt{2\pi}}\int_{-A}^{A}\sum_{k\in\mathbb{Z}}\sinc\left(\frac{A\left(x+Jy\right)}{\pi}-k\right)e^{J\frac{\pi k}{A}t}\mathscr{F}_{J}\left(f|_{\mathbb{R}}\right)\left(t\right)\mathrm{d}t\\
			&=\frac{1}{\sqrt{2\pi}}\sum_{k\in\mathbb{Z}}\sinc\left(\frac{A\left(x+Jy\right)}{\pi}-k\right)\int_{-A}^{A}e^{J\frac{\pi k}{A}t}\mathscr{F}_{J}\left(f|_{\mathbb{R}}\right)\left(t\right)\mathrm{d}t\\
			&=\sum_{k\in\mathbb{Z}}\sinc\left(\frac{A\left(x+Jy\right)}{\pi}-k\right)f\left(\frac{\pi k}{A}\right).
		\end{align*}
		The exchange of the summation and the integration is justified by the uniform convergence of \eqref{e^J}. By the Cauchy-Schwarz inequality, we have
		\begin{align}\label{CS}
			&\sum_{k\in\mathbb{Z}}\Big|\sinc\left(\frac{A\left(x+Jy\right)}{\pi}-k\right)f\left(\frac{\pi k}{A}\right)\Big|\notag\\
			\leq&\left(\sum_{k\in\mathbb{Z}}\Big|\sinc\left(\frac{A\left(x+Jy\right)}{\pi}-k\right)\Big|^2\right)^{\frac{1}{2}}\left(\sum_{k\in\mathbb{Z}}\Big|f\left(\frac{\pi k}{A}\right)\Big|^2\right)^{\frac{1}{2}}.
		\end{align}
		Parseval's identity gives
		\begin{equation*}
			\int_{-A}^{A}|\mathscr{F}_{J}\left(f|_{\mathbb{R}}\right)\left(t\right)|^{2}\mathrm{d}t=2A\sum_{k\in\mathbb{Z}}|c_k|^2=\frac{\pi}{A}\sum_{k\in\mathbb{Z}}\Big|f\left(\frac{\pi k}{A}\right)\Big|^2,
		\end{equation*}
		where $$c_k=\frac{1}{2A}\int_{-A}^{A}e^{-J\frac{\pi k}{A}x}\mathscr{F}_{J}\left(f|_{\mathbb{R}}\right)\left(x\right)\mathrm{d}x.$$ 
		Using the Plancherel theorem 
		and $\operatorname{supp}\mathscr{F}_{J}\subset\left[-A,A\right]$, we have
		\begin{equation*}
			\int_{-A}^{A}|\mathscr{F}_{J}\left(f|_{\mathbb{R}}\right)\left(t\right)|^{2}\mathrm{d}t=\int_{\mathbb{R}}|\mathscr{F}_{J}\left(f|_{\mathbb{R}}\right)\left(t\right)|^{2}\mathrm{d}t=\int_{\mathbb{R}}|f\left(x\right)|^2\mathrm{d}x.
		\end{equation*}
		It follows that \begin{equation}\label{fFS}
			\frac{\pi}{A}\sum_{k\in\mathbb{Z}}\Big|f\left(\frac{\pi k}{A}\right)\Big|^2=\int_{\mathbb{R}}|f\left(x\right)|^2\mathrm{d}x<\infty.
		\end{equation}
		
		Combining \eqref{CS}, \eqref{fFS} and Remark \ref{sinc<c}, we conclude that the sampling series is absolutely and uniformly convergent on $\Omega_{M}=\left\{\mathbf{q}=x+Jy\in\mathbb{H}:x\in\mathbb{R},|y|<M\right\}$ for $M<\infty$.
	\end{proof}
	\begin{prop}
		For $f\in PW_{A}^2(\mathbb{H})$, we have, for every  $J\in\mathbb{S}$ and all $x,y\in\mathbb{R}$,
		\begin{equation*}
			f\left(x+Jy\right)=\sum_{k\in\mathbb{Z}}\sinc\left(\frac{A(x+Jy)}{\pi}-k\right)f\left(\frac{\pi k}{A}\right)
		\end{equation*}
		converges in the $L^2(\mathbb{R})$-norm.
	\end{prop}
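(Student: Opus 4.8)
The plan is to reduce the statement, one slice at a time, to the classical $L^{2}$ Whittaker--Kotelnikov--Shannon theorem for the complex Paley--Wiener space. Fix $J\in\mathbb{S}$ and $y\in\mathbb{R}$; I will show that the partial sums of $\sum_{k\in\mathbb{Z}}\sinc\bigl(\tfrac{A(x+Jy)}{\pi}-k\bigr)f\bigl(\tfrac{\pi k}{A}\bigr)$ converge in $L^{2}(\mathbb{R};\mathrm{d}x)$ to $x\mapsto f(x+Jy)$. First, pick $I\in\mathbb{S}$ with $I\perp J$, so that $\{1,I,J,IJ\}$ is an orthonormal basis of $\mathbb{H}$, and use Proposition~\ref{prop-NE} to write $f=h_{0}+h_{1}I+h_{2}J+h_{3}(IJ)$ with $h_{m}\in\mathscr{N}_{l}(\mathbb{H})$. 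Since $f\in PW_{A}^{2}(\mathbb{H})$ satisfies condition (ii) of Theorem~\ref{PW-QQ2}, it also satisfies (i), so $|f(\mathbf{q})|\le Ce^{A|\mathbf{q}|}$; the same estimate for the components $h_{m}$ follows exactly as in the proof of Theorem~\ref{PW-QQ2} (via the bounds on $\alpha$ and $\beta$ there), and $h_{m}|_{\mathbb{R}}\in L^{2}(\mathbb{R})$ because the real-valued traces $h_{m}|_{\mathbb{R}}$ are the coordinates of $f|_{\mathbb{R}}$ in the orthonormal basis, whence $\sum_{m}\lVert h_{m}|_{\mathbb{R}}\rVert_{L^{2}}^{2}=\lVert f|_{\mathbb{R}}\rVert_{L^{2}}^{2}<\infty$. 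Consequently each $h_{m}|_{\mathbb{C}_{J}}$ is holomorphic and $\mathbb{C}_{J}$-valued, of exponential type $\le A$, with square-integrable trace on $\mathbb{R}$; identifying $\mathbb{C}_{J}\cong\mathbb{C}$ it is a classical Paley--Wiener function, and Lemma~\ref{PW-CICI} (with its $I$ taken to be $J$) gives $h_{m}(x+Jy)=\tfrac{1}{\sqrt{2\pi}}\int_{-A}^{A}e^{J(x+Jy)t}\mathscr{F}_{J}(h_{m}|_{\mathbb{R}})(t)\,\mathrm{d}t$.

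Next, apply the classical $L^{2}$ sampling theorem to each $h_{m}|_{\mathbb{C}_{J}}$: the cardinal functions $\{x\mapsto\sinc(\tfrac{Ax}{\pi}-k)\}_{k\in\mathbb{Z}}$ form an orthogonal basis of the Paley--Wiener space on $\mathbb{C}_{J}$, and the orthogonal expansion of $h_{m}|_{\mathbb{R}}$ in it is precisely $\sum_{k}\sinc(\tfrac{Ax}{\pi}-k)h_{m}(\tfrac{\pi k}{A})$, convergent in $L^{2}(\mathbb{R};\mathrm{d}x)$. To transport this expansion from the real axis to the parallel line $\{x+Jy:x\in\mathbb{R}\}$, I would use that the ``imaginary-translation'' operator $T_{y}$, sending a Paley--Wiener function $g$ on $\mathbb{C}_{J}$ to $x\mapsto g(x+Jy)$, maps $PW_{A}^{2}(\mathbb{C}_{J})$ boundedly (and right-$\mathbb{C}_{J}$-linearly) into $L^{2}(\mathbb{R};\mathrm{d}x)$: by the integral representation above and Plancherel's theorem, $\lVert T_{y}g\rVert_{L^{2}(\mathbb{R})}=\lVert e^{-y\,\cdot}\mathscr{F}_{J}(g|_{\mathbb{R}})\chi_{[-A,A]}\rVert_{L^{2}}\le e^{A|y|}\lVert g|_{\mathbb{R}}\rVert_{L^{2}(\mathbb{R})}$, while $T_{y}$ sends $x\mapsto\sinc(\tfrac{Ax}{\pi}-k)$ to $x\mapsto\sinc(\tfrac{A(x+Jy)}{\pi}-k)$. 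Applying $T_{y}$ term by term to the real-axis cardinal series of $h_{m}$ then yields $h_{m}(x+Jy)=\sum_{k}\sinc(\tfrac{A(x+Jy)}{\pi}-k)h_{m}(\tfrac{\pi k}{A})$ with convergence in $L^{2}(\mathbb{R};\mathrm{d}x)$.

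Finally, reassemble: writing $\mathbf{u}_{0}=1,\ \mathbf{u}_{1}=I,\ \mathbf{u}_{2}=J,\ \mathbf{u}_{3}=IJ$, we have $f(x+Jy)=\sum_{m=0}^{3}h_{m}(x+Jy)\mathbf{u}_{m}$. Right multiplication by a unit quaternion is an isometry of $L^{2}(\mathbb{R};\mathbb{H})$, and both $\sinc(\tfrac{A(x+Jy)}{\pi}-k)$ and $h_{m}(\tfrac{\pi k}{A})$ lie in $\mathbb{C}_{J}$, so multiplying the $m$-th series on the right by $\mathbf{u}_{m}$ preserves $L^{2}$-convergence and, by associativity, gives $h_{m}(\cdot+Jy)\mathbf{u}_{m}=\sum_{k}\sinc(\tfrac{A(\cdot+Jy)}{\pi}-k)h_{m}(\tfrac{\pi k}{A})\mathbf{u}_{m}$ in $L^{2}(\mathbb{R};\mathrm{d}x)$. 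Summing the four series and interchanging the finite sum over $m$ with the series in $k$ collects the coefficients into $\sum_{m}h_{m}(\tfrac{\pi k}{A})\mathbf{u}_{m}=f(\tfrac{\pi k}{A})$, which gives $f(x+Jy)=\sum_{k}\sinc(\tfrac{A(x+Jy)}{\pi}-k)f(\tfrac{\pi k}{A})$ in $L^{2}(\mathbb{R};\mathrm{d}x)$, as claimed; in particular $x\mapsto f(x+Jy)$ lies in $L^{2}(\mathbb{R};\mathrm{d}x)$.

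I expect the only points that need care to be (a) the transfer of $L^{2}$-convergence from the real axis to the parallel line $\{x+Jy\}$ in $\mathbb{C}_{J}$, which rests on the boundedness of $T_{y}$ rather than on any pointwise estimate for $\sinc$, and (b) the bookkeeping of non-commutativity, since in the sampling series the scalar $\sinc$-factor acts on the left while the basis unit $\mathbf{u}_{m}$ sits on the right, and these distribute over one another only because the remaining factors all lie in the commutative plane $\mathbb{C}_{J}$; neither is substantial, the genuine analytic content being the classical $L^{2}$ sampling theorem together with the results of the preceding sections. A self-contained alternative would be to run the Fourier-series argument directly in the quaternionic Hilbert module $L^{2}((-A,A);\mathbb{H})$ with orthogonal system $\{e^{\pm J\pi kt/A}\}_{k\in\mathbb{Z}}$, whose coefficients for $\mathscr{F}_{J}(f|_{\mathbb{R}})$ are, by Theorem~\ref{PW-QQ2}, scalar multiples of the samples $f(\tfrac{\pi k}{A})$; but that would require setting up module Fourier theory with right coefficients throughout, so the reduction to the complex case via Proposition~\ref{prop-NE} is cleaner.
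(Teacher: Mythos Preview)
Your argument is correct and, in fact, establishes more than the paper does. The paper's own proof treats only the real-axis case: it rewrites
\[
\Bigl\Vert f(t)-\sum_{|k|\le n}\sinc\Bigl(\tfrac{At}{\pi}-k\Bigr)f\Bigl(\tfrac{\pi k}{A}\Bigr)\Bigr\Vert_{L^2(\mathbb{R})}
\]
via Plancherel as
\(
\bigl\Vert \mathscr{F}_{J}(f|_{\mathbb{R}})-\tfrac{\sqrt{2\pi}}{2A}\sum_{|k|\le n}e^{J\pi kt/A}f(-\tfrac{\pi k}{A})\bigr\Vert_{L^2(-A,A)},
\)
applies the $\mathscr{N}_{l}$-decomposition \emph{after} this reduction, and then recognises the resulting expressions for the components $f_{m}$ as partial sums of the Fourier series of $\mathscr{F}_{J}(f_{m}|_{\mathbb{R}})\in L^{2}(-A,A)$, whence the $L^{2}$-convergence. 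So the paper's route is a self-contained Fourier-series argument, but only on $\mathbb{R}$ (i.e.\ $y=0$).

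Your route differs in two respects. First, you apply the $\mathscr{N}_{l}$-decomposition at the outset and invoke the classical complex $L^{2}$ sampling theorem as a black box for each $h_{m}|_{\mathbb{C}_{J}}$, rather than re-deriving it via Plancherel and Fourier series as the paper does. Second, and more substantially, you introduce the bounded ``imaginary-translation'' operator $T_{y}$ on $PW_{A}^{2}(\mathbb{C}_{J})$ to transport the $L^{2}(\mathbb{R})$-convergence from the real axis to every parallel line $\{x+Jy:x\in\mathbb{R}\}$; the paper does not do this and so only obtains the $y=0$ statement. What the paper's approach buys is self-containment (no appeal to the classical sampling theorem); what yours buys is the stronger conclusion for all $y$, at the cost of quoting the classical result. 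The non-commutativity bookkeeping you flag in (b) is indeed harmless here, since the samples $h_{m}(\tfrac{\pi k}{A})$ are in fact real (as $h_{m}\in\mathscr{N}_{l}(\mathbb{H})$), which makes the reassembly even simpler than you indicate.
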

	\begin{proof}
		For $x\in\mathbb{R}$, we have
		\begin{align*}
			\mathscr{F}_{J}\left(\chi_{[-A,A]}\left(t\right)e^{Jxt}\right)\left(\xi\right)=\frac{1}{\sqrt{2\pi}}\int_{-A}^{A}e^{-J\xi t}e^{Jxt}\mathrm{d}t=\frac{2A}{\sqrt{2\pi}}\sinc\left(\frac{A}{\pi}\left(\xi-x\right)\right),
		\end{align*}
		where $\chi_{[-A,A]}$ is the characteristic function on $[-A,A]$.
		By $$\mathscr{F}_{J}\left(\chi_{[-A,A]}\left(t\right)e^{Jxt}\right)\left(\xi\right)=\mathscr{F}_{J}^{-1}\left(\chi_{[-A,A]}\left(t\right)e^{-Jxt}\right)\left(\xi\right)$$ and $$\mathscr{F}_{J}\left(\mathscr{F}_{J}^{-1}\left(F\right)\right)=F,$$
		we have $\mathscr{F}_{J}\left(\sinc\left(\frac{A}{\pi}\left(\cdot-x\right)\right)\right)\left(t\right)=\frac{\sqrt{2\pi}}{2A}\chi_{[-A,A]}\left(t\right)e^{-Jxt}$.
		
		For $f\in PW_{A}^{2}(\mathbb{H})$, using the Plancherel theorem and Theorem \ref{PW-QQ2}, we have
		\begin{align}\label{L2norm}
			&\Big\Vert f\left(t\right)-\sum_{|k|\leq n}\sinc\left(\frac{At}{\pi}-k\right)f\left(\frac{\pi k}{A}\right)\Big\Vert_{L^2\left(\mathbb{R}\right)}\notag\\
			=&\Big\Vert \mathscr{F}_{J}\left(f|_{\mathbb{R}}\right)\left(t\right)-\left\{\frac{\sqrt{2\pi}}{2A}\sum_{|k|\leq n}e^{-J\frac{\pi k}{A}t}f\left(\frac{\pi k}{A}\right)\right\}\chi_{[-A,A]}\left(t\right)\Big\Vert_{L^2\left(\mathbb{R}\right)}\notag\\
			=&\Big\Vert \mathscr{F}_{J}\left(f|_{\mathbb{R}}\right)\left(t\right)-\frac{\sqrt{2\pi}}{2A}\sum_{|k|\leq n}e^{-J\frac{\pi k}{A}t}f\left(\frac{\pi k}{A}\right)\Big\Vert_{L^2\left(-A,A\right)}\notag\\
			=&\Big\Vert \mathscr{F}_{J}\left(f|_{\mathbb{R}}\right)\left(t\right)-\frac{\sqrt{2\pi}}{2A}\sum_{|k|\leq n}e^{J\frac{\pi k}{A}t}f\left(-\frac{\pi k}{A}\right)\Big\Vert_{L^2\left(-A,A\right)}.
		\end{align}
        Using Proposition \ref{prop-NE}, we have $f=f_{0}+f_{1}\tilde{I}+f_{2}\tilde{J}+f_{3}\tilde{I}\tilde{J}$ where $f_{m}\in\mathscr{N}_{l},m=0,1,2,3$ and $\tilde{I},\tilde{J}$ are mutually orthogonal quaternionic imaginary units. It follows that 
       \begin{equation*}
           \begin{split}
               &\Big\Vert \mathscr{F}_{J}\left(f|_{\mathbb{R}}\right)\left(t\right)-\frac{\sqrt{2\pi}}{2A}\sum_{|k|\leq n}e^{J\frac{\pi k}{A}t}f\left(-\frac{\pi k}{A}\right)\Big\Vert_{L^2\left(-A,A\right)}\\
               =&\Big\Vert\mathscr{F}_{J}(f_{0}|_{\mathbb{R}})(t)-\frac{\sqrt{2\pi}}{2A}\sum_{|k|\leq n}e^{J\frac{\pi k}{A}t}f_{0}\left(-\frac{\pi k}{A}\right)\\
               &+\mathscr{F}_{J}(f_{1}|_{\mathbb{R}})(t)\tilde{I}-\frac{\sqrt{2\pi}}{2A}\sum_{|k|\leq n}e^{J\frac{\pi k}{A}t}f_{1}\left(-\frac{\pi k}{A}\right)\tilde{I}\\
               &+\mathscr{F}_{J}(f_{2}|_{\mathbb{R}})(t)\tilde{I}-\frac{\sqrt{2\pi}}{2A}\sum_{|k|\leq n}e^{J\frac{\pi k}{A}t}f_{2}\left(-\frac{\pi k}{A}\right)\tilde{J}\\
               &+\mathscr{F}_{J}(f_{3}|_{\mathbb{R}})(t)\tilde{I}\tilde{J}-\frac{\sqrt{2\pi}}{2A}\sum_{|k|\leq n}e^{J\frac{\pi k}{A}t}f_{3}\left(-\frac{\pi k}{A}\right)\tilde{I}\tilde{J}\Big\Vert_{L^2\left(-A,A\right)}\\
               \leq&\sum_{m=0}^{3}\Big\Vert \mathscr{F}_{J}\left(f_{m}|_{\mathbb{R}}\right)\left(t\right)-\frac{\sqrt{2\pi}}{2A}\sum_{|k|\leq n}e^{J\frac{\pi k}{A}t}f_{m}\left(-\frac{\pi k}{A}\right)\Big\Vert_{L^2\left(-A,A\right)}.
           \end{split}
       \end{equation*}
        Since $f\in PW_{A}^{2}(\mathbb{H})$, we have $f_{m}\in PW_{A}(\mathbb{H})$. Note that
       \begin{equation*}
			\frac{\sqrt{2\pi}}{2A}f_{m}\left(-\frac{\pi k}{A}\right)=\frac{1}{2A}\int_{-A}^{A}e^{-J\frac{\pi k}{A}t}\mathscr{F}_{J}\left(f_{m}|_{\mathbb{R}}\right)\left(t\right)\mathrm{d}t.
		\end{equation*}
       Thus, $\frac{\sqrt{2\pi}}{2A}\sum_{k\in\mathbb{Z}}e^{J\frac{\pi k}{A}t}f_{m}\left(-\frac{\pi k}{A}\right)$ is the Fourier series of $\mathscr{F}_{J}\left(f_{m}|_{\mathbb{R}}\right)\in L^{2}\left(-A,A\right)$. It is easy to see that 
       \[\lim_{n\to\infty}\Big\Vert \mathscr{F}_{J}\left(f_{m}|_{\mathbb{R}}\right)\left(t\right)-\frac{\sqrt{2\pi}}{2A}\sum_{|k|\leq n}e^{J\frac{\pi k}{A}t}f_{m}\left(-\frac{\pi k}{A}\right)\Big\Vert_{L^2\left(-A,A\right)}=0.\]
        Hence  the sampling series converges to $f$ in the $L^2(\mathbb{R})$-norm.
	\end{proof}
    
	\section*{Acknowledgements.}
    We would like to express our deep gratitude to the referee for his/her thorough reading, useful comments and suggestions, which greatly helps to improve the presentation and quality of the paper.
    
	W. X. Mai was supported by 
	the Science and Technology Development Fund, Macau SAR (No. 0133/2022/A, 0022/2023/ITP1).
	P. Dang was supported by FRG Program of the Macau University of Science
	and Technology, No. FRG-23-033-FIE, and the Science and Technology Development Fund, Macau SAR (No. 0030/2023/ITP1).

\end{document}